\colorlet{shadecolor}{gray!40}
\DeclareSymbolFontAlphabet{\mathbb}{AMSb}
\DeclareSymbolFontAlphabet{\mathbbl}{bbold}
\DeclareFontFamily{U}{mathx}{\hyphenchar\font45}
\DeclareFontShape{U}{mathx}{m}{n}{
      <5> <6> <7> <8> <9> <10>
      <10.95> <12> <14.4> <17.28> <20.74> <24.88>
      mathx10
      }{}
\DeclareSymbolFont{mathx}{U}{mathx}{m}{n}
\DeclareMathAccent{\widecheck}{0}{mathx}{"71}
\DeclareMathAccent{\wideparen}{0}{mathx}{"75}
\theoremstyle{plain}
\newtheorem{theorem}{Theorem}[section]
\newtheorem{lemma}[theorem]{Lemma}
\newtheorem{definition}[theorem]{Definition}
\newtheorem{question}[theorem]{Question}
\theoremstyle{remark}
\newtheorem{remark}[theorem]{Remark}
\newcommand{\Cc}{\mathbb{C}}
\newcommand{\Ee}{\mathbb{E}}
\newcommand{\Nn}{\mathbb{N}}
\newcommand{\Pp}{\mathbb{P}}
\newcommand{\Rr}{\mathbb{R}}
\newcommand{\Tt}{\mathbb{T}}
\newcommand{\Uu}{\mathbb{U}}
\newcommand{\Yy}{\mathbb{Y}}
\newcommand{\Zz}{\mathbb{Z}}
\newcommand{\Un}{\mathds{1}}
\def\ee{ { \mathbbm{e} } }
\newcommand{\Ae}{\mathcal{A}}
\newcommand{\Be}{\mathcal{B}}
\newcommand{\Ce}{\mathcal{C}}
\newcommand{\De}{\mathcal{D}}
\newcommand{\Eee}{\mathcal{E}}
\newcommand{\Fe}{\mathcal{F}}
\newcommand{\He}{\mathcal{H}}
\newcommand{\Ie}{\mathcal{I}}
\newcommand{\Je}{\mathcal{J}}
\newcommand{\Ke}{\mathcal{K}}
\newcommand{\Le}{\mathcal{L}}
\newcommand{\Me}{\mathcal{M}}
\newcommand{\Qe}{\mathcal{Q}}
\newcommand{\Ree}{\mathcal{R}}
\newcommand{\Ue}{\mathcal{U}}
\newcommand{\We}{\mathcal{W}}
\newcommand{\Ze}{\mathcal{Z}}
\newcommand{\Heb}{\boldsymbol{\mathcal{H}}}
\newcommand{\Keb}{\boldsymbol{\mathcal{K}}}
\newcommand{\Reb}{\boldsymbol{\mathcal{R}}}
\newcommand{\Ns}{\mathscr{N}}
\newcommand{\Mg}{{\mathfrak{M}}}
\newcommand{\Sg}{{\mathfrak{S}}}
\newcommand{\Xg}{{\mathfrak{X}}}
\newcommand{\gammab}{\boldsymbol{\gamma}}
\newcommand{\lambdab}{\boldsymbol{\lambda}}
\newcommand{\varphib}{\boldsymbol{\varphi}}
\newcommand{\thetab}{\boldsymbol{\theta}}
\newcommand{\taub}{\boldsymbol{\tau}}
\newcommand{\psib}{\boldsymbol{\psi}}
\newcommand{\sigmab}{\boldsymbol{\sigma}}
\newcommand{\Gammab}{\boldsymbol{\Gamma}}
\newcommand{\Lambdab}{\boldsymbol{\Lambda}}
\newcommand{\Thetab}{\boldsymbol{\Theta}}
\newcommand{\Ab}{{\boldsymbol{A}}}
\newcommand{\Bd}{{\boldsymbol{B}}}
\newcommand{\Gb}{{\boldsymbol{G}}}
\newcommand{\Hb}{{\boldsymbol{H}}}
\newcommand{\Jb}{{\boldsymbol{J}}} 
\newcommand{\Kb}{{\boldsymbol{K}}}
\newcommand{\Lb}{{\boldsymbol{L}}} 
\newcommand{\Mb}{{\boldsymbol{M}}}
\newcommand{\Tb}{{\boldsymbol{T}}}
\newcommand{\Xb}{{\boldsymbol{X}}}
\newcommand{\Zb}{{\boldsymbol{Z}}}
\newcommand{\pb}{{\boldsymbol{p}}}
\newcommand{\qb}{{\boldsymbol{q}}}
\newcommand{\tb}{{\boldsymbol{t}}}
\newcommand{\ub}{{\boldsymbol{u}}}
\newcommand{\xb}{{\boldsymbol{x}}}
\newcommand{\ensemble}[1]{ \left\lbrace #1 \right\rbrace } 
\newcommand{\prth}[1]{{\left( #1 \right)}}
\newcommand{\crochet}[1]{{\left[ #1 \right]}}
\newcommand{\abs}[1]{{\left| #1 \right|}}  
\newcommand{\norm}[1]{\left| \! \left| #1 \right| \! \right|}
\newcommand{\Esp}[1]{ \Ee{\prth{ #1 }} }  
\newcommand{\Prob}[1]{ \Pp \prth{ #1 } }  
\newcommand{\Espr}[2]{ \Ee_{#1} \prth{ #2 } } 
\newcommand{\Proba}[2]{ \Pp_{#1} \prth{ #2 } } 
\def\inv{^{-1}} 
\def\longlongrightarrow{\hspace{+0.1ex} - \hspace{-1.1ex} - \hspace{-1.1ex} - \hspace{-1.1ex}\longrightarrow  } 
\newcommand{\tendvers}[2]{ \underset{#1 \rightarrow #2}{\longlongrightarrow} }  
\newcommand{\cvlaw}[2]{\stackrel{\Le}{\underset{#1 \, \rightarrow \, #2}{\longlongrightarrow}}}  
\newcommand{\equivalent}[1]{ {\underset{#1 }{\sim} } }  
\newcommand{\bracket}[1]{\left\langle #1 \right\rangle}
\newcommand{\Unens}[1]{ \Un_{ \ensemble{#1} } }
\def\eqlaw{\stackrel{\Le}{=}}
\newcommand{\pe}[1]{{\left[ #1 \right]}}
\newcommand{\tr}[1]{\operatorname{tr}\prth{ #1 } }
\newcommand{\ActsOn}{\curvearrowright}
\def\trace{ \operatorname{tr} }
\def\Mat{ \operatorname{Mat} }
\def\im{{\operatorname{Im}}}
\def\DetMeas{ \operatorname{DetMeas} }
\def\PPD{ \operatorname{DPP} }
\def\soft{{ \operatorname{soft} }}
\def\gue{{ GU\!E }}
\def\PoPl{ \operatorname{PoPl} }
\def\Schur{ \operatorname{Schur} }
\def\DPP{ \operatorname{DPP} }
\def\KPZ{ {K\!P\!Z} }
\def\GinE{ \operatorname{GinE} }
\def\Exp{ \operatorname{Exp} }
\def\Geom{ \operatorname{Geom} }
\def\Poisson{ \operatorname{Po} }
\def\Gumbel{ \operatorname{Gb} }
\def\TW{\mathcal{T}\mathcal{W}}
\def\Ai{{\operatorname{Ai}}}
\def\Bi{{\operatorname{Bi}}}
\def\sinc{{\operatorname{sinc}}}
\def\id{{\operatorname{id}}}
\def\PHO{ \operatorname{PHO} }
\def\geq{\geqslant}
\def\leq{\leqslant}
\let\oldforall\forall
\def\forall{\oldforall\,} 
\let\oldexists\exists
\def\exists{\oldexists\,}
\newcommand{\emailhref}[1]{ \email{\href{mailto:#1}{#1}} }
\definecolor{rougeclair}{rgb}{1,.65,.65}
\definecolor{pink}{RGB}{219, 48, 122}
\definecolor{purple}{RGB}{128, 0, 128}
\definecolor{vertcac}{RGB}{32, 114, 18}
\definecolor{ocre}{RGB}{120, 30, 40}
\definecolor{rose}{RGB}{250, 0, 70}
\definecolor{white}{RGB}{255, 255, 255}
\newcommand{\blue}[1]{{\color{black}#1\color{black}}}
\newcommand{\purple}[1]{{\color{purple}#1\color{black}}} 
\newcommand{\pink}[1]{{\color{pink}#1\color{black}}}
\newcommand{\red}[1]{{\color{red}#1\color{black}}}
\def\www{\tau}
\def\AOP{Ann. Probab. }
\def\CMP{Commun. Math. Phys. }
\def\CPAM{Comm. Pure Appl. Math. }
\def\IMRN{Int. Math. Res. Not. }
\def\JAMS{J Amer. Math. Soc. }
\def\PTRF{Probab. Theory Related Fields }
\title[Max-independence structures in integrable probability]{Remarkable structures in integrable probability, I: max-independence structures} 
\author[Y. Barhoumi-Andr\'eani]{Yacine Barhoumi-Andr\'eani}
\address{Department of Algebra and Logic, 
Institute of Mathematics and Informatics, 
Bulgarian Academy of Sciences, 
Acad. Georgi Bonchev Str., Block 8, 1113 Sofia (Bulgaria).}
\date{\today}
\subjclass[2010]{60L70, 60F05, 15B52, 05E05, 47B35, 33E10, 33E17, 33E30}
\begin{document}
\begin{abstract}

We analyse in a systematic way the occurrences of a remarkable structure in the theory of integrable probability that we call a ``max-independence structure'', when random variables are constructed as a maximum of a sequence of independent random variables. 

The list of treated examples contains~: the $ GU\!E $ and $ GO\!E $ Tracy-Widom distributions, the extreme eigenvalues/eigenangles of random hermitian/unitary matrices (and in particular the historical example of the $ GU\!E $ extreme eigenvalues), the Hopf-Cole solution to the KPZ equation with Dirac initial condition (continuum random polymer) and the symmetric Schur measure. In this last case, the largest part of the underlying random partition is the maximum of an i.i.d randomisation of the deterministic sequence of negative integers. In the case of the $ GU\!E $ Tracy-Widom distribution, the random variables use a rescaling of the prolate hyperspheroidal wave functions that were introduced by Heurtley and Slepian in the context of circular optical mirrors.

To illustrate the utility of such a structure, we rescale the largest eigenvalue of the $ GU\!E $ written as a maximum of $N$ independent random variables with the classical Poisson approximation for sums of indicators. We use for this the Okamoto-Noumi-Yamada theory of the sigma-form of the Painlev\'e equation applied to random matrix theory by Forrester-Witte. By doing so, we find a new expression for the cumulative distribution function of the $ GU\!E $ Tracy-Widom distribution which is shown to be equivalent to the classical one using manipulations \`a la Forrester-Witte.
\end{abstract}

\maketitle

\setcounter{tocdepth}{1}
\tableofcontents 


%
%

\newpage
\section{Introduction}\label{Sec:Intro}

\subsection{Motivations}\label{SubSec:Intro:Motivations}

The classical Tracy-Widom distributions are indexed by a parameter $ \beta \in \ensemble{1, 2, 4} $. Of particular interest is the case $ \beta = 2 $ ($ \gue $ Tracy-Widom distribution) that initially appeared in the fluctuations of the largest eigenvalue of a $ \gue $ random matrix \cite{TracyWidomPLB, TracyWidomCMP} and that meanwhile exhibited a universal feature in a variety of models: longuest increasing subsequence of a random uniform permutation \cite{BaikDeiftJohansson}, last passage percolation \cite{JohanssonLPP}, log-Gamma polymer partition function \cite{BorodinCorwinRemenik, COSZ, NguyenZygouras, OSZ}, asymmetric simple exclusion process \cite{TWasep1, TWasep2, TWasep3}, value in $0$ of the Hopf-Cole solution to the KPZ equation \cite{AmirCorwinQuastel, BorodinGorinKPZ, CalabreseLeDoussalRosso, DotsenkoKPZ, SasamotoSpohn1, SasamotoSpohn2, SasamotoSpohn3}, O'Connell-Yor semidiscrete directed polymer partition function \cite{OConnellYor, BorodinCorwinFerrari, ImamuraSasamoto}, etc. Surveys documenting this rich universally class include \cite{JohanssonRandomGrowthRMT, JohanssonHouches, MajumdarSurveyUlam, MajumdarSchehrTopVp, BorodinGorinSurvey, BorodinPetrovIntProb} and we refer to them for further references. 


\medskip

So far, three descriptions of the $ \gue $ Tracy-Widom distribution were given:
\begin{enumerate}


\item its probability cumulative function is given by a Fredholm determinant \cite{TracyWidomCMP} under the form
\begin{align}\label{EqTW:detFredholm}
\Prob{ \TW_2 \leq s } := \det(I- \Kb_{\! \Ai_s})_{L^2(\Rr_+)} 
\end{align}

Here, $ \TW_2 $ is a random variable Tracy-Widom-distributed, $ \Kb_{\! \Ai_s} $ is an operator acting on $ L^2(\Rr_+) $ according to $ \Kb_{\! \Ai_s} f = \int_{\Rr_+} K_{\! \Ai_s}(\cdot, y) f(y) dy $ (for all $f \in L^2(\Rr_+)$) and $ K_{\! \Ai_s} $ is the ``kernel''/function given by 
\begin{align}\label{Def:Kernel:AiWithInt}
K_{\! \Ai_s}(x, y) := \int_{\Rr_+} \Ai_s(x + u) \Ai_s(y + u) du, \qquad \Ai_s := \Ai(\cdot + s)
\end{align}

The Airy function $ \Ai $ is the solution of the ordinary differential equation $ y''(t) = t y(t) $ with initial condition $ y(0) = 3^{-2/3} \Gamma(\frac{2}{3})\inv $ and decay at infinity satisfying $ y(x) \to 0 $ when $ x \to +\infty $. An integral representation of this function is given by 
\begin{align*}
\Ai(t) = \int_{\Rr} \cos\prth{ t \xi + \frac{\xi^3}{3} } \frac{d\xi}{2\pi}
\end{align*}

$ $


\item This last probability cumulative function is also given by 
\begin{align}\label{EqTW:Painleve}
\Prob{ \TW_2 \leq s } := \exp\prth{ - \int_\Rr (x-t)_+ \, q(t)^2  dt  }, \qquad x_+ := \max\{x, 0\}
\end{align}
where $ q $ is the Hastings-McLeod solution of the Painlev\'e II equation given by\footnote{The general Painlev\'e II equation is $ q''(t) = t q(t) + 2 q(t)^3 + \alpha $ for $ \alpha \in \Rr $ ; the Hastings-McLeod equation is $ q''(t) = t q(t) + 2 q(t) \abs{q(t)}^\beta $. The Hastings-McLeod solution corresponds to $ \alpha = 0 $/$ \beta = 2 $ and is the only case where solutions can be bounded on $ \Rr $, see \cite{HastingsMcLeod}.} \cite[(1.11), (1.12)]{TracyWidomCMP} 
\begin{align}\label{Def:PainlevéII}
q''(t) = t q(t) + 2 q(t)^3 + \alpha, \hspace{+1cm} \alpha = 0, \qquad  q(t) \equivalent{t \to +\infty }  \Ai(t), \qquad q(t) \equivalent{t \to -\infty }  \sqrt{-\frac{t}{2} }
\end{align}

The Painlev\'e II ODE admits moreover a representation as a solution of an integrable PDE given in terms of a Lax pair/isomonodromy problem discovered by Flaschka and Newell \cite{FlaschkaNewell}/\cite[32.4 (iii) p. 728]{NISThandbook}. Bloemendal and Virag proved that the solution of this PDE involving half of this Lax pair can be expressed as a probability of non-explosion for the solution of a stochastic Riccatti equation related with limits of spiked matrix models \cite[(1.7), (1.8)]{BloemendalVirag} (see also the introduction of \cite{BorotNadal}).


$ $


\item A last description uses the Edelman-Sutton theory of stochastic operators \cite{EdelmanSutton, SuttonPhD} formalised by Ramirez, Rider and Virag \cite{RamirezRiderVirag} and is valid not only for the classical cases $ \beta \in \ensemble{1, 2, 4} $ but for for all $ \beta > 0 $. Define the Stochastic Airy Operator by
\begin{align*}
SAO_\beta := \frac{d^2}{dx^2} - x - \frac{2}{\sqrt{\beta} } \dot{B}(x)
\end{align*}
where $ \dot{B} $ is a white noise (the distributional derivative of a Brownian motion), i.e. a Gaussian distribution whose covariance ``function'' is given by $ \Ee ( \dot{B}(x) \dot{B}(y) ) = \delta_0(x - y) $. This operator is of the Sturm-Liouville/killed diffusion type and acts on the modified Sobolev space 
\begin{align*}
H^1_{0, *} := \ensemble{ f \in \Ce^1(\Rr_+) \, / \, f(0) = 0, \int_{\Rr_+} \prth{ f'(x)^2 + (1 + x)f(x)^2 } dx < +\infty }
\end{align*}

The classical theory of compact operator can be proven to apply a.s. to the random operator $ SAO_\beta $ in the space $ H^1_{0, *} $ \cite{EckhardtGesztesyNicholsTeschl, Minami}~; its eigenvalues can be written as $ 1 > \lambda_1(\beta) > \lambda_2(\beta) > \dots > 0 $. Using the minimax representation of Courant-Fischer, one thus has $ \lambda_1(\beta) = \sup_{f \in  H^1_{0, *}, \norm{f}_* = 1} \bracket{f, SAO_\beta f}_{\!*} $ (with obvious notations). Passing to the limit on tri-diagonal models of random matrices yields $ \TW_\beta \eqlaw \lambda_1(\beta) $ (see \cite[conj. 3.10-6.5]{EdelmanSutton} \& \cite[p. 921]{RamirezRiderVirag}) and in particular, using an integration by parts
\begin{align}\label{EqTW:SAO}
\TW_2 \eqlaw \sup_{f \in H^1_{0, *}, \norm{f}_* = 1 } \ensemble{ \sqrt{2} \int_{\Rr_+} f(x)^2 dB(x) - \int_{\Rr_+} \prth{ f'(x)^2 + x f(x)^2} dx } 
\end{align}

\end{enumerate}

\medskip

In view of these last results, a natural question can be posed:

\begin{question}\label{Question:TW}
Can one find an algebraic operation that allows to get Tracy-Widom distributions from independent random variables~?
\end{question}

Question \ref{Question:TW} was asked e.g. during a workshop at the American Mathematics Institute organised by T. Tao and V. Vu in the fall semester 2010 \cite[prob. (15)]{AIMproblems}\footnote{
The question asks for a sequence of i.i.d. random variables, but they can always be obtained out of independent random variables by inversion of the cumulative distribution function, see remark~\ref{Rk:MaxWithIIDs}.
}, in \cite[pb. 7, after (16)]{DeiftOpenProblems} and in \cite[Qu. 2.4]{BarhoumiKac1}.

\medskip

We answer question \ref{Question:TW} with the following description of $ \TW_2 $ (theorems~\ref{Thm:TW2max} and~\ref{Thm:TW2max:bis} in the sequel)~:

\begin{shaded}
\begin{theorem}[$ \TW_2 $ is a maximum of a sequence of independent random variables]\label{Thm:TW2max}
There exist independent random variables $ (Z_k(\Ai))_{k \geq 0} $ such that
\begin{align}\label{EqMax:TW2}
\boxed{\TW_2 \eqlaw \max_{k \geq 0} Z_k(\Ai)  }  
\end{align}
\end{theorem}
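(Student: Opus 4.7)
The plan is to spectrally diagonalise the Airy integral operator $\Kb_{\!\Ai_s}$ on $L^2(\Rr_+)$ and to extract a product formula from \eqref{EqTW:detFredholm}. After the substitution $v = u+s$, the kernel reads $K_{\!\Ai_s}(x,y) = \int_s^{+\infty}\Ai(x+v)\Ai(y+v)\,dv$, exhibiting $\Kb_{\!\Ai_s}$ as a self-adjoint, positive, trace-class operator; let $(\mu_k(s))_{k \geq 0}$ be its eigenvalues, ordered decreasingly. The Airy orthogonality identity $\int_\Rr \Ai(x+v)\Ai(y+v)\,dv = \delta(x-y)$ (obtained from the Fourier representation of $\Ai$ and the cancellation of the cubic phases under $\xi_2 = -\xi_1$) shows that the full-line version of $\Kb_{\!\Ai_s}$ equals the identity, so by restriction $\mu_k(s) \in [0,1]$ for every $k$ and $s$. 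The standard factorisation of Fredholm determinants for trace-class operators then yields
\[
\Prob{\TW_2 \leq s} \;=\; \det\bigl(I-\Kb_{\!\Ai_s}\bigr)_{L^2(\Rr_+)} \;=\; \prod_{k \geq 0}\bigl(1-\mu_k(s)\bigr).
\]

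The second step is to check that $F_k(s) := 1 - \mu_k(s)$ is the cumulative distribution function of a proper $\Rr$-valued random variable $Z_k$. The quadratic form
\[
\bracket{ f, \Kb_{\!\Ai_s} f }_{L^2(\Rr_+)} \;=\; \int_s^{+\infty}\abs{\bracket{\Ai(\cdot + v),\,f}}^2 dv
\]
is monotone non-increasing in $s$, so by Courant--Fischer each $\mu_k(\cdot)$ is non-increasing, making $F_k$ non-decreasing. Continuity of $\mu_k$ in $s$ is inherited from trace-norm continuity of $s \mapsto \Kb_{\!\Ai_s}$, itself a consequence of the super-exponential decay of $\Ai$ on $\Rr_+$. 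For the limits, $\Kb_{\!\Ai_s} \to 0$ in trace norm as $s \to +\infty$ forces $F_k(s) \to 1$; conversely, $\Kb_{\!\Ai_s} \to I$ strongly as $s \to -\infty$, and applying min-max to any fixed $(k{+}1)$-dimensional subspace $V_0$ gives $\liminf_{s \to -\infty}\mu_k(s) \geq \min_{f \in V_0,\, \norm{f}=1}\lim_{s \to -\infty}\bracket{f,\Kb_{\!\Ai_s} f}= 1$, hence $F_k(s) \to 0$.

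Finally, let $(Z_k)_{k \geq 0}$ be independent with respective CDFs $(F_k)_{k \geq 0}$ (realised e.g.\ by inverse-CDF sampling on a product probability space). Their maximum satisfies
\[
\Prob{\max_{k \geq 0} Z_k \leq s} \;=\; \prod_{k \geq 0} F_k(s) \;=\; \prod_{k \geq 0}\bigl(1-\mu_k(s)\bigr) \;=\; \Prob{\TW_2 \leq s},
\]
which proves \eqref{EqMax:TW2}. Almost-sure finiteness of the maximum follows from $\sum_k (1 - F_k(s)) = \sum_k \mu_k(s) = \tr{\Kb_{\!\Ai_s}} < +\infty$ and Borel--Cantelli. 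The main obstacle is not the existence of such a factorisation, which is essentially functional-analytic, but the concrete identification of the $F_k$: as the abstract announces, the eigenvalue problem for $\Kb_{\!\Ai_s}$ is diagonalised by a rescaling of the Heurtley--Slepian prolate hyperspheroidal wave functions, and this is what would turn the abstract $Z_k(\Ai)$ into a workable description beyond pure existence.
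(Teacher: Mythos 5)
Your proposal is correct, but it takes a different route from the paper's proof of this statement. The paper's proof of Theorem~\ref{Thm:TW2max} expands $\log\det(I-\Hb(\Ai_t)^2)$ via $(I-\Kb_s)\inv=\sum_{\ell\ge0}\Kb_s^\ell$ to obtain the factorisation \eqref{Eq:Fredholm:HankelSquareWithExp}, so the candidate CDFs are $t\mapsto\exp(-\int_t^{+\infty}\norm{\Hb(\Ai_s)^\ell\Ai_s}^2_{L^2(\Rr_+)}ds)$, and all the work (Lemma~\ref{Lemma:ExistenceZkAi}) goes into the Airy asymptotics showing these integrals diverge at $-\infty$, i.e.\ no mass escapes; a second proof (Theorem~\ref{Thm:TW2max:bis}) shares your Lidskii starting point but then uses the Fuchs lemma to differentiate each eigenvalue branch and identify an explicit density $\bracket{\Ai_s,\Psi_k^{(s)}}^2_{L^2(\Rr_+)}$. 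You instead keep the Lidskii factorisation $\det(I-\Kb_{\Ai_s})=\prod_k(1-\mu_k(s))$ and show each $1-\mu_k$ is a CDF by soft means: $0\le\Kb_{\Ai_s}\le I$ from the decomposition of the identity (best phrased as unitarity of the Airy transform rather than the formal $\delta$ identity), monotonicity of each ordered eigenvalue in $s$ via Courant--Fischer, continuity from operator/trace-norm continuity, $\mu_k(s)\to0$ from $\trace\,\Kb_{\Ai_s}\to0$, and $\mu_k(s)\to1$ from strong convergence $\Kb_{\Ai_s}\to I$ together with min--max on a fixed $(k{+}1)$-dimensional subspace (the interchange of limit and minimum there is legitimate because pointwise convergence of quadratic forms on a finite-dimensional space is uniform on its unit sphere -- worth a sentence). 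What your argument buys is robustness and economy: it needs neither the delicate lower bounds of Lemma~\ref{Lemma:ExistenceZkAi} nor the differentiability hypotheses of the Fuchs lemma, and it generalises immediately to any family of self-adjoint kernels interpolating monotonically between $0$ and $I$. What it gives up is exactly what the paper exploits afterwards: an explicit description of the laws (densities $\bracket{\Ai_s,\Psi_k^{(s)}}^2$ tied to the prolate hyperspheroidal wave functions, the representation $Z_k'(\Ai)\eqlaw\Phi_k\inv(\ee_k)$ of Remark~\ref{Rk:MaxWithIIDs}, and the KPZ and $\TW_1$ extensions), so for the existence statement alone your proof is complete, while the paper's constructions carry the extra quantitative information.
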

\end{shaded}

Two random variables $ Z_k(\Ai) $ are defined in lemma~\ref{Lemma:ExistenceZkAi}/\eqref{Def:Law:PsiSquare:phi} and in theorem~\ref{Thm:TW2max:bis}/\eqref{Def:Law:PsiSquare:TW2bis}. The description given in \eqref{Def:Law:PsiSquare:TW2bis} uses the $ L^2(\Rr_+) $-normalised eigenvectors of $ \Hb(\Ai_s) $ or equivalently the eigenvectors of the Tracy-Widom commuting differential operator $ \frac{d}{dx}x\frac{d}{dx} - x(x + s) $ acting on $ L^2(\Rr_+) $ \cite[IV-B p. 165]{TracyWidomCMP} which are a particular rescaling of the \textit{prolate hyperspheroidal wave functions} introduced independently by Slepian \cite{SlepianIV} and Hurtley \cite{HeurtleyI, HeurtleyII} 
(see also \cite{FriedenPSWF}, \cite[ch. 30.12 p. 704]{NISThandbook} and \cite{CasperGrunbaumYakimovZurrianAlgAiryComm, GrunbaumVasquezZubelli})~; we prove this last fact and give a quick summary of results on these functions in Annex~\ref{Annex:PSWF}.

\medskip
\subsection{Max-independence structures}\label{SubSec:Intro:MaxIndepStruct}

\subsubsection{Detropicalised motivation~: independence for traces and total parts}

Two of the most classical models in integrable probability are given by the $ \gue_N $ and the Schur measure, defined respectively in \eqref{Def:Law:GUE} and \eqref{Def:Schur:Measure}. The eigenvalues $ (\lambdab_{k, N})_{1 \leq k \leq N} $ of $ \Mb_{\! N} \sim \gue_N $ satisfies the identity in law (which is in fact a deterministic identity given by the invariance by conjugacy of the trace)
\begin{align*}
\sum_{k = 1}^N \lambdab_{k, N} \eqlaw \sum_{k = 1}^N \Mb_{\! N}[k, k]
\end{align*}

The sum in the RHS is composed of independent (identically distributed) Gaussians. An identity of the same type is given for the parts of the random partition $ \lambdab^{\Schur(\pb, \qb)} := (\lambdab_k^{\Schur(\pb, \qb)})_{k \geq  1} \sim \Pp_{\Schur(\pb, \qb)} $ where $ \pb := (p_k)_k $, $ \qb := (q_k)_{k \geq 1} $ and $ p_i, q_j \in (0, 1) $~:
\begin{align*}
\sum_{k \geq 1}  \lambdab_k^{\Schur(\pb, \qb)} \eqlaw \sum_{i, j \geq 1} \Geom(p_i q_j)
\end{align*}

Here, $ \Geom(q) $ designates a random variable geometrically distributed with the convention $ \Prob{\Geom(q) = k} = (1 - q) q^k \Unens{k \in \Nn } $~; see \eqref{Eq:SchurMeas:LawTotaSum} for a proof. In the same vein as for the $ \gue $, the random variables in the RHS of the previous identity are independent.

\medskip

One could call such identities in law between sums of dependent random variables and sums of independent random variables a \textit{sum-independence structure}. In this paper, we will be concerned with a \textit{tropical} version (see \cite{KirillovTropical}) of these two identities~:
\begin{align*}
\max_{k \geq 1}  \lambdab_k  \eqlaw \max_{k \geq 1} W_k
\end{align*}
where the random variables $ (W_k)_{k \geq 1} $ in the RHS are independent. We call such an identity a \textit{max-independence structure}\footnote{It also includes the case of a minimum, so we could also call it a \textit{tropical independence structure}, but in the same vein convexity relates to concavity, we will stick to this terminology.}.

\medskip
\subsubsection{A previous instance in the literature}

Max-independence structures for eigenvalues of random matrix ensembles made a first apparition in the Ginibre ensemble \cite{Ginibre}. This ensemble, noted $ \GinE_N $ is the probability space $ (\Me_N(\Cc), \Pp_N) $ where $ \Me_N(\Cc) $ is the space of $ N \times N $ complex matrices and $ \Pp_N $ is the Gaussian measure $ \Pp_N(dM) = \pi^{-N^2} e^{- \trace(M^* M) } dM $. If $ (\lambdab_{k, N})_{1 \leq k \leq N} $ denotes the eigenvalues of $ \Gb_{\! N} \sim \GinE_N $, ordered by modulus size, i.e. $ \abs{\lambdab_{1, N} } \geq \abs{\lambdab_{2, N}} \geq \dots $, one has the formula
\begin{align}\label{EqLaw:Ginibre}
\abs{\lambdab_{1, N}} \eqlaw \max_{1 \leq k \leq N} \sqrt{\gammab_{ k}}, \qquad \gammab_{ k} \sim \Gammab( k)
\end{align}
with independent random variables $ \gammab_{ k} \eqlaw \sum_{j = 1}^k \ee^{(j)} $, the $ \ee^{(j)} $ being i.i.d. exponential random variables. Using \eqref{EqLaw:Ginibre}, Rider showed that \cite[thm. 1]{RiderGinibre}
\begin{align}\label{CvLaw:Ginibre:Rider}
\sqrt{4N \alpha_N}\prth{\! \frac{\abs{\lambdab_{1, N}}}{\sqrt{N}} - 1 - \sqrt{\frac{\alpha_N}{4N}}\, } \cvlaw{N}{+\infty} \Gumbel(1), \qquad \alpha_N := \log(N/2\pi) - 2\log\log(N)
\end{align}
where $ \Gumbel(1) $ designates the Gumbel distribution, i.e. the law of $ -\log\ee $.

See also \cite[ch. 4.3.7, ex. 4.7.2, ch. 6.4]{BenHougKrishnapurPeresVirag} for more details and the blog \cite{ChaffaiBlogGinibre} for a pedagogical introduction.

\medskip

The presence of such an independence structure in $ \GinE_N $ asks legitimately the question of its generality in any matrix ensemble. But more generally, since the universality class of $ \TW_2 $ previously described is so rich, a more natural question can be asked:

\begin{question}\label{Q:IndependenceStructure}
Is the max-independence structure already present before renormalisation in all the models previously cited ?
\end{question}

A positive answer to question~\ref{Q:IndependenceStructure} is provided in the following cases that will be described in more details throughout the article~:
\begin{itemize}

\item the continuum random polymer (\S~\ref{SubSec:TW:HMHoperators}), 

\item random matrix ensembles (\S~\ref{Sec:RMT}), 

\item the symmetric Schur measure (\S~\ref{Sec:SchurMeasure}).

\end{itemize}


In the case of the symmetric Schur measure, one has a very remarkable structure~: an i.i.d. randomisation of the (deterministic) sequence of integers (see \S~\ref{Subsec:SchurMeasure:General}). Each of these cases has an equality in law analogous to \eqref{EqMax:TW2}. For instance, in the case of the $ GU\!E $, one has

\begin{shaded}
\begin{theorem}[Theorem~\ref{Thm:GUEmax} in the sequel]\label{Thm:Init:GUEmax}
There exist independent random variables $ (W_k)_{0 \leq k \leq N - 1} $ such that, if $ \lambdab_{1, N} $ designates the largest eigenvalue of a random matrix $ \Hb_{\!N}\sim GU\!E_N $,
\begin{align}\label{EqMax:GUE:init}
\boxed{ \lambdab_{1, N} \eqlaw \max_{0 \leq k \leq N - 1} W_k }
\end{align}
\end{theorem}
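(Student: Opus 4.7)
The plan is to exploit the determinantal structure of the $\gue_N$ eigenvalue process. Writing $K_N(x,y)=\sum_{j=0}^{N-1}\phi_j(x)\phi_j(y)$ for the Hermite kernel (the integral kernel of the orthogonal projection of $L^2(\Rr)$ onto the span of the first $N$ Hermite functions), the law of the eigenvalues of $\Hb_{\!N}\sim\gue_N$ is determinantal with correlation kernel $K_N$, so the gap probability at $s$ reads
\begin{align*}
\Prob{\lambdab_{1,N}\le s}\;=\;\Prob{\text{no eigenvalue in }(s,+\infty)}\;=\;\det\prth{I-\Kb_N^{(s)}}_{L^2(s,+\infty)},
\end{align*}
where $\Kb_N^{(s)}$ denotes the integral operator on $L^2(s,+\infty)$ with kernel $K_N$.

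Since $K_N$ is a rank-$N$ orthogonal projection, $\Kb_N^{(s)}$ is a positive self-adjoint trace-class operator of rank at most $N$ whose spectrum lies in $[0,1]$. Writing its ordered eigenvalues $1\ge\mu_0(s)\ge\mu_1(s)\ge\dots\ge\mu_{N-1}(s)\ge 0$ (counted with multiplicity), the Fredholm determinant collapses to a finite product:
\begin{align*}
\Prob{\lambdab_{1,N}\le s}\;=\;\prod_{k=0}^{N-1}\prth{1-\mu_k(s)}.
\end{align*}

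The next step is to check that for each $k$ the map $F_k:s\mapsto 1-\mu_k(s)$ is a cumulative distribution function. Monotonicity follows from Cauchy interlacing for compressions: if $s\le s'$ then $L^2(s',+\infty)\subset L^2(s,+\infty)$ and $\Kb_N^{(s')}$ is, up to the natural identification, the compression of $\Kb_N^{(s)}$ to this smaller subspace, so $\mu_k(s')\le\mu_k(s)$ and $F_k(s)\le F_k(s')$. The limits at infinity follow from trace-norm continuity: as $s\to-\infty$ the operator $\Kb_N^{(s)}$ tends to the rank-$N$ projection $K_N$ itself, whose non-zero eigenvalues all equal $1$, yielding $F_k(-\infty)=0$~; as $s\to+\infty$ it tends to $0$, giving $F_k(+\infty)=1$. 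Right-continuity (in fact continuity) of $F_k$ is inherited from the smooth dependence of $\Kb_N^{(s)}$ on $s$ in trace norm, via Lipschitz continuity of the spectrum. With these three properties in hand, one defines independent random variables $(W_k)_{0\le k\le N-1}$ on a common probability space by $W_k:=F_k^{-1}(U_k)$, where $(U_k)_{0\le k\le N-1}$ are i.i.d.\ uniform on $[0,1]$, and independence yields
\begin{align*}
\Prob{\max_{0\le k\le N-1}W_k\le s}\;=\;\prod_{k=0}^{N-1}F_k(s)\;=\;\Prob{\lambdab_{1,N}\le s},
\end{align*}
which is the asserted identity~\eqref{EqMax:GUE:init}.

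The soft part of this blueprint is essentially the diagonalisation of the Fredholm determinant together with the fact that $K_N$ is a projection. The main obstacle is not existence, which is relatively cheap, but rather the conversion of this implicit description into an \emph{analytically tractable} form for the marginal laws $F_k$. As announced in the abstract, the natural route is to diagonalise $\Kb_N^{(s)}$ via the classical second-order differential operator that commutes with the Hermite kernel, and to interpret the resulting eigenvalues $\mu_k(s)$ through the Okamoto--Noumi--Yamada theory of the sigma-form of the Painlev\'e\,$I\!V$ equation, in the spirit of Forrester--Witte; this is where the genuine analytic content of the statement lies.
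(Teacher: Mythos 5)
Your argument is correct, but it takes a genuinely different route from the paper's. You factor the gap probability through the Fredholm determinant of the compressed Hermite projection, $\Prob{\lambdab_{1,N}\le s}=\prod_{k=0}^{N-1}\prth{1-\mu_k(s)}$, and then check by soft operator-theoretic means (min--max monotonicity of eigenvalues under compression to $L^2(s',+\infty)\subset L^2(s,+\infty)$, trace-norm continuity in $s$, and the limits $\Kb_N^{(s)}\to K_N$ resp.\ $0$) that each factor is a distribution function, finishing with the inverse-CDF construction; all of these steps hold, so the existence statement is proved. The paper instead works on the orthogonal-polynomial side: the Andr\'eieff--Heine--Szeg\"o formula \eqref{Eq:AndreieffHeineSzego} writes the same probability as $\prod_{k=0}^{N-1}\norm{H_k(\cdot\vert s)}^2_{L^2((-\infty,s],\mu)}/k!$ with truncated Hermite polynomials, and a one-line differentiation-plus-orthogonality trick (the cross term $\bracket{H_k,\partial_s H_k}$ vanishes by degree count) identifies each factor as $\int_{-\infty}^s H_k(x\vert x)^2\,d\mu(x)/k!$, i.e.\ gives the explicit densities \eqref{Def:Law:PsiSquare:GUE}. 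What your approach buys is generality and economy (it works verbatim for any determinantal projection ensemble without computing anything); what the paper's buys is the explicit law of each $W_k$, which is exactly what feeds the Forrester--Witte/Painlev\'e~IV analysis and the Poisson-approximation rescaling later in the article. Note also that the two factorisations are genuinely different: your $1-\mu_k(s)$ are not the normalised truncated-polynomial norms, so your $W_k$ are a priori different random variables realising the same maximum --- which is fine for the statement as phrased, several decompositions being possible.

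One caveat on your closing paragraph: for the explicit description you propose to diagonalise $\Kb_N^{(s)}$ via a differential operator commuting with the Hermite kernel on the half-line, but no such commuting operator is known for the $\gue$ kernel restricted to $L^2([s,+\infty),\mu)$ (the paper states this explicitly in the remark preceding Question~\ref{Q:CvGUE2TWwithMax}); the Painlev\'e~IV content of Theorem~\ref{Thm:GUEmax} enters instead through the Forrester--Witte representation of the norms $\widetilde{E}_n(x;a)$ of truncated Hermite polynomials, not through the spectral data $\mu_k(s)$.
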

\end{shaded}

While $ W_0 \sim \Ns(0, 1) $, the law of $ W_k $ for $ k \geq 1 $ is explicitely given in theorem~\ref{Thm:GUEmax} but requires to use the $ \sigma $-form of the Painlev\'e IV equation studied by Okamoto \cite{Okamoto0, Okamoto1, Okamoto2, Okamoto3}, simplified by Noumi-Yamada \cite{NoumiYamadaPainleve, NoumiPainleveBook} and applied in Random Matrix Theory by Forrester-Witte \cite{ForresterWitteTau2and4} (see also \cite{ForresterWitteTau3and5, ForresterWittePainleve3p5} and \cite[ch. 8 p. 328]{ForresterBook} for other cases that we will not treat in this paper).

\medskip
\subsection{Rescaling of the largest eigenvalue of the $ GU\!E $}\label{SubSec:Intro:GUE}

The first historical appearance of the Tracy-Widom distribution in \cite{TracyWidomPLB, TracyWidomCMP} was obtained by the rescaling of the largest eigenvalue of the $ GU\!E_N $ when $ N\to+\infty $. From this perspective, the max-independence structure given by \eqref{EqMax:GUE:init}/\eqref{EqMax:GUE} is particularly interesting as one can use the classical Poisson approximation for a maximum of independent random variables: writing
\begin{align*}
\ensemble{\max_{0 \leq k \leq N - 1} W_k \leq x} = \ensemble{\sum_{k = 0}^{N - 1} \Unens{W_k > x} = 0}
\end{align*}
and analysing the parametric sum of independent indicators $ S_N(x) := \sum_{k = 0}^{N - 1} \Unens{W_k > x} $, one can perform a Poisson approximation $ S_N(x) \approx \Poisson(\mu_N(x)) $ where $ \mu_N(x) := \Esp{S_N(x)} = \sum_{k = 0}^{N - 1} \Prob{W_k > x} $. The error in such an approximation is moreover explicit, given for instance by the Chen-Stein method \cite{Chen}. As a result, one can try to put in a more classical probabilistic perspective the result of Tracy and Widom, for instance in the vein of Rider's analysis of the edge fluctuations in the complex Ginibre ensemble \cite{RiderGinibre}. Such an analysis is performed in \S~\ref{SubSec:RMT:Painleve} and its main result reads:

\begin{shaded}
\begin{theorem}[Theorem~\ref{Thm:NewExprTW2} in the sequel]\label{Thm:Init:NewExprTW2}
One has
\begin{align}\label{EqTW:PainlevureNouveau:Init}
\Prob{\frac{\lambdab_{1, N} - 2 \sqrt{N}}{N^{-1/6}} \leq \sqrt{2}\, t} \tendvers{N}{+\infty } \Prob{\TW_2 \leq \sqrt{2}\, t} = \exp\prth{ - \int_\Rr (x - t)_+ \Qe(t)^2 dt }  
\end{align}
with $  x_+ := x \Unens{x \geq 0} $ and 
\begin{align}\label{Def:Q^2PII:Init}
\Qe(x) :=   \exp\prth{ - \int_x^{+\infty} \delta \sigma }, \qquad \delta \sigma := \sigma(\cdot ; 1) - \sigma(\cdot ; 0)
\end{align}
where $ \sigma(\cdot ; a) $ is the solution of the $ \sigma $-form of the Painlev\'e II equation given in \eqref{Def:Painleve:SigmaForm:II}.
\end{theorem}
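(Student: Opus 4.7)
The plan is to combine the max-independence structure of Theorem~\ref{Thm:Init:GUEmax} with the classical Poisson approximation for sums of independent indicators, to take the edge scaling limit through the $\sigma$-form of Painlev\'e~IV, and to finally match the resulting expression with the classical Tracy-Widom one using identities \`a la Forrester-Witte.

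First I would rewrite, using Theorem~\ref{Thm:Init:GUEmax},
$$\Prob{\lambdab_{1, N} \leq x} = \Prob{\max_{0\leq k\leq N-1} W_k \leq x} = \Prob{S_N(x)=0},\qquad S_N(x):=\sum_{k=0}^{N-1}\Unens{W_k>x},$$
and invoke the Chen-Stein bound
$$\abs{\Prob{S_N(x)=0}-e^{-\mu_N(x)}}\leq \min\!\prth{1,\mu_N(x)\inv}\sum_{k=0}^{N-1}\Prob{W_k>x}^{2},\qquad \mu_N(x):=\sum_{k=0}^{N-1}\Prob{W_k>x}.$$
The first task is to verify, in the edge window $x_N(t):=2\sqrt{N}+\sqrt{2}\,t\,N^{-1/6}$, that the right-hand side of the Chen-Stein bound is $o(1)$ while $\mu_N(x_N(t))$ converges to a non-trivial finite limit.

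Second, the explicit laws of the $W_k$'s provided by Theorem~\ref{Thm:Init:GUEmax} are written through the Forrester-Witte formulae in terms of the $\sigma$-form of Painlev\'e~IV with parameters depending on $k$. I would pass to the limit $N\to+\infty$ using the Okamoto-Noumi-Yamada coalescence/degeneration $P_{I\!V}\rightsquigarrow P_{I\!I}$ at the level of the Hamiltonian $\sigma$-forms, so that the discrete sum $\mu_N(x_N(t))$ is recognised as a Riemann-sum discretisation of
$$\mu_N(x_N(t))\tendvers{N}{+\infty}\int_t^{+\infty}(x-t)\,\Qe(x)^2\,dx=\int_{\Rr}(x-t)_+\,\Qe(x)^2\,dx,$$
with $\Qe$ given by \eqref{Def:Q^2PII:Init}. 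Exponentiating and combining with Step~1 then yields the middle equality in~\eqref{EqTW:PainlevureNouveau:Init} and in particular reproves the convergence of the rescaled largest GUE eigenvalue to $\TW_2$.

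Third, to obtain the rightmost equality in \eqref{EqTW:PainlevureNouveau:Init}, I would compare the new expression with the classical Tracy-Widom formula~\eqref{EqTW:Painleve}. Integrating by parts and using the Jimbo-Miwa-Okamoto relation that links $\delta\sigma$ to the square of the Hastings-McLeod transcendent $q$, together with the matching tail $\Qe(x)\sim\Ai(x)$ as $x\to+\infty$ inherited from the initial data of $\sigma(\,\cdot\,;a)$, one checks that $\Qe$ and $q$ give rise to the same double integral. This is precisely where the manipulations \`a la Forrester-Witte come in.

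The main obstacle will be Step~2: controlling the uniform edge asymptotics of the individual cumulative distribution functions of $W_k$ (equivalently, of the $\sigma$-Painlev\'e~IV transcendents with $k$-dependent parameters) in a regime where $k$ and $x$ simultaneously scale with $N$, so as to ensure at once (i) the smallness of the Chen-Stein error, (ii) the Riemann-sum convergence of $\mu_N$ to the integral above, and (iii) the rigorous degeneration $P_{I\!V}\to P_{I\!I}$ at the $\sigma$-level. Once this uniform control is available, Steps~1 and~3 become, respectively, a standard probabilistic estimate and an algebraic identity inside the Okamoto-Noumi-Yamada / Forrester-Witte framework.
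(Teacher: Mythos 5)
Your overall architecture is the same as the paper's: max-independence plus Chen--Stein Poisson approximation, edge asymptotics of the laws of the $W_k$ through the Forrester--Witte quantities $\widetilde{E}_n(x;a)$ and their $\sigma$-Painlev\'e representation, a Riemann-sum over the top $O(N^{1/3})$ indices producing $\Qe^2 * \id_+$, and a Forrester--Witte identity to match with the classical formula. The problem is that what you defer as ``the main obstacle'' in Step~2 is precisely where the proof lives, and the route you propose to close it does not suffice as stated. The coalescence $\sigma$-$P_{I\!V}\rightsquigarrow\sigma$-$P_{I\!I}$ under the edge scaling $\alpha_N=1-cN^{-2/3}$ is also carried out in the paper, but it only yields locally uniform convergence of the rescaled transcendents to the shifted $\sigma$-$P_{I\!I}$, with no rate; without a rate one can neither control the Chen--Stein error $\sum_k p_k^2$, nor justify the Riemann-sum limit of $\mu_N$, nor exclude the usual escape of mass at infinity for the densities. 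The paper closes this gap by importing the Wu--Xu--Zhao Riemann--Hilbert asymptotics for $\frac{d}{d\mu_n}\ln \Hb_{\! n}(\mu_n\vert\alpha,\omega)$, which are uniform with an explicit $O_s(1/n)$ remainder, together with the boundary behaviour of $\sigma(\cdot\,;a)$ at $\pm\infty$; some substitute of comparable strength is indispensable in your plan. A second omission: since $\widetilde{E}_n(x;1)\sim x^{n}$ as $x\to+\infty$, the Forrester--Witte representation of $\He_n$ cannot be anchored at $+\infty$, so the multiplicative constant (the basepoint of integration) must be pinned down; the paper devotes a separate argument to this (choice of the normalising point $c_n^*$, analysed via the Lambert $W$ function), and your proposal does not address it.

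Two smaller points. In Step~3 the identity you actually need is not the Jimbo--Miwa--Okamoto relation $\sigmab_{\!0}'=-\qb_0^2$ but $\delta\sigma=\sigmab_{\!1}-\sigmab_{\!0}=\qb_0'/\qb_0$, equivalently $\Qe=q$; the paper proves this (Lemma~\ref{Lemma:q=Q}) from the Gambier-type/B\"acklund relations between $\qb_{\pm1/2}$ and $\qb_0$ (or from the Toda equation), and your ``integration by parts'' sketch should be replaced by that identity. Finally, note that each individual $W_{\pe{N-cN^{1/3}}}$ contributes, after rescaling, a density converging to the \emph{shifted} profile $\Qe(\cdot+c)^2$; the linear weight $(x-t)_+$ only appears after integrating over the shift parameter $c$, which your Riemann-sum sentence contains implicitly but should be made explicit to get the convolution structure of \eqref{EqTW:PainlevureNouveau:Init} right.
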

\end{shaded}

We would like to emphasize that the very structure of the limit is already present for fixed $N$ with a function $ \delta \sigma^{(N)} $ in $ \mu_N(x) $ (which is in fact itself already of this form with a Stieltjes integral), and that it is preserved in the form~\eqref{EqTW:PainlevureNouveau:Init} after passing to the limit.

Comparing \eqref{EqTW:Painleve} and \eqref{EqTW:PainlevureNouveau:Init}, one sees that $ q = \Qe $~; we give a direct proof of this fact in lemma~\ref{Lemma:q=Q} using the Forrester-Witte theory.

\medskip

In fact, given the discrepancy between \eqref{CvLaw:Ginibre:Rider} and \eqref{EqTW:PainlevureNouveau:Init}, one can moreover ask the question~:

\begin{question}\label{Q:KPZUniversalityMaxIndep}
Which features of the max-independence structure allows to conclude that one is in the KPZ universality class as opposed to (say) the Gumbel universality class~?
\end{question}

We will see that the very structure of the proof by Poisson approximation allows to give a criteria that answers question~\ref{Q:KPZUniversalityMaxIndep} in theorems~\ref{Thm:CvMaxIndep} and \ref{Thm:CvMaxRandomisedIntegers}. 

In both theorems, this will be a (right) Large Deviation Principle for the sequence $ (W_k)_k $ that will trigger the apparition of the limiting distribution, be it the Tracy-Widom, the Gumbel distribution or any other extreme value distribution. We would like to emphasize that the case of the Schur measure is particularly interesting~: the max-independence structure is given by an i.i.d. randomisation of the sequence of the integers (theorem~\ref{Thm:SchurMeas}) and its rescaling using the Poisson approximation gives \textit{directly} the convolutive structure \textit{without any approximation} other than the Poisson one (see the proof of theorem~\ref{Thm:CvMaxRandomisedIntegers}).

\medskip
\subsection{Organisation of the paper}\label{SubSec:Intro:Plan}

The plan of the article is as follows~: 
\begin{itemize}

\item we prove theorem \eqref{Thm:TW2max} in section \ref{Sec:TW} and extend it to the case of $ \TW_1 $ in \S~\ref{SubSec:TW:GOE} and to the continuum random polymer in \S~\ref{SubSec:TW:HMHoperators}.

\medskip
\item We investigate independence structure in random matrix ensembles in section~\ref{Sec:RMT} and prove there theorems~\ref{Thm:Init:GUEmax}/\ref{Thm:GUEmax} and \ref{Thm:Init:NewExprTW2}/\ref{Thm:NewExprTW2} in addition to theorem~\ref{Thm:CvMaxIndep} that answers question~\ref{Q:KPZUniversalityMaxIndep}. We would like to draw attention here to the problem~\ref{Q:CouplingGUE:max} which is, in the author's opinion, the most important in the article, and probably the tip of a very big iceberg.

\medskip
\item We analyse the case of the symmetric Schur measure in section~\ref{Sec:SchurMeasure}, starting with the Plancherel measure~; we give a general criteria of convergence towards any limiting distribution in theorem~\ref{Thm:CvMaxRandomisedIntegers}.  

\medskip
\item We give in Annex~\ref{Annex:SymFunc} some relevant notations on symmetric functions and the Schur measure and in Annex~\ref{Annex:PSWF} a summary of results on hyperspheroidal wave functions.

\end{itemize}

\medskip
\medskip
\section{$ \TW_2 $ is a maximum of independent random variables}\label{Sec:TW}
\medskip
\subsection{First approach}\label{SubSec:TW:FirstApproach}

\subsubsection{Prerequisites}

For an operator $ \Kb_{\! t} : L^2(\Rr_+) \rightarrow L^2(\Rr_+) $ of kernel $ K_t $, define $ \dot{K}_t := \frac{d}{dt} K_t $ and $ \dot{\Kb}_{\! t} : L^2(\Rr_+) \rightarrow L^2(\Rr_+) $ its associated operator. For $ \phi, \psi \in L^2(\Rr_+) $, we define $ \phi_t := \phi(\cdot + t) $, $ \psi_t := \psi(\cdot + t) $ and 
\begin{align*}
K_t(x, y) := \int_{\Rr_+} \phi_t(x + u) \psi_t(y + u) du  = \int_t^{+\infty} \phi(x + u) \psi(y + u) du 
\end{align*}

Define $ \Hb(\phi) $ to be the Hankel operator of symbol $ \phi $ acting on $ L^2(\Rr_+) $, i.e. the operator of kernel $ (x, y) \mapsto \phi(x + y) $. Then, one has
\begin{align*}
\Kb_{\! t} = \Hb(\psi_t) \Hb(\phi_t)  
\end{align*}

Set 
\begin{align*}
\psi^*(f) := \bracket{f, \psi}_{\! L^2(\Rr_+)}
\end{align*}

Then, 
\begin{align}\label{Eq:Operator:DotHsquare}
\dot{K}_t(x, y) = -\phi(x + t) \psi(y + t) = -\phi_t\otimes \psi_t(x, y) \qquad\Longleftrightarrow\qquad \dot{\Kb}_{\! t} = -\phi_t \otimes \psi_t^*
\end{align}

\medskip

Suppose now that $ \Kb_{\! t} $ has all its eigenvalues in $ (0, 1) $. Writing the following determinants and traces on $ L^2(\Rr_+) $, we have
\begin{align*}
\frac{\det(I - \Kb_{\! t})}{\det(I - \Kb_{\! a})} & = \exp\prth{ \trace(\log(I - \Kb_{\! t}) - \log(I - \Kb_{\! a} ) ) } \\
                   & = \exp\prth{ -\int_a^t \trace( (I - \Kb_{\! s})\inv \dot{\Kb}_{\! s} ) ds } \\
                   & = \exp\prth{ \int_a^t \trace( (I - \Kb_{\! s})\inv \phi_s \otimes \psi_s^* ) ds } \\
                   & = \exp\prth{ \int_a^t \bracket{  (I - \Kb_{\! s})\inv \phi_s ,  \psi_s }_{\! L^2(\Rr_+) }  ds }
\end{align*}

For $ a = +\infty $, as $ \Kb_{\! a} = 0 $, we get
\begin{align*}
\det(I - \Kb_{\! t})_{\! L^2(\Rr_+) }  & = \exp\prth{ -\int_t^{+\infty} \bracket{  (I - \Kb_{\! s})\inv \phi_s ,  \psi_s }_{\! L^2(\Rr_+) }  ds } \\
                 & = \exp\prth{ -\int_t^{+\infty} \sum_{\ell \geq 0} \bracket{  \Kb_{\! s}^\ell \phi_s ,  \psi_s }_{\! L^2(\Rr_+) }  ds } 
\end{align*}

And in the case where $ \phi = \psi $, one finally gets
\begin{align}\label{Eq:Fredholm:HankelSquareWithExp}
\det\prth{ I - \Hb(\phi_t)^2}_{\! L^2(\Rr_+) }  = \prod_{\ell \geq 0} \exp\prth{ -\int_t^{+\infty} \norm{  \Hb(\phi_s)^\ell \phi_s  }_{ L^2(\Rr_+) }^2  ds }
\end{align}

Suppose that $ s \mapsto \norm{  \Hb(\phi_s)^\ell \phi_s  }_{\! L^2(\Rr_+) }^2 \notin L^1(\Rr) $. Then, one clearly gets the existence of a random variable $ Z_\ell(\phi) $ such that
\begin{align}\label{Def:Law:PsiSquare:phi}
\Prob{Z_\ell(\phi) \leq t} =  \exp\prth{ -\int_t^{+\infty} \norm{  \Hb(\phi_s)^\ell \phi_s  }_{ L^2(\Rr_+) }^2  ds }
\end{align}
which implies that
\begin{align}\label{Eq:DetMax:Phi}
\det(I - \Hb(\phi_t)^2\, )_{\! L^2(\Rr_+) } = \prod_{\ell \geq 0} \Prob{Z_\ell(\phi) \leq t} = \Prob{\max_{\ell \geq 0} Z_\ell(\phi) \leq t} 
\end{align}

\subsubsection{Proof of theorem~\ref{Thm:TW2max}}

We now consider the case $ \phi = \Ai $ in \eqref{Def:Law:PsiSquare:phi}. It is clear that \eqref{Def:Kernel:AiWithInt} translates at the operator level into
\begin{align}\label{Eq:SquareHankel:Airy}
\Kb_{\! \Ai_s} = \Hb(\Ai_s)^2
\end{align}

To prove theorem~\ref{Thm:TW2max}, one uses \eqref{Eq:DetMax:Phi} and the

\begin{shaded}
\begin{lemma}[Existence of a random variable $ Z_\ell(\Ai) $]\label{Lemma:ExistenceZkAi}
One has
\begin{align*}
s \mapsto \norm{  \Hb(\Ai_s)^\ell \Ai_s  }_{\! L^2(\Rr_+) }^2 \notin L^1(\Rr)
\end{align*}
\end{lemma}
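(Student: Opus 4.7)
The plan is to convert the $L^2$-norm $\norm{\Hb(\Ai_s)^\ell \Ai_s}^2$ into the derivative of a trace, and then read off non-integrability from a growth estimate on the primitive. Specialising \eqref{Eq:Operator:DotHsquare} to $\phi=\psi=\Ai$ gives $\dot{\Kb}_{\! \Ai_s} = -\Ai_s \otimes \Ai_s^*$, and \eqref{Eq:SquareHankel:Airy} together with self-adjointness of $\Hb(\Ai_s)$ yields, by cyclicity of the trace,
\begin{align*}
\frac{d}{ds}\trace\prth{\Kb_{\! \Ai_s}^{\ell+1}} = (\ell+1)\trace\prth{\Kb_{\! \Ai_s}^\ell\,\dot{\Kb}_{\! \Ai_s}} = -(\ell+1)\bracket{\Kb_{\! \Ai_s}^\ell\,\Ai_s,\Ai_s}_{\! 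L^2(\Rr_+)} = -(\ell+1)\norm{\Hb(\Ai_s)^\ell\,\Ai_s}_{L^2(\Rr_+)}^2.
\end{align*}
Integrating from $t$ to $+\infty$ (the boundary term vanishes because $\Kb_{\! \Ai_s}\to 0$ in trace norm as $s\to+\infty$, by the superexponential decay of $\Ai$ on $\Rr_+$) gives the compact identity
\begin{align*}
\int_t^{+\infty}\norm{\Hb(\Ai_s)^\ell\,\Ai_s}_{L^2(\Rr_+)}^2\,ds = \frac{1}{\ell+1}\trace\prth{\Kb_{\! \Ai_t}^{\ell+1}}.
\end{align*}
Since $s\mapsto \norm{\Hb(\Ai_s)^\ell\,\Ai_s}^2$ is non-negative and integrable near $+\infty$, the claim of the lemma is equivalent to $\trace\prth{\Kb_{\! \Ai_t}^{\ell+1}}\to +\infty$ as $t\to-\infty$.

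To prove this divergence, I would exploit the fact that the \emph{unrestricted} Airy kernel, namely the operator $\Kb_{\Ai}$ on $L^2(\Rr)$ with kernel $K_{\Ai}(x,y) := \int_0^\infty \Ai(x+u)\Ai(y+u)\,du$ (with $x,y\in\Rr$), is an orthogonal projection of infinite rank. The idempotence $\Kb_{\Ai}^2 = \Kb_{\Ai}$ is a direct consequence of the unitarity of the Airy transform, $\int_\Rr \Ai(z+u)\Ai(z+v)\,dz = \delta(u-v)$, and the range is clearly infinite-dimensional. A translation $x\mapsto x-t$ unitarily identifies our $\Kb_{\! \Ai_t}$ on $L^2(\Rr_+)$ with the compression $P_{[t,+\infty)}\Kb_{\Ai}P_{[t,+\infty)}$ of this projection. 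The Courant--Fischer min--max principle applied to the nested family $\{P_{[t,+\infty)}\}$ then shows that for every fixed $k\geq 1$, the $k$-th largest eigenvalue $\lambda_k(t)$ of $\Kb_{\! \Ai_t}$ is non-decreasing as $t$ decreases, with supremum equal to the $k$-th largest eigenvalue of $\Kb_{\Ai}$, namely $1$. For any $N$, this forces $\trace\prth{\Kb_{\! \Ai_t}^{\ell+1}} \geq \sum_{k=1}^N \lambda_k(t)^{\ell+1} \longrightarrow N$ as $t\to-\infty$, and letting $N$ be arbitrary yields the required divergence.

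The main delicate point is this last spectral convergence: the limit is a projection of \emph{infinite} rank, hence not itself trace class, so the usual norm- or Hilbert--Schmidt-convergence statements for compressions do not directly apply. The min--max argument circumvents this by working one eigenvalue at a time and only uses that $P_{[t,+\infty)}\to I$ strongly together with the elementary observation that $\Kb_{\Ai}\psi = \psi$ for every $\psi$ of the form $\Ai(\cdot + u)$ with $u>0$. An alternative route avoiding the projection identification is to verify the divergence directly from the kernel: expanding $\trace\prth{\Kb_{\! \Ai_t}^{\ell+1}}$ as an $(\ell+1)$-fold nested integral in Airy variables and sending $t\to-\infty$ reduces, by repeated application of the delta-function identity above to the inner $y_i$-integrals, to $\int_0^\infty du = +\infty$. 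Either route supplies the non-integrability required by the lemma.
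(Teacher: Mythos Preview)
Your argument is correct and genuinely different from the paper's. The paper proceeds by brute-force lower bounds: it treats $\ell=0,1,2m,2m+1$ separately, writes out the iterated kernels via the Christoffel--Darboux form \eqref{Def:Kernel:AiChristoffelDarboux}, and uses the asymptotics $\Ai(-x)^2\sim (\pi\sqrt{x})^{-1}\sin^2(\tfrac{\pi}{4}+\tfrac{2}{3}x^{3/2})$ to extract explicit growth rates $\norm{\Hb(\Ai_s)^\ell\Ai_s}^2=\Omega(|s|^{3m+1/2})$ (even $\ell$) or $\Omega(|s|^{3m+2})$ (odd $\ell$) as $s\to-\infty$. Your trace identity $\int_t^{+\infty}\norm{\Hb(\Ai_s)^\ell\Ai_s}^2\,ds=\tfrac{1}{\ell+1}\trace(\Kb_{\Ai_t}^{\ell+1})$ bypasses all of this: it handles every $\ell$ at once and reduces the question to the single spectral fact $\lambda_k(t)\to 1$ as $t\to-\infty$, which is exactly the statement $\Kb_{\Ai,-\infty}=I$ that the paper itself invokes (without proof) in its second approach, Theorem~\ref{Thm:TW2max:bis}. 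Your min--max justification of that limit, using that the unrestricted Airy kernel is an infinite-rank projection and that strong convergence of $P_{[t,+\infty)}$ is uniform on finite-dimensional subspaces, is clean and fills a gap the paper leaves implicit. What you lose relative to the paper is the quantitative rate in $|s|$; what you gain is a unified, conceptual argument that makes transparent the link between the first and second approaches of \S\ref{Sec:TW}.
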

\end{shaded}

\begin{proof}
For $ \blue{\ell = 0} $, one has for $ s < 0 $
\begin{align*}
\norm{  \Ai_s  }_{\! L^2(\Rr_+) }^2 & = \int_{\Rr_+} \Ai(s + x)^2 dx = \int_s^{+\infty} \Ai(x)^2 dx \geq  \int_s^0 \Ai(x)^2 dx =  \abs{s} \int_{-1}^0 \Ai(x\abs{s})^2 dx \\
                  & = \Omega( \sqrt{\abs{s}} )
\end{align*}
which shows that the function is not integrable in a neighbourhood of $ -\infty $.

To show that
\begin{align*}
C := \int_{\Rr_+} \Ai(x)^2 dx  < \infty  
\end{align*}
we have used the estimate \cite[(2.44) p. 14]{ValleeSoares}/\cite{KatoriTanemuraAiry}
\begin{align*}
\Ai(x) = O(x^{-1/4}e^{- \frac{2}{3} x^{3/2} }), \qquad x \to +\infty  
\end{align*}
and to show that
\begin{align*}
\int_{-1}^0 \Ai(x\abs{s})^2 dx = \Omega\prth{\frac{1}{\sqrt{\abs{s}} }}, \qquad s \to +\infty
\end{align*}
we have used \cite[(9.8.1) p. 199, (9.8.20), (9.8.21) p. 200]{NISThandbook} 
\begin{align*}
\Ai(-x) & := M(x) \sin\theta(x), \qquad M(x)^2 := \Ai(-x)^2 + \Bi(-x)^2, \qquad \theta(x) := \arctan\prth{ \frac{ \Ai(-x)}{\Bi(-x) } } \\
M(x)^2 & \sim \frac{1}{ \pi \sqrt{x} } \sum_{k \geq 0} \frac{ x^{-3k} }{k! } (-1)^k\frac{1 \cdot 3 \cdot 5 \cdots (6k - 1) }{96^k }, \quad \theta(x) \sim \frac{\pi}{4} + \frac{2}{3} x^{\frac{3}{2} } \prth{ 1 - \frac{5}{32} x^{-3} +  O(x^{-6}) }
\end{align*}
which gives in particular  
\begin{align}\label{Eq:Estimate:AiryMinusInf}
\Ai(-x)^2 \sim \frac{1}{\pi \, x^{1/2}} \sin\prth{ \frac{\pi}{4} + \frac{2}{3} x^{\frac{3}{2} } }^2, \qquad x \to +\infty
\end{align}

\medskip

For $ \blue{\ell = 1} $, one has
\begin{align*}
\norm{ \Hb(\Ai_s)  \Ai_s  }_{ L^2(\Rr_+) }^2 & = \int_{\Rr_+} \prth{ \int_{\Rr_+} \Ai_s(x + y) \Ai_s(y) dy }^{\!\! 2} dx   =: \int_{\Rr_+} K_{\! \Ai}(s + x, s)^2 dx
\end{align*}

One also has
\begin{align}\label{Def:Kernel:AiChristoffelDarboux}
K_{\! \Ai }(x, y) = \frac{ \Ai(x )\Ai'(y ) - \Ai(y )\Ai'(x ) }{x-y} \Unens{x \neq y} + (\Ai'(x)^2 - x \Ai(x)^2)\Unens{x = y}
\end{align}
and for $ x \neq y $, 
\begin{align*}
K_{\! \Ai }(x, y) & = - \Ai(x)\Ai(y) \frac{  \frac{\Ai'(y)}{\Ai(y)} - \frac{\Ai'(x)}{\Ai(x) } }{y - x} \\
                  & = - \Ai(x)\Ai(y) \int_0^1 \prth{ \frac{\Ai'}{\Ai} }'(t x + \overline{t} y) dt, \qquad \overline{t} := 1 - t
\end{align*}

Moreover, using $ \Ai''(x) = x\Ai(x) $, one gets
\begin{align*}
\prth{ \frac{\Ai'}{\Ai} }'\!\!\!(x) = \frac{\Ai''(x)\Ai(x) - \Ai'(x)^2}{\Ai(x)^2} = x - \prth{ \frac{\Ai'(x)}{\Ai(x)} }^{\! 2}
\end{align*}
hence
\begin{align*}
K_{\! \Ai }(x, y)  & = \Ai(x)\Ai(y) \prth{ - \frac{x + y}{2} + \int_0^1 \prth{ \frac{\Ai'}{\Ai}(t x + \overline{t} y) }^{\! 2} dt } \\
                 & =: \Ai(x)\Ai(y)  R_\Ai(x, y)  
\end{align*}
with
\begin{align*}
R_f(x, y) :=    - \frac{x + y}{2} + \int_0^1 \prth{ \frac{f'}{f}(t x + \overline{t} y) }^{\! 2} dt  
\end{align*}

This implies for $ s = - \abs{s} < 0 $
\begin{align*}
\norm{ \Hb(\Ai_s)  \Ai_s  }_{ L^2(\Rr_+) }^2 & = \int_{\Rr_+} K_{\! \Ai}(s + x, s)^2 dx \\
                 & = \int_{\Rr_+} \Ai(x + s)^2 \Ai(s)^2  R_\Ai(s + x, s)^2 dx =  \Ai(s)^2 \int_s^{+\infty } \Ai(X)^2   R_\Ai(X, s)^2 dX \\ 
                 & \geq \frac{\Ai(s)^2}{4} \int_s^{+\infty } \Ai(X)^2  \prth{ X + s }_-^2 dX = \frac{\Ai(s)^2}{4} \int_s^{-s} \Ai(X)^2  \prth{ X + s }^2 dX\\
                 & = \frac{\Ai(s)^2}{4}  \abs{s}^3  \int_{-1}^1 \Ai( \abs{s} u)^2  \prth{u - 1}^2 du \\
                 & \geq \frac{\Ai(s)^2}{4}  \abs{s}^3 \int_{-1}^0 \Ai( \abs{s}u)^2  \prth{1 - u}^2 du \\
                 & = \Omega\prth{  s^2 } \qquad\mbox{using \eqref{Eq:Estimate:AiryMinusInf}.}
\end{align*}
%
%
%
%
%
%

As a result, $ s \mapsto \norm{ \Hb(\Ai_s)   \Ai_s  }_{ L^2(\Rr_+) }^2 $ is not integrable on $ \Rr $.

\medskip\medskip 

In the general case, one has for $ \blue{\ell = 2m} $ and $ s < 0 $ 
\begin{align*}
\norm{ \Hb(\Ai_s)^{2m} \! \Ai_s  }_{ L^2(\Rr_+) }^2 & = \int_{\Rr_+} \prth{ \int_{\Rr_+} K_{\! \Ai_s}^{*m}(x , y ) \Ai_s(y) dy }^{\!\! 2} dx \\
                 & = \int_{\Rr_+}  \prth{ \int_{\Rr_+ \times \Rr_+^{m - 1} } K_{\! \Ai_s}(x, t_1) \prod_{j = 1}^{m - 2} K_{\! \Ai_s}(t_j, t_{j + 1})  \,  K_{\! \Ai_s} (t_{m - 1}, y) d\tb \, \Ai_s(y) dy }^{\!\! 2} dx \\
                 & = \int_{\Rr_+} \Ai_s(x)^2 \Bigg( \int_{\Rr_+ \times \Rr_+^{m - 1} } \prod_{j = 1}^{m - 2} \Ai_s(t_j)^2 \\
                 & \hspace{+3cm} \times R_{\! \Ai_s}(x, t_1) \prod_{j = 1}^{m - 2} R_{\Ai_s}(t_j, t_{j + 1})  \,  R_{\Ai_s} (t_{m - 1}, y) d\tb \, \Ai_s(y)^2  dy \Bigg)^{\!\! 2} dx \\
                 & \geq \frac{1}{2^m} \int_{\Rr_+} \Ai_s(x)^2 \Bigg( \int_{\Rr_+ \times \Rr_+^{m - 1} } \prod_{j = 1}^{m - 2} \Ai_s(t_j)^2 \\
                 & \hspace{+1cm} \times (2\abs{s} - x - t_1)_{\! +} \prod_{j = 1}^{m - 2} (2\abs{s} - t_j - t_{j + 1})_{\! +}  \,  (2\abs{s} - t_{m - 1} - y)_{\! +} d\tb \, \Ai_s(y)^2  dy \Bigg)^{\!\! 2} dx \\
                 & = \frac{1}{2^m} \int_{[s, +\infty)} \Ai(X)^2 \Bigg( \int_{[s, +\infty)^m } \prod_{j = 1}^{m - 2} \Ai(T_j)^2 \\
                 & \hspace{+1cm} \times ( X + T_1)_{\! -} \prod_{j = 1}^{m - 2} (T_j + T_{j + 1})_{\! -}  \,  ( T_{m - 1} + Y)_{\! -} d\Tb \, \Ai (Y)^2  dY \Bigg)^{\!\! 2} dX \\
                 & \geq \frac{\abs{s}^{4m + 1}}{2^m} \int_{[-1, 0)} \Ai(\abs{s}u)^2 \Bigg( \int_{[-1, 0)^m } \prod_{j = 1}^{m - 2} \Ai(\abs{s}\tau_j)^2 \\
                 & \hspace{+1cm} \times ( u + \tau_1)_{\! -} \prod_{j = 1}^{m - 2} (\tau_j + \tau_{j + 1})_{\! -}  \,  ( \tau_{m - 1} + v)_{\! -}  \, \Ai (\abs{s}v)^2  dv \, d\taub \Bigg)^{\!\! 2} du \\ 
                 & = \Omega\prth{ \abs{s}^{3m + 1/2} } \qquad \mbox{using \eqref{Eq:Estimate:AiryMinusInf}. }
\end{align*}

\medskip 

And for $ \blue{\ell = 2m + 1} $ and $ s < 0 $, 
\begin{align*}
\!\!\! \norm{ \Hb(\Ai_s)^{2m + 1} \! \Ai_s  }_{ L^2(\Rr_+) }^2 & = \int_{\Rr_+} \prth{ \int_{\Rr_+}    K_{\! \Ai_s}^{*m} * H_{\! \Ai_s}(x , y ) \Ai_s(y) dy }^{\!\! 2} dx \\
                 & = \int_{\Rr_+}  \prth{ \int_{\Rr_+ \times \Rr_+^m } K_{\! \Ai_s}(x, t_1) \prod_{j = 1}^{m - 1} K_{\! \Ai_s}(t_j, t_{j + 1})  \,  H_{\! \Ai_s} (t_m, y) d\tb \, \Ai_s(y) dy }^{\!\! 2} dx \\ 
                 & = \int_{\Rr_+}  \prth{ \int_{ \Rr_+^m } K_{\! \Ai_s}(x, t_1) \prod_{j = 1}^{m - 1} K_{\! \Ai_s}(t_j, t_{j + 1})  \,  K_{\! \Ai_s} (t_m  , 0) d\tb  }^{\!\! 2} dx \\ 
                 & =  \Ai_s(0)^2 \int_{\Rr_+} \Ai_s(x)^2 \Bigg( \int_{ \Rr_+^m } \prod_{j = 1}^m \Ai_s(t_j)^2  \\
                 & \hspace{+3cm} \times R_{\! \Ai_s}(x, t_1) \prod_{j = 1}^{m - 1} R_{\Ai_s}(t_j, t_{j + 1})  \, R_{\! \Ai_s}(t_m , 0) d\tb   \Bigg)^{\!\! 2} dx \\
                 & \geq \frac{\Ai_s(0)^2 }{2^m} \int_{\Rr_+} \Ai_s(x)^2 \Bigg( \int_{ \Rr_+^m } \prod_{j = 1}^m \Ai_s(t_j)^2   \\
                 & \hspace{+3cm} \times (2\abs{s} - x - t_1)_{\! +} \prod_{j = 1}^m (2\abs{s} - t_j - t_{j + 1})_{\! +}  \,  (2\abs{s} - t_m )_{\! +} d\tb \Bigg)^{\!\! 2} dx \\
                 & = \frac{\Ai(s)^2}{2^m} \int_{[s, +\infty)} \Ai(X)^2 \Bigg( \int_{[s, +\infty)^m } \prod_{j = 1}^m \Ai(T_j)^2 \\
                 & \hspace{+3cm} \times ( X + T_1)_{\! -} \prod_{j = 1}^{m - 1} (T_j + T_{j + 1})_{\! -}  \,  (\abs{s} - T_m)_{\! +} d\Tb   \Bigg)^{\!\! 2} dX \\
                 & \geq \frac{\abs{s}^{4m + 3}}{2^m} \Ai(-\abs{s})^2 \int_{[-1, 0)} \Ai(\abs{s}u)^2 \Bigg( \int_{[-1, 0)^m } \prod_{j = 1}^m \Ai(\abs{s}\tau_j)^2 \\
                 & \hspace{+4cm} \times ( u + \tau_1)_{\! -} \prod_{j = 1}^{m - 1} (\tau_j + \tau_{j + 1})_{\! -}  \,  ( 1 - \tau_m)_{\! +}  \,  d\taub \Bigg)^{\!\! 2} du \\ 
                 & = \Omega\prth{ \abs{s}^{3m + 2} } \qquad \mbox{using \eqref{Eq:Estimate:AiryMinusInf}. }
\end{align*}

This concludes the proof.
\end{proof}

\medskip
\subsection{Second approach}\label{SubSec:TW:SecondApproach}

\subsubsection{Prerequisites}

Before proving theorem~\ref{Thm:TW2max}, we introduce a slight variation of \cite[lem. 1]{Fuchs}~:

\begin{shaded}
\begin{lemma}[Fixed interval Fuchs lemma]\label{Lemma:Fuchs:Fixed}
For $ I \subset \Rr $, let $ \Kb_{\! t} : L^2(I) \to L^2(I) $ be self-adjoint ($ \Kb_{\! t}^* = \Kb_{\! t} $). Let $ (g_t, \lambda_t) $ be such that $ \Kb_{\! t} g_t = \lambda_t g_t $. Last, suppose that $ t \mapsto (\Kb_{\! t}, g_t, \lambda_t) $ is differentiable. Then,
\begin{align*}
\dot{\lambda}_t = \frac{\bracket{\dot{\Kb}_{\! t} g_t, g_t}_{\!\!L^2(I)}}{\norm{g_t}^2_{L^2(I)}}
\end{align*}
\end{lemma}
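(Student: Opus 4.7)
The plan is a standard Hellmann--Feynman style computation: differentiate the eigenvalue equation, pair with the eigenvector, and exploit self-adjointness to kill the term involving $\dot g_t$.

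First I would differentiate the identity $\Kb_{\! t} g_t = \lambda_t g_t$ in $t$, which (by the assumed joint differentiability of $t\mapsto(\Kb_{\!t},g_t,\lambda_t)$) gives
\[
\dot{\Kb}_{\! t} g_t + \Kb_{\! t} \dot g_t \;=\; \dot\lambda_t\, g_t + \lambda_t\, \dot g_t.
\]
Then I would take the $L^2(I)$ inner product of both sides with $g_t$ to obtain
\[
\bracket{\dot{\Kb}_{\! t} g_t, g_t}_{\!L^2(I)} + \bracket{\Kb_{\! t}\dot g_t, g_t}_{\!L^2(I)} \;=\; \dot\lambda_t \norm{g_t}^2_{L^2(I)} + \lambda_t \bracket{\dot g_t, g_t}_{\!L^2(I)}.
\]

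The key step is to use self-adjointness of $\Kb_{\!t}$ to rewrite the second term on the left: $\bracket{\Kb_{\! t}\dot g_t, g_t} = \bracket{\dot g_t, \Kb_{\! t} g_t} = \bracket{\dot g_t, \lambda_t g_t} = \lambda_t \bracket{\dot g_t, g_t}$, where the last equality uses that $\lambda_t \in \Rr$ since $\Kb_{\!t}$ is self-adjoint (so $\bar\lambda_t = \lambda_t$ can be pulled out of the second slot of a sesquilinear pairing without any sign issue). The two terms involving $\dot g_t$ therefore cancel, leaving
\[
\bracket{\dot{\Kb}_{\! t} g_t, g_t}_{\!L^2(I)} = \dot\lambda_t \norm{g_t}^2_{L^2(I)},
\]
from which the claimed formula follows after dividing by $\norm{g_t}^2_{L^2(I)}$ (which is nonzero since $g_t$ is an eigenvector).

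There is essentially no serious obstacle here: the entire content is the cancellation of the $\dot g_t$ contribution, which is why self-adjointness is the only structural hypothesis needed. The only mild point of care is the conjugate-linearity convention of the inner product, handled above by invoking reality of $\lambda_t$. Compared with Fuchs' original lemma in \cite{Fuchs}, where the domain itself varies with $t$ and an extra boundary contribution appears, here the fixed domain $I$ means no such boundary term is produced, which is exactly what the name ``fixed interval'' in the statement emphasizes.
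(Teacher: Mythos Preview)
Your proof is correct and follows essentially the same route as the paper: differentiate the eigenvalue equation, pair with $g_t$, and use self-adjointness to show that $\bracket{\Kb_{\!t}\dot g_t,g_t}-\lambda_t\bracket{\dot g_t,g_t}$ vanishes. The paper writes this cancellation as $\bracket{\dot g_t,(\Kb_{\!t}-\lambda_t)g_t}=0$ rather than invoking the reality of $\lambda_t$ explicitly, but the argument is the same.
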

\end{shaded}


\begin{proof}
Differentiating $ \Kb_{\! t} g_t = \lambda_t g_t $ yields
\begin{align*}
\dot{\Kb}_{\! t} g_t + \Kb_{\! t} \dot{g}_t = \dot{\lambda}_t g_t + \lambda_t \dot{g}_t
\end{align*}

Taking the scalar product with $ g_t $ and dropping the index $ L^2(I) $ then yields
\begin{align*}
\bracket{\dot{\Kb}_{\! t} g_t, g_t} + \bracket{\Kb_{\! t} \dot{g}_t, g_t} = \dot{\lambda}_t  \norm{g_t}^2 + \lambda_t \bracket{\dot{g}_t, g_t}
\end{align*}

But 
\begin{align*}
\delta_t & :=  \bracket{\Kb_{\! t} \dot{g}_t, g_t} - \lambda_t \bracket{\dot{g}_t, g_t} \\
              & = \bracket{\dot{g}_t,\Kb_{\! t}^* g_t} - \lambda_t \bracket{\dot{g}_t, g_t} \\
              & = \bracket{\dot{g}_t, (\Kb_{\! t} - \lambda_t) g_t} \qquad \mbox{since }  \Kb_{\! t}^* = \Kb_{\! t}  \\
              & = 0
\end{align*}
hence the result.
\end{proof}

\medskip
\subsubsection{Proof of theorem~\ref{Thm:TW2max}}

$ $

\begin{shaded}
\begin{theorem}[Max-independence structure in $ \TW_2 $, bis repetita]\label{Thm:TW2max:bis}
One has
\begin{align}\label{EqMax:TW2:bis}
\boxed{\TW_2 \eqlaw \max_{k \geq 0} Z_k'(\Ai)  }  
\end{align}
%
%
where $ (Z_k'(\Ai))_{k \geq 1} $ are independent random variables with density
\begin{align}\label{Def:Law:PsiSquare:TW2bis}
f_{Z_k'(\Ai)}(s) := \bracket{ \Ai_s, \Psi_k^{(s)} }^{\!\!2}_{\!\! L^2(\Rr_+)}, \qquad \norm{ \Psi_k^{(s)} }^2_{  L^2(\Rr_+)} = 1
\end{align}
where $ \Psi_k^{(s)} $ is the $ k $-th eigenvector of $ \Hb(\Ai_s) $ on $ L^2(\Rr_+) $ or equivalently the eigenvector of the \textit{Tracy-Widom commuting operator} on $ L^2(\Rr_+) $ given by 
\begin{align}\label{Def:Operators:TracyWidomCommuting}
\Le_{TW, s} := DXD - X(X + s) : f(x) \mapsto \frac{d}{dx}\prth{ x \frac{df}{dx}} - x(x + s) f(x)
\end{align}
\end{theorem}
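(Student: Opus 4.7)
The plan is to identify each $Z_k'(\Ai)$ with the (monotone) spectral object $1 - \mu_k(s)^2$, where $(\mu_k(s))_{k\ge 0}$ are the eigenvalues of the self-adjoint Hankel operator $\Hb(\Ai_s)$, and then recognise its derivative via Lemma~\ref{Lemma:Fuchs:Fixed}. Using~\eqref{Eq:SquareHankel:Airy}, the Fredholm determinant in~\eqref{EqTW:detFredholm} factorises as
\begin{align*}
\Prob{\TW_2 \leq s} = \det\prth{I - \Hb(\Ai_s)^2}_{\! L^2(\Rr_+)} = \prod_{k \geq 0} \prth{1 - \mu_k(s)^2}.
\end{align*}
One checks that each factor $s \mapsto 1 - \mu_k(s)^2$ is a valid cumulative distribution function: it is continuous, it takes values in $[0,1]$ (since $\mu_k(s)^2$ is an eigenvalue of the trace-class operator $\Hb(\Ai_s)^2 \preceq I$), it tends to $1$ as $s \to +\infty$ (because $\Hb(\Ai_s)^2 \to 0$ in trace norm, an estimate already implicit in the proof of Lemma~\ref{Lemma:ExistenceZkAi}) and to $0$ as $s \to -\infty$. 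Monotonicity will follow a posteriori from the sign of the derivative computed below.

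The key computation is the identification of the density. The operator $\Kb_{\!\Ai_s} = \Hb(\Ai_s)^2$ is self-adjoint on $L^2(\Rr_+)$ and shares its eigenvectors with $\Hb(\Ai_s)$: if $\Psi_k^{(s)}$ is the $L^2$-normalised eigenvector of $\Hb(\Ai_s)$ for $\mu_k(s)$, then it is the eigenvector of $\Kb_{\!\Ai_s}$ for the eigenvalue $\mu_k(s)^2$. Specialising~\eqref{Eq:Operator:DotHsquare} to $\phi = \psi = \Ai$ gives $\dot{\Kb}_{\!\Ai_s} = -\Ai_s \otimes \Ai_s^*$, so applying Lemma~\ref{Lemma:Fuchs:Fixed} with $I = \Rr_+$ yields
\begin{align*}
\frac{d}{ds}\mu_k(s)^2 = \bracket{\dot{\Kb}_{\!\Ai_s}\, \Psi_k^{(s)},\, \Psi_k^{(s)}}_{\!L^2(\Rr_+)} = -\,\bracket{\Ai_s,\Psi_k^{(s)}}_{\!L^2(\Rr_+)}^{\,2}.
\end{align*}
Hence $\frac{d}{ds}\prth{1 - \mu_k(s)^2} = \bracket{\Ai_s,\Psi_k^{(s)}}^2_{\!L^2(\Rr_+)}$, which is exactly~\eqref{Def:Law:PsiSquare:TW2bis}. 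This simultaneously proves monotonicity of $1 - \mu_k(s)^2$ and confirms that $f_{Z_k'(\Ai)}$ integrates to $1$ over $\Rr$. Introducing independent random variables $(Z_k'(\Ai))_{k\ge 0}$ with these marginals then gives
\begin{align*}
\Prob{\TW_2 \leq s} = \prod_{k \geq 0} \Prob{Z_k'(\Ai) \leq s} = \Prob{\max_{k \geq 0} Z_k'(\Ai) \leq s},
\end{align*}
establishing~\eqref{EqMax:TW2:bis}.

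For the final assertion identifying $\Psi_k^{(s)}$ with eigenvectors of the Tracy--Widom commuting operator $\Le_{TW,s} = DXD - X(X+s)$, one invokes the classical commutation identity $[\Le_{TW,s}, \Kb_{\!\Ai_s}] = 0$ on $L^2(\Rr_+)$ (originally~\cite[IV-B p.~165]{TracyWidomCMP}, recalled in Annex~\ref{Annex:PSWF}). Since $\Le_{TW,s}$ is a second-order Sturm--Liouville operator whose eigenvalues are simple, its eigenvectors diagonalise any commuting self-adjoint operator, in particular $\Kb_{\!\Ai_s}$, so up to normalisation the eigenvectors coincide.

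The main delicate point is the applicability of Lemma~\ref{Lemma:Fuchs:Fixed}: one must justify that $s \mapsto (\Kb_{\!\Ai_s}, \mu_k(s)^2, \Psi_k^{(s)})$ is differentiable and that the eigenvalues $\mu_k(s)^2$ are simple (otherwise the eigenvector is not unique). Simplicity follows from the commutation with $\Le_{TW,s}$, whose spectrum is simple by general Sturm--Liouville theory; differentiability is a consequence of analytic perturbation theory applied to the norm-analytic family $s \mapsto \Kb_{\!\Ai_s}$, whose smoothness in $s$ follows from the uniform Airy decay used in Lemma~\ref{Lemma:ExistenceZkAi}. These verifications are the technical heart of the argument; once they are in place, the spectral computation above is immediate.
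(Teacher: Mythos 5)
Your proposal is correct and follows essentially the same route as the paper: factorise the Fredholm determinant over the eigenvalues of $ \Hb(\Ai_s)^2 $ (the paper invokes the Lidskii formula), apply the fixed-interval Fuchs lemma with $ \dot{\Kb}_{\!\Ai, s} = -\Ai_s \otimes \Ai_s^* $ to identify the density $ \bracket{\Ai_s, \Psi_k^{(s)}}^2_{L^2(\Rr_+)} $, use the boundary behaviour $ \Kb_{\!\Ai, +\infty} = 0 $, $ \Kb_{\!\Ai, -\infty} = I $ to turn each factor into a cumulative distribution function, and defer the identification of $ \Psi_k^{(s)} $ with eigenvectors of $ \Le_{TW, s} $ to the Annex. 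Your explicit attention to differentiability of $ s \mapsto (\Kb_{\!\Ai_s}, \mu_k(s), \Psi_k^{(s)}) $ and to simplicity of the spectrum is a useful supplement to what the paper leaves implicit, but it does not change the argument.
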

\end{shaded}


\medskip
\begin{proof} 
Recall the Lidskii formula for trace-class operators \cite[thm. 3.12.2]{SimonOperatorTh} 
\begin{align}\label{Eq:Fredholm:Lidskii}
\det(I - \Kb)_{L^2(\Rr_+) } = \prod_{k \geq 1} (1 - \lambda_k), \qquad \Kb\psi_k = \lambda_k \psi_k
\end{align}

Applying this formula for $ \Hb(\Ai_s)^2 $ does not immediately give the result as one does not have any information on the dependency in $s$ of the $k$-th eigenvalue $ \lambda_k(s) $.

Nevertheless, for a normalised eigenvector $g_t$ ($ \norm{g_t}_{L^2(\Rr_+)}^2 = 1$) associated with an eigenvalue $ \lambda(t) $, one gets
\begin{align*}
\dot{\lambda}(t) & = \bracket{ \dot{\Kb}_{\!\Ai, t} g_t, g_t}_{L^2(\Rr_+)} = -\bracket{\Ai_t \otimes \Ai_t^* g_t, g_t}_{L^2(\Rr_+)} = - \bracket{\Ai_t  , g_t}_{L^2(\Rr_+)}^2
\end{align*}
as, for $ \Kb_{\!\Ai, t} = \Hb(\Ai_s)^2 \ActsOn L^2(\Rr_+) $, one has $ \dot{\Kb}_{\!\Ai, t} = -\Ai_t \otimes \Ai_t^* $ using \eqref{Eq:Operator:DotHsquare}.

Moreover, $ \Kb_{\!\Ai, +\infty} = 0 $ and $ \Kb_{\!\Ai, -\infty} = I $ hence $ \lambda_k(\infty) = 0 $ and $ \lambda_k(-\infty) = 1 $ and
\begin{align*}
& \lambda(t) = \blue{-}\int_t^{+\infty} \dot{\lambda}(s) ds = \blue{+}\int_t^{+\infty} \bracket{ \Ai_s , g_s }_{L^2(\Rr_+)}^2  ds, \\
& \int_\Rr \bracket{ \Ai_s , g_s }_{L^2(\Rr_+)}^2  ds  = 1
\end{align*}


The Lidskii formula then implies
\begin{align*}
\det\prth{I - \Hb(\Ai_t)^2}_{L^2(\Rr_+) } & = \prod_{k \geq 1} (1 - \lambda_k(t)) \\
                & = \prod_{k \geq 1} \prth{ 1 - \int_t^{+\infty} \bracket{\Ai_s  , \Psi_s^{(k)}}_{L^2(\Rr_+)}^2 ds } \\
                & = \prod_{k \geq 1} \prth{   \int_{-\infty}^t  \bracket{\Ai_s  , \Psi_s^{(k)}}_{L^2(\Rr_+)}^2 ds }  \\
                & = \prod_{k \geq 1} \prth{   \int_{-\infty}^t  f_{Z_k'(\Ai)}(s) ds }\\
                & = \Prob{\max_{k \geq 1} Z_k'(\Ai) \leq t} 
\end{align*}
which gives the result.

It remains to give a short description of the eigenvectors $ (\Psi_k^{(s)})_{k \geq 1} $ which are the eigenvectors of $ \Le_{TW, s} $, i.e. a particular rescaling of the \textit{prolate hyperspheroidal wave functions}. This fact is proven in Annex~\ref{Annex:PSWF}.
\end{proof}

\medskip

\begin{remark}
An interesting question would be to prove (or disprove) that $ Z_k(\Ai) \eqlaw Z_k'(\Ai) $. A priori, max-independence structures are not unique since partitioning the set on which the maximum is taken gives again such a structure~; one could have for instance $ Z_k(\Ai) \eqlaw \max_{j \in A_i} Z_j'(\Ai) $ for some good sets $ (A_i)_i $ that partition $ \Nn $ (or the contrary, i.e. $ Z_k'(\Ai) $ expressed with $ Z_k(\Ai) $). The existence of a ``minimal'' decomposition could thus also be asked.
\end{remark}

\medskip
\subsubsection{Another expression of $ Z_k'(\Ai) $}

$ $

\begin{shaded}
\begin{lemma}[Another expression of the law of $ Z_k'(\Ai) $]\label{Lemma:TW2max:ter}
One also has
\begin{align}\label{Def:Law:PsiSquare:TW2ter}
\Prob{ Z_k'(\Ai) > s } := \exp\prth{ -\int_{-\infty}^s \Psi_k^{(t)}(0)^2 dt } , \qquad \norm{ \Psi_k^{(s)} }^2_{  L^2(\Rr_+) } = 1
\end{align}
\end{lemma}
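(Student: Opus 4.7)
The strategy is to connect the density $f_{Z_k'(\Ai)}(s) = \bracket{\Ai_s, \Psi_k^{(s)}}^2_{L^2(\Rr_+)}$ appearing in \eqref{Def:Law:PsiSquare:TW2bis} with the quantity $\Psi_k^{(s)}(0)^2$ via the eigenvalue equation evaluated at the origin, and then to recognise a separable ODE for $\lambda_k(s)$.

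First I would observe that since $\Kb_{\Ai_s} = \Hb(\Ai_s)^2$ by \eqref{Eq:SquareHankel:Airy}, the family $(\Psi_k^{(s)})_{k \geq 1}$ is also a system of eigenvectors of the Hankel operator $\Hb(\Ai_s)$ itself. Writing $\Hb(\Ai_s) \Psi_k^{(s)} = \mu_k(s) \Psi_k^{(s)}$, we have $\mu_k(s)^2 = \lambda_k(s)$. Evaluating the eigenvalue equation at $x=0$ against the Hankel kernel $(x,y) \mapsto \Ai_s(x+y)$ yields
\begin{align*}
\mu_k(s) \Psi_k^{(s)}(0) = \int_{\Rr_+} \Ai_s(y) \Psi_k^{(s)}(y) \, dy = \bracket{\Ai_s, \Psi_k^{(s)}}_{\! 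L^2(\Rr_+)}.
\end{align*}
Squaring both sides gives the key identity
\begin{align*}
\bracket{\Ai_s, \Psi_k^{(s)}}_{\! L^2(\Rr_+)}^2 = \lambda_k(s) \, \Psi_k^{(s)}(0)^2.
\end{align*}

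Second, I would combine this with the computation performed in the proof of theorem~\ref{Thm:TW2max:bis}, namely $\dot{\lambda}_k(s) = -\bracket{\Ai_s, \Psi_k^{(s)}}_{\! L^2(\Rr_+)}^2$, to obtain the autonomous differential identity
\begin{align*}
\frac{\dot{\lambda}_k(s)}{\lambda_k(s)} = -\Psi_k^{(s)}(0)^2, \qquad \text{i.e.} \qquad \frac{d}{ds} \log \lambda_k(s) = -\Psi_k^{(s)}(0)^2.
\end{align*}
Using the boundary condition $\lambda_k(-\infty) = 1$ (recalled in the proof of theorem~\ref{Thm:TW2max:bis} from $\Kb_{\Ai, -\infty} = I$) and integrating from $-\infty$ to $s$ yields
\begin{align*}
\lambda_k(s) = \exp\!\left( -\int_{-\infty}^s \Psi_k^{(t)}(0)^2 \, dt \right).
\end{align*}
Finally, since $\Prob{Z_k'(\Ai) \leq s} = \int_{-\infty}^s f_{Z_k'(\Ai)}(u)\, du = 1 - \lambda_k(s)$ (established inside the proof of theorem~\ref{Thm:TW2max:bis}), we have $\Prob{Z_k'(\Ai) > s} = \lambda_k(s)$, which is exactly \eqref{Def:Law:PsiSquare:TW2ter}.

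The only subtle point is the boundary behaviour of the eigenvalues at $-\infty$: one has to know that each $\lambda_k(s)$ is strictly positive and tends to $1$ as $s \to -\infty$, so that $\log \lambda_k$ is well-defined on $\Rr$ and the integration of the separable ODE is legitimate. This is the main obstacle, but it follows from the facts that $\lambda_k(s) \in (0,1)$ for all $s$ (the operator $\Hb(\Ai_s)^2$ is a strict contraction on $L^2(\Rr_+)$) and that $\lambda_k(s) \to 1$ as $s \to -\infty$, both of which are already used implicitly in the proof of theorem~\ref{Thm:TW2max:bis}. Beyond this, the argument is purely a short algebraic manipulation, and the sign ambiguity in $\mu_k(s) = \pm\sqrt{\lambda_k(s)}$ disappears upon squaring.
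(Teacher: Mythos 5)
Your proof is correct, but it takes a genuinely different route from the paper's. The paper translates the problem to the operator $\Kb_{\!\Ai}$ acting on the varying interval $L^2([s,+\infty))$ and invokes the varying-interval Fuchs lemma~\ref{Lemma:Fuchs:Varying}, which yields $\dot{\lambda}_k(s)=-\lambda_k(s)\,\widetilde{\Psi}_k^{(s)}(s)^2$ directly; the identification $\widetilde{\Psi}_k^{(s)}(s)=\Psi_k^{(s)}(0)$ and $\widetilde{\lambda}_k=\lambda_k$ then comes from the change of variables. You instead stay on the fixed interval $L^2(\Rr_+)$, reuse the derivative $\dot{\lambda}_k(s)=-\bracket{\Ai_s,\Psi_k^{(s)}}_{L^2(\Rr_+)}^2$ already computed in the proof of theorem~\ref{Thm:TW2max:bis}, and convert it into the same separable ODE through the Hankel eigen-equation evaluated at the origin, $\bracket{\Ai_s,\Psi_k^{(s)}}_{L^2(\Rr_+)}=\mu_k(s)\Psi_k^{(s)}(0)$ with $\mu_k(s)^2=\lambda_k(s)$. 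This is legitimate here because theorem~\ref{Thm:TW2max:bis} already takes $\Psi_k^{(s)}$ to be an eigenvector of $\Hb(\Ai_s)$ itself (not merely of its square), $\mu_k(s)\neq 0$, and the eigenfunctions are continuous so pointwise evaluation at $0$ makes sense; the boundary facts you flag, $\lambda_k(s)\in(0,1)$ and $\lambda_k(-\infty)=1$, are exactly the ones the paper also relies on ($\Kb_{\!\Ai,-\infty}=I$, $\Kb_{\!\Ai,+\infty}=0$, cf. remark~\ref{Rk:SpecialFuchs}). What your route buys: it dispenses with the varying-interval Fuchs lemma and the translated kernel altogether, and it makes explicit the compatibility of the two descriptions of the law, namely $f_{Z_k'(\Ai)}(s)=\bracket{\Ai_s,\Psi_k^{(s)}}_{L^2(\Rr_+)}^2=\lambda_k(s)\,\Psi_k^{(s)}(0)^2$, i.e. density $=$ survival function $\times\,\Psi_k^{(s)}(0)^2$, which is precisely the differentiated form of \eqref{Def:Law:PsiSquare:TW2ter}. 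What the paper's route buys: it never uses the square-of-Hankel structure, so it applies verbatim to any self-adjoint operator with a fixed kernel restricted to a varying half-line (a flexibility exploited elsewhere, e.g. in remark~\ref{Rk:SpecialFuchs} and in the discrete analogue of \S~\ref{SubSec:SchurMeasure:Fuchs}), whereas your evaluation at $x=0$ is specific to operators of the form $\Hb(\phi_s)^2$.
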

\end{shaded}

Before proving lemma~\ref{Lemma:TW2max:ter}, we recall the original version of \cite[lem. 1]{Fuchs}~:

\begin{shaded}
\begin{lemma}[Varying interval Fuchs lemma]\label{Lemma:Fuchs:Varying}
Let $ I_t := [t, +\infty) \subset \Rr $. Let $ \Kb \equiv \Kb_{\! t} : L^2(I_t) \to L^2(I_t) $ be a self-adjoint ($ \Kb_{\! t}^* = \Kb_{\! t} $) such that the kernel $K$ of $ \Kb $ does not depend on $t$. Let $ (g_t, \lambda_t) $ be such that $ \Kb_{\! t} g_t = \lambda_t g_t $. Last, suppose that $ t \mapsto (g_t, \lambda_t) $ is differentiable. Then,
\begin{align*}
\dot{\lambda}_t = -\lambda_t \frac{ g_t(t)^2 }{\norm{g_t}^2_{L^2(I_t)}}
\end{align*}
\end{lemma}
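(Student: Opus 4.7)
The plan is to run the same calculation as in Lemma~\ref{Lemma:Fuchs:Fixed}, but now account for the fact that the domain $I_t$ varies with $t$, so differentiating an integral over $I_t$ produces a boundary contribution via the Leibniz rule. First I differentiate the eigenvalue identity
\begin{align*}
\int_t^{+\infty} K(x, y) g_t(y) \, dy = \lambda_t \, g_t(x), \qquad x \geq t,
\end{align*}
with respect to $t$ at fixed $x$. Because the kernel $K$ itself does not depend on $t$, Leibniz yields
\begin{align*}
-K(x, t) g_t(t) + \int_t^{+\infty} K(x, y) \, \dot{g}_t(y) \, dy = \dot{\lambda}_t \, g_t(x) + \lambda_t \, \dot{g}_t(x).
\end{align*}

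Next I take the $L^2(I_t)$-inner product of both sides against $g_t$. Using the symmetry $K(x, t) = K(t, x)$ equivalent to self-adjointness, the boundary contribution becomes
\begin{align*}
-g_t(t) \int_t^{+\infty} K(t, x) g_t(x) \, dx = -g_t(t) \cdot (\Kb_{\! t} g_t)(t) = -\lambda_t \, g_t(t)^2,
\end{align*}
where I recognised the inner integral as $(\Kb_{\! t} g_t)(t)$ evaluated at the boundary of $I_t$. Exactly as in the proof of Lemma~\ref{Lemma:Fuchs:Fixed}, self-adjointness of $\Kb_{\! t}$ gives
\begin{align*}
\bracket{\Kb_{\! t} \dot{g}_t, g_t}_{\!L^2(I_t)} - \lambda_t \bracket{\dot{g}_t, g_t}_{\!L^2(I_t)} = \bracket{\dot{g}_t, (\Kb_{\! t} - \lambda_t) g_t}_{\!L^2(I_t)} = 0,
\end{align*}
so the $\dot{g}_t$ terms cancel between the two sides. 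What remains is $-\lambda_t \, g_t(t)^2 = \dot{\lambda}_t \, \norm{g_t}^2_{L^2(I_t)}$, which is the claimed formula.

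The main technical point to secure is the justification of the differentiation under the integral sign: one needs enough joint regularity of $(x, y, t) \mapsto K(x, y) g_t(y)$ for the Leibniz rule to apply on the moving domain $[t, +\infty)$, together with a continuous representative of $g_t$ so that the boundary value $g_t(t)$ is well-defined and so that the eigenvalue equation can be evaluated pointwise at $x = t$. In all intended applications---where $\Kb_{\! t}$ is either $\Hb(\Ai_t)$ or $\Hb(\Ai_t)^2$ acting on $L^2(I_t)$, and $g_t$ is an eigenvector inherited from the Tracy-Widom commuting operator $\Le_{TW, t}$ of \eqref{Def:Operators:TracyWidomCommuting}---the kernel is smooth and rapidly decaying and the eigenfunctions $\Psi_k^{(t)}$ are smooth on $\Rr_+$, so these prerequisites are automatic and the statement applies without modification.
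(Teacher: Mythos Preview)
Your proof is correct and follows essentially the same route as the paper: differentiate the eigenvalue relation on the moving domain to produce the boundary term $-K(\cdot,t)g_t(t)$, then pair with $g_t$ and use self-adjointness to kill the $\dot g_t$ contributions, leaving $-\lambda_t g_t(t)^2=\dot\lambda_t\|g_t\|^2$. The only cosmetic difference is that the paper phrases the cancellation via $\im(\Kb_t-\lambda_t I)\perp\ker(\Kb_t-\lambda_t I)$ rather than writing out $\langle \dot g_t,(\Kb_t-\lambda_t)g_t\rangle=0$ directly, and it does not spell out the regularity caveats you add at the end.
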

\end{shaded}

For the reader's convenience, we give a proof~:

\begin{proof}
Differentiating $  \Kb_{\! t} g_t = \lambda_t g_t $ yields
\begin{align*}
-K(\cdot, t) g_t(t) + \Kb_{\! t} \dot{g}_t = \dot{\lambda}_t g_t + \lambda_t \dot{g}_t
\end{align*}

In particular
\begin{align*}
K(\cdot, t) g_t(t) + \dot{\lambda}_t g_t  =  \Kb_{\! t} \dot{g}_t - \lambda_t \dot{g}_t 
                 =  (\Kb_{\! t} - \lambda_t I ) \dot{g}_t  
                 &  \in \im( \Kb_{\! t} - \lambda_t I  ) \\
                 & \perp \ker(\Kb_{\! t} - \lambda_t I ) = \Rr g_t \quad\mbox{as }\Kb_{\! t}^* = \Kb_{\! t}.
\end{align*}

Taking the scalar product with $ g_t $ and dropping the index $ L^2(I_t) $ then yields
\begin{align*}
0 & = g_t(t) \bracket{K(\cdot, t) , g_t} + \dot{\lambda}_t \bracket{g_t, g_t} \\
              & = g_t(t) \Kb_{\! t}g_t(t) + \dot{\lambda}_t \norm{g_t}^2 \\
              & = \lambda_t g_t(t)^2 + \dot{\lambda}_t \norm{g_t}^2
\end{align*}
hence the result.
\end{proof}

\medskip
\begin{remark}\label{Rk:SpecialFuchs}
$ $

$ \bullet $ The original approach of Fuchs \cite{Fuchs} amounts to the identity $ \lambda_t \int_s^{+\infty} g_t g_s = \lambda_s \int_t^{+\infty} g_t g_s $ (with $ \int_t^{+\infty} g_t^2 = 1 $) which gives the result after division by $ t - s $ and the limit $ t \to s $.

$ \bullet $ The two versions of the Fuchs lemma~\ref{Lemma:Fuchs:Fixed} and~\ref{Lemma:Fuchs:Varying} can be set on the same page if one differentiates the kernel $  K_{ t}(x, y) = \Unens{x \geq t} K(x, y) $ in the sense of distributions (with a slight abuse of notation)~:
\begin{align*}
\bracket{  \dot{\Kb}_{\! t} g_t , g_t}  & = -\bracket{ \delta_0(t - \cdot) \Kb  g_t , g_t}  \\
                & = -\lambda_t \bracket{ \delta_0(t - \cdot)    g_t , g_t}  \\
                & = -\lambda_t g_t(t)^2
\end{align*}

$ \bullet $ If one supposes that $ \Kb_{\! t} $ has a parametric kernel and acts on $ L^2([t, +\infty)) $, one easily finds the following modification of lemmas~\ref{Lemma:Fuchs:Fixed} and \ref{Lemma:Fuchs:Varying}~:
\begin{align}\label{Eq:FuchsLemmaGeneralEquation}
\dot{\lambda}_t = -\lambda_t \frac{ g_t(t)^2 }{\norm{g_t}^2_{L^2(I_t)}} + \frac{\bracket{\dot{\Kb}_{\! t} g_t, g_t }_{\!\! L^2(I_t)} }{\norm{g_t}^2_{L^2(I_t)}}
\end{align}

$ \bullet $ A direct corollary of lemma~\ref{Lemma:Fuchs:Varying} is that for all $ a \in \Rr $
\begin{align*}
\lambda_s = \lambda_a \exp\prth{ -\int_a^s \frac{g_t(t)^2}{\norm{g_t}^2_{L^2(I_t) }} dt }
\end{align*}

If in addition $ \Kb_{-\infty} = I $ and $ \Kb_{+\infty} = 0 $, one can take $ a = -\infty $ to get 
\begin{align}\label{Eq:Fredholm:VPvaryingInterval}
\lambda_s = \exp\prth{ -\int_{-\infty}^s \frac{g_t(t)^2}{\norm{g_t}^2_{L^2(I_t) }} dt }, 
\qquad
\int_\Rr \frac{g_t(t)^2}{\norm{g_t}^2_{L^2(I_t) }} dt = +\infty 
\end{align}
in which case $ \lambda_s $ defines a random variable $ Z $ such that $ \lambda_s = \Prob{Z > s} $.
\end{remark}


\begin{proof}[Proof of lemma~\ref{Lemma:TW2max:ter}]
If the operator $ \Kb_{\! \Ai} \equiv \Kb_{\! \Ai_0} $ defined in \eqref{Def:Kernel:AiWithInt} acts on $ L^2([s, +\infty)) $, then, by an obvious change of variable~:
\begin{align*}
\det\prth{I - \Kb_{\! \Ai_s} }_{L^2(\Rr_+)} = \det\prth{I - \Kb_{\! \Ai} }_{L^2([s, +\infty))}
\end{align*}

The varying interval Fuchs lemma~\ref{Lemma:Fuchs:Varying} and the Lidskii formula \eqref{Eq:Fredholm:Lidskii} can then be combined to give 
\begin{align*}
\Prob{\TW_2 \leq s} & = \det\prth{I - \Kb_{\! \Ai} }_{ L^2([s, +\infty)) } \\
                & = \prod_{k \geq 1} (1 - \widetilde{\lambda}_k(s)) \\
                & = \prod_{k \geq 1} \prth{ 1 - \exp\prth{ -\int_{-\infty}^s \widetilde{\Psi}_k^{(t)}\! (t)^2 dt }  }  \qquad\mbox{with \eqref{Eq:Fredholm:VPvaryingInterval}.}
\end{align*}

Here, $ \widetilde{\Psi}_k^{(t)} $ is the $k$-th eigenvector of $ \Kb_{\! \Ai} \ActsOn L^2([t, +\infty)) $ or equivalently of $ \widetilde{\Le}_{TW, t} := D(X - t)D - X(X - t) $. It is clear that 
\begin{align*}
\int_{  [t, +\infty) } K(x, y) f(y) dy = \lambda f(x) 
\quad\Longleftrightarrow\quad
\int_{  \Rr_+ } K(X + t, Y + t) f(Y + t) dY = \lambda f(X + t)
\end{align*}

As a result, $\widetilde{\Psi}_k^{(t)} = \Psi_k^{(t)}(\cdot - t) $ and $\widetilde{\Psi}_k^{(t)}(t) = \Psi_k^{(t)}(0) $. One sees moreover that $ \widetilde{\lambda}_k(s) = \lambda_k(s) $, which proves that we still have an expression of the law of $ Z_k'(\Ai) $. This concludes the proof.
\end{proof}


\begin{remark}\label{Rk:MaxWithIIDs}
Define
\begin{align*}
\Phi_k(t) := \int_{-\infty}^t \Psi_k^{(s)}(0)^2 ds
\end{align*}

Then, writing $ \Phi_k\inv $ for the inverse bijection of $\Phi_k$, one has
\begin{align*}
Z_k'(\Ai) \eqlaw \Phi_k\inv(\ee), \qquad \ee \sim \Exp(1)
\end{align*}
and one has expressed $ \TW_2 \eqlaw \max_{k \geq 1} \Phi_k\inv(\ee_k) $ with an i.i.d. sequence $ (\ee_k)_{k \geq 1} $, answering the original question of \cite[prob. (15)]{AIMproblems}.
\end{remark}

\medskip
\subsection{Extension to operators $ \Hb(\phi_s) \Mg_f^2 \Hb(\phi_s) $}\label{SubSec:TW:HMHoperators}

\subsubsection{First approach}

One classical generalisation of the previous result is the case where $ K_{\! s} $ is the kernel of an operator $ \Kb_{\! s} $ acting on $ L^2(\Rr_+) $ that reads
\begin{align}\label{Def:Kernel:HankelMultHankel}
K_{\! s}(x, y) = \int_\Rr \phi_s(x + u) \phi_s(y + u) f(u)^2 du
\end{align}
with $  \phi_s f \in L^2(\Rr) $ for all $s$ (say). We also suppose that $ \Kb_{\! s} $ has all its eigenvalues in $ (0, 1) $ for all $s$. 

Defining the multiplication operator 
\begin{align}\label{Def:Operators:Multiplier}
\Mg_f : g \mapsto fg
\end{align}
the underlying operator reads
\begin{align}\label{Def:Operators:HankelMultHankel}
\Kb_{\! s} = \Hb(\phi_s) \Mg_f^2 \Hb(\phi_s) = \Lb_s \Lb_s^*, \qquad \Lb_s := \Hb(\phi_s) \Mg_f
\end{align}

The relevant modification of \eqref{Eq:Fredholm:HankelSquareWithExp} is 
\begin{align}\label{Eq:Fredholm:MultHankelSquareWithExp}
\det\prth{ I - \Hb(\phi_s) \Mg_f^2 \Hb(\phi_s)}_{\! L^2(\Rr_+) }  = \prod_{\ell \geq 0} \exp\prth{ -\int_t^{+\infty} \bracket{  (\Lb_s \Lb_s^*)^\ell \phi_s  , \phi_s f^2}_{ L^2(\Rr_+) }   ds }
\end{align}

Usually, one does not have $ \Lb_s \Lb_s^* =  \Lb_s^* \Lb_s $ hence cannot write this last scalar product as $ \norm{   \Lb_s^\ell \phi_s  }_{ L^2(\Rr_+) }^2 $, but the operator is clearly positive and one can define the square root $ \Mb_s := (\Lb_s \Lb_s^*)^{1/2} $ in which case one gets $ \norm{   \Mb_s^\ell \phi_s  }_{ L^2(\Rr_+) }^2 $. Such a square root is nevertheless not easily expressible as an explicit operator such as $ \Lb_s $.

\medskip

One classical such case is given by the value in $ x = 0 $ of the solution to the Stochastic Heat Equation (SHE) with Dirac initial condition \cite{AmirCorwinQuastel, BertiniGiacomin, BorodinGorinKPZ, CalabreseLeDoussalRosso, DotsenkoKPZ, SasamotoSpohn1, SasamotoSpohn2, SasamotoSpohn3} also called continuum random polymer. This equation reads
\begin{align}\label{Eq:SHE}
\begin{cases}
\partial_t u = \frac{1}{2} \Delta u + \dot{\xi} u \\
u(t = 0, \cdot) = u_0 \equiv \delta_0
\end{cases}
\end{align}

If one defines $ u(x, t) = e^{ h(x, t)} $, then, $h$ is the Hopf-Cole solution of the Kardar-Parisi-Zhang (KPZ) equation \cite{BertiniGiacomin}.

A key formula gives a determinantal structure for $ h(0, t) $ with $ u_0 = \delta_0 $ \cite{AmirCorwinQuastel, BorodinGorinKPZ, CalabreseLeDoussalRosso, DotsenkoKPZ, SasamotoSpohn1, SasamotoSpohn2, SasamotoSpohn3}~:
\begin{align}\label{Eq:DetInSHE}
\Prob{  \frac{h(0, t) + \frac{t}{24} }{\gamma_t} + \frac{\Gumbel(1)}{\gamma_t} \leq s  } = \det\prth{ I - \Kb_{\!\KPZ(s, t)} }_{L^2(\Rr_+)}, \qquad \gamma_t := \prth{ \frac{t}{2} }^{1/3}
\end{align}
where $ \Gumbel(1) $ is a Gumbel-distributed random variable independent of $ h(0, t) $ (for all $t$) and with\footnote{
In \cite{AmirCorwinQuastel, SasamotoSpohn1, SasamotoSpohn2, SasamotoSpohn3}, the factor $  \frac{du}{ 1 + e^{-\gamma_t (u-s) }} $ is replaced by $ \frac{du}{ 1 - e^{-\gamma_t (u-s) } } $ with a principal value for the integral. The form we give is from \cite{BorodinGorinKPZ, CalabreseLeDoussalRosso, DotsenkoKPZ} and \cite[ch. 5.4.3]{BorodinCorwinMacDo}.
}
\begin{align}\label{Def:Kernel:KPZ}
K_{\! \KPZ(s, t)}(x, y) := \int_\Rr \Ai_s(x + u)\Ai_s(y + u) \frac{du}{ 1 + e^{-\gamma_t u } }
\end{align}
which is exactly of the type \eqref{Def:Kernel:HankelMultHankel}, the underlying operator being thus of the type \eqref{Def:Operators:HankelMultHankel}. 

It was moreover remarked by Johansson \cite[prop. 1.4]{JohanssonGumbelTW} that the RHS is 
\begin{align*}
\Prob{ \lambdab_1^{(t)} \leq s } 
\end{align*}
where $ (\lambdab^{(t)}_k)_{ k \geq 1} $ is the point process given by the pointwise randomisation and re-ordering
\begin{align*}
(\lambdab^{(t)}_k)_{ k \geq 1} :\eqlaw (\lambdab_k + R_k/\gamma_t)^>_{k \geq 1}, \qquad (\lambdab_k)_{k \geq 1} \sim \PPD(K_{\! \Ai})
\end{align*}
with $ (R_k)_{k \geq 1} $ an i.i.d. sequence with law $ R_1 $ given by the \textit{Fermi factor}
\begin{align}\label{EqLaw:FermiFactor}
\Prob{R_1 \leq x} = F(x) :=  \frac{1}{1 + e^{- x} }  \qquad \Longleftrightarrow\qquad R_1 \eqlaw \Gumbel(1) - \Gumbel'(1)
\end{align}
where $ \Gumbel(1) $ and $ \Gumbel'(1) $ are two independent Gumbel-distributed random variables. As a result, 
\begin{align}\label{EqLaw:KPZwithMaxNonIndep}
\lambdab^{(t)}_1 \eqlaw \max_{k \geq 1}\ensemble{ \lambdab_k + R_k/\gamma_t}
\end{align}

This last equality in law is equivalent, after integration on the $ R_k $s, to the tensorial identity of Fredholm determinants obtained in \cite{AmirCorwinQuastel, BorodinGorinKPZ, SasamotoSpohn1, SasamotoSpohn2, SasamotoSpohn3} building on \cite{BertiniGiacomin, TWasep1, TWasep2, TWasep3}.

From this, one gets the equality in law
\begin{align*}
\frac{h(0, t) + \frac{t}{24} }{\gamma_t} + \frac{\Gumbel(1)}{\gamma_t} \eqlaw \lambdab_1^{(t)} \eqlaw \max_{k \geq 1}\ensemble{ \lambdab_k + R_k/\gamma_t}
\end{align*}
and using a slight modification of the proof of theorem~\ref{Thm:TW2max}, one gets the existence of independent random variables $ Z_k^{(t)}(\Ai) $ such that
\begin{align}\label{EqMax:KPZ}
\boxed{\lambdab_1^{(t)} \eqlaw \max_{k \geq 0}\ensemble{ Z_k^{(t)}(\Ai) }}
\end{align}
with
\begin{align*}
\Prob{Z_k^{(t)}(\Ai) \leq s} = \exp\prth{- \!\! \int_s^{+\infty} \!\! \bracket{  (\Hb(\Ai_u)\Mg_{f_t}\Hb(\Ai_u))^k \Ai_u  , \Ai_u f_t}_{ \! L^2(\Rr_+) } \! du \! } , \quad f_t(x) = F(\gamma_t x)
\end{align*}

\medskip

The modification of the proof of lemma~\ref{Lemma:ExistenceZkAi} goes as follows~: write
\begin{align*}
K_{\KPZ(s, t)}(x, y) & = \int_{\Rr } \Ai(x +u)\Ai(y + u ) \Prob{ R \leq \gamma_t (u - s) } du \\
              & = \Esp{ \int_{\Rr } \Ai(x +u)\Ai(y + u )  \Unens{ R/\gamma_t + s \leq u  }  du } \\
              & = \Esp{ \int_{s + \frac{R}{\gamma_t} }^{+\infty} \Ai(x +u)\Ai(y + u )   du } \\
              & = \Esp{ K_{\Ai, s + \frac{R}{\gamma_t}}(x, y) }
\end{align*}

As a result, 
\begin{align}\label{Eq:Kernel:KPZasRandomisedAiry}
\Kb_{\KPZ(s, t)}  = \Esp{ \Kb_{\Ai_{ s + R/\gamma_t } }} = \Esp{ \Hb(\Ai_{ s + R/\gamma_t })^2 }
\end{align}
and, setting $ R_t := \gamma_t\inv R_1 $ and $ \Kb_{\! s} \equiv \Kb_{\!\KPZ(s, t)}  $,
\begin{align*}
\det(I - \Kb_{\! s}) = \prod_{\ell \geq 0} \exp\prth{-\Esp{ \int_s^{+\infty}\bracket{ \Hb \prth{ \Ai_{ u + R_t^{(1)} } }^{\! 2} \cdots \Hb\prth{ \Ai_{ u + R_t^{(\ell)} } }^{\! 2} \, \Ai_{u + R_t} ,  \Ai_{u + R_t} }_{\!\! L^2(\Rr_+) }  du } }
\end{align*}

One can thus mimic the computations of lemma~\ref{Lemma:ExistenceZkAi} a.s. for the random times $ s + R_t^{(i)} $ and $ s \ll 0 $, which amounts to do it for $ s \ll 0 $, giving the same randomised estimates of $ \Omega(\abs{s}^m) $ with $s$ replaced by $ s + \min_\ell R_t^{(\ell)}  $, and finally taking the expectation at the end, which is allowed as $ R_t \eqlaw R/\gamma_t $ and $ R $ has moments at every order. Details are left to the reader.

\medskip
\subsubsection{Second approach}

Using the fact that
\begin{align*}
\det\prth{ I -  \Ab\Bd  }_{L^2(I)}  = \det\prth{ I - \Bd \Ab  }_{L^2(I)} 
\end{align*}
for all trace-class operators $ \Ab, \Bd \ActsOn L^2(I) $, one gets
\begin{align*}
\Prob{\lambdab_1^{\KPZ(t)} \leq s} & = \det\prth{ I - \Kb_{\KPZ(s, t)} }_{L^2(\Rr_+)} \\
                   & = \det\prth{ I - \Hb(\Ai_s) \Mg_{f_t^2} \Hb(\Ai_s) }_{L^2(\Rr_+)}, \qquad f_t(x) := \frac{1}{\sqrt{ 1 + e^{-\gamma_t x} }}, \ \gamma_t := \prth{\tfrac{t}{2}}^{\!\frac{1}{3}} \\
                   & = \det\prth{ I -  \Mg_{f_t} \Hb(\Ai_s)^2 \Mg_{f_t}  }_{L^2(\Rr_+)}  
\end{align*}

As
\begin{align*}
\frac{d}{ds}\Mg_{f_t} \Hb(\Ai_s)^2 \Mg_{f_t} & = \Mg_{f_t} \frac{d}{ds} \Hb(\Ai_s)^2 \Mg_{f_t}  = \Mg_{f_t} (\Ai_s \otimes \Ai_s^* ) \Mg_{f_t} \\
              & = (f_t \Ai_s) \otimes (f_t \Ai_s)^*
\end{align*}
the application of the Fuchs lemma~\ref{Lemma:Fuchs:Fixed} to $\Kb_{\!\KPZ(s, t)}$ gives 
\begin{align*}
\Prob{\lambdab_1^{\KPZ(t)} \leq s} & = \prod_{k \geq 1} \int_{-\infty}^s \bracket{ f_t \Ai_u, \Psi_k^{(u)} }^{\!2}_{\!\!L^2(\Rr_+)} du  
\end{align*}

We have thus proven the~:

\begin{shaded}
\begin{theorem}[Max-independence structure in the $\KPZ$ equation]\label{Thm:KPZmax}
Let $ (Z_k^{\KPZ(t)})_{k \geq 1} $ be an independent sequence of random variables with law given by the density
\begin{align}\label{Def:Law:PsiSquare:KPZ}
f_{ Z_k^{\KPZ(t)} }(s) := \bracket{ f_t \Ai_s, \Psi_k^{(s)}}_{\!\! L^2(\Rr_+)}^{\! 2} 
\end{align}

Then,
\begin{align}\label{EqMax:KPZ:bis}
\boxed{ \lambdab_1^{\KPZ(t)} \eqlaw \max_{k \geq 1} Z_k^{\KPZ(t)} }
\end{align}
\end{theorem}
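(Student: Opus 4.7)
The proof follows the second-approach template already established for Theorem~\ref{Thm:TW2max:bis}, and most of the algebraic manipulations are in fact done in the lines immediately preceding the statement. The plan is: bring the Fredholm determinant into a self-adjoint form with rank-one $s$-derivative, apply the fixed-interval Fuchs lemma~\ref{Lemma:Fuchs:Fixed} to express the eigenvalues as cumulative distribution functions, then factor via the Lidskii formula~\eqref{Eq:Fredholm:Lidskii}.

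First, I would record what the preliminaries already give: by cyclic invariance of the Fredholm determinant,
\begin{align*}
\Prob{\lambdab_1^{\KPZ(t)} \leq s} = \det(I - \Mb_s)_{L^2(\Rr_+)}, \qquad \Mb_s := \Mg_{f_t} \Hb(\Ai_s)^2 \Mg_{f_t},
\end{align*}
with $\Mb_s^{*} = \Mb_s$ and, from the derivative computation preceding the theorem, $\dot{\Mb}_s = -(f_t \Ai_s) \otimes (f_t \Ai_s)^{*}$. If $(\lambda_k(s), \Psi_k^{(s)})_{k \geq 1}$ denotes an $L^2(\Rr_+)$-normalised spectral decomposition of $\Mb_s$, Lemma~\ref{Lemma:Fuchs:Fixed} yields
\begin{align*}
\dot{\lambda}_k(s) = \bracket{\dot{\Mb}_s \Psi_k^{(s)}, \Psi_k^{(s)}}_{L^2(\Rr_+)} = -\bracket{f_t \Ai_s, \Psi_k^{(s)}}^{2}_{L^2(\Rr_+)}.
\end{align*}

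Second, I would establish the boundary conditions $\lambda_k(+\infty)=0$ and $\lambda_k(-\infty)=1$, so that integrating $\dot\lambda_k$ from $-\infty$ to $s$ produces
\begin{align*}
1 - \lambda_k(s) = \int_{-\infty}^{s} \bracket{f_t \Ai_u, \Psi_k^{(u)}}^{2}_{L^2(\Rr_+)}\, du = \int_{-\infty}^{s} f_{Z_k^{\KPZ(t)}}(u)\, du.
\end{align*}
The condition at $+\infty$ is immediate from $\Mb_s \to 0$ in trace norm (Airy's super-exponential decay and the boundedness of $f_t$). The condition at $-\infty$ is the KPZ analogue of Lemma~\ref{Lemma:ExistenceZkAi}: one needs $s \mapsto \|\Mb_s^{\ell}(f_t \Ai_s)\|^{2}_{L^2(\Rr_+)} \notin L^1(\Rr)$ for every $\ell \geq 0$. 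The same oscillatory lower bound \eqref{Eq:Estimate:AiryMinusInf} on $\Ai(-\abs{s})^2$ goes through, because on the integration windows used there (which for $s \ll 0$ sit inside $[s, -s]$) the Fermi factor $f_t$ is bounded below by a positive constant and bounded above by $1$; hence it only alters the estimates by a multiplicative constant, preserving the divergences $\Omega(\abs{s}^{3m+1/2})$ and $\Omega(\abs{s}^{3m+2})$. Then the Lidskii formula~\eqref{Eq:Fredholm:Lidskii} factors
\begin{align*}
\Prob{\lambdab_1^{\KPZ(t)} \leq s} = \prod_{k \geq 1}(1 - \lambda_k(s)) = \prod_{k \geq 1}\int_{-\infty}^{s} f_{Z_k^{\KPZ(t)}}(u)\, du = \Prob{\max_{k \geq 1} Z_k^{\KPZ(t)} \leq s},
\end{align*}
the last equality using independence of the $Z_k^{\KPZ(t)}$ as prescribed by \eqref{Def:Law:PsiSquare:KPZ}.

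The main obstacle is the normalisation at $s \to -\infty$: checking for every $k \geq 1$ that $f_{Z_k^{\KPZ(t)}}$ integrates to $1$ requires transplanting the pointwise estimates of Lemma~\ref{Lemma:ExistenceZkAi} through the bounded multiplier $\Mg_{f_t}$, which is a routine but somewhat tedious adaptation. Once this is verified, everything is a direct replay of the second-approach proof of Theorem~\ref{Thm:TW2max:bis}, with $\Hb(\Ai_s)^2$ replaced by its Fermi-weighted conjugate $\Mb_s$.
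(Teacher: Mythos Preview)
Your approach is exactly the paper's: the proof preceding the statement already carries out the cyclic rewriting $\det(I-\Hb(\Ai_s)\Mg_{f_t^2}\Hb(\Ai_s)) = \det(I-\Mg_{f_t}\Hb(\Ai_s)^2\Mg_{f_t})$, the rank-one derivative $\dot{\Mb}_s = -(f_t\Ai_s)\otimes(f_t\Ai_s)^*$, and then simply invokes the fixed-interval Fuchs lemma~\ref{Lemma:Fuchs:Fixed} together with Lidskii~\eqref{Eq:Fredholm:Lidskii}. The paper stops there without explicitly checking the boundary values, so your extra care is welcome.

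One point to flag, however: your justification of $\lambda_k(-\infty)=1$ conflates the two approaches. The condition ``$s\mapsto\|\Mb_s^{\ell}(f_t\Ai_s)\|_{L^2(\Rr_+)}^2\notin L^1(\Rr)$ for every $\ell\geq 0$'' is the \emph{first}-approach normalisation (it makes each exponential factor in~\eqref{Eq:Fredholm:MultHankelSquareWithExp} a genuine cumulative distribution function); it does not, by itself, force every Lidskii eigenvalue of $\Mb_s$ to tend to $1$ --- it only forces the product $\prod_k(1-\lambda_k(s))$ to vanish, which could happen with a single eigenvalue reaching $1$. The clean route, paralleling what the paper does for $\TW_2$ in the proof of Theorem~\ref{Thm:TW2max:bis} (where $\Kb_{\!\Ai,-\infty}=I$ is invoked directly), is to argue at the operator level: writing $K_{\KPZ(s,t)}(x,y)=\int_\Rr \Ai(x+v)\Ai(y+v)\,f_t(v-s)^2\,dv$ and letting $s\to-\infty$ drives the Fermi factor $f_t(\cdot-s)^2\to 1$ pointwise, so $\Kb_{\KPZ(s,t)}\to I$ on $L^2(\Rr_+)$ exactly as in the Airy case, and hence every $\lambda_k(s)\to 1$. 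Since $\Mb_s$ and $\Kb_{\KPZ(s,t)}$ share the same nonzero spectrum, this gives what you need.
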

\end{shaded}

\begin{remark}
Translating the $\KPZ$ kernel gives 
\begin{align*}
\det\prth{I - \Kb_{\KPZ(s, t)}}_{L^2(\Rr_+)} = \det\prth{I - \widetilde{\Kb}_{\KPZ(s, t)}}_{L^2([s, +\infty))}
\end{align*}
where
\begin{align*}
\widetilde{K}_{\KPZ(s, t)}(x, y) = \int_{\Rr_+} \Ai(x + u)\Ai(y + u) f_t(u - s)^2 du
\end{align*}
which would be the kernel of $ \Hb(\Ai)\Mg_{f_t(\cdot - s)^2}\Hb(\Ai) $ if it were acting on $ L^2(\Rr_+) $ (but since we are acting on $ L^2([s, +\infty)) $, this is not a correct interpretation). 

The general Fuchs lemma with a parametric kernel $ K_s $ on a varying interval $ L^2([s, +\infty)) $ given in \eqref{Eq:FuchsLemmaGeneralEquation} gives then for any $ \Psi^{(s)} \in \ensemble{\Psi_k^{(s)}}_{k \geq 1} $
\begin{align*}
\dot{\lambda}(s) & =  - \lambda(s) \Psi^{(s)}(0)^2 + \int_{[s, +\infty)^3}   \Ai(x + u)\Ai(y + u) 2 (f_t f_t')(u - s) du  \widetilde{\Psi}^{(s)}(x)  \widetilde{\Psi}^{(s)}(y) dx dy \\
                 & = - \lambda(s) \Psi^{(s)}(0)^2 + \int_{[s, +\infty) }  \bracket{ \Ai_{s + u},\Psi^{(s)}  }_{L^2(\Rr_+)}^{\! 2} 2 (f_t f_t')(u - s) du  
\end{align*}

This first order affine equation $ \dot{\lambda} + a \lambda = b $ can be easily integrated to give another expression of $ \lambda $, of the form $ \lambda_s = \int_s^{+\infty} (e^{\int_s^v a}) b(v) dv $, but it seems less interesting than~\eqref{Def:Law:PsiSquare:KPZ}.
\end{remark}


\begin{remark}
Tsai \cite[(1.3)]{TsaiKPZ} computes the large deviation functional for $ h(0, t) - \frac{t}{12} $ (with initial condition $ \delta_0 $). With the max-independence structure, this becomes a problem of large deviations of a max of independent random variables, and the Poisson approximation approach described in \S~\ref{SubSec:Intro:GUE} applies. Nevertheless, the random variables are not so explicit and a study of the prolate hyperspheroidal functions is necessary beforehand.
\end{remark}

\medskip 
\subsection{Extension to $ \TW_1 $}\label{SubSec:TW:GOE}

Recall that the power $ \alpha > 0 $ of a cumulative distribution function $ F_X : x \mapsto \Prob{X \leq x} $ defines a random variable $ X^{(\alpha)} $. If $ \alpha \in \Nn^* $, one has $ X^{(\alpha)} \eqlaw \max_{1 \leq k \leq \alpha} X_k $ where $ (X_k)_k $ is a sequence of i.i.d. copies of $X$. In the general case, it seems that such a construction is not available.

The case of the $ GO\!E $ Tracy-Widom distribution can then be treated analogously to the $ GU\!E $ one~:

\begin{shaded}
\begin{theorem}[Max-independence structure in $ \TW_1 $]\label{Thm:TW1max}
One has with independent random variables in the RHS
\begin{align}\label{EqMax:TW1}
\boxed{\TW_1 \eqlaw \max\!\ensemble{ Q^{(1/2)}, \max_{k \geq 0} Z_k(\Ai)^{(1/2)} } }
\end{align}
where $ X^{(1/2)} $ is the random variable built out of $X$ via $ \Prob{X \leq x}^{1/2} = \Prob{ X^{(1/2) } \leq x} $, $ (Z_k(\Ai))_k $ are defined in theorem~\ref{Thm:TW2max} and $ \Prob{Q \leq s} = e^{ - \int_s^{+\infty} q(x) dx } $ with $q$ the Hastings-McLeod solution of the Painlev\'e II equation defined in \eqref{Def:PainlevéII}.
\end{theorem}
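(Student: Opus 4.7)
The plan is to reduce \eqref{EqMax:TW1} to the classical Tracy-Widom identity relating $F_1$ and $F_2$, and then to apply Theorem~\ref{Thm:TW2max} together with the CDF-multiplication property for independent maxima.

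First I would recall the Tracy-Widom formula
$$F_1(s) := \Prob{\TW_1 \leq s} = \sqrt{F_2(s)} \cdot \exp\!\prth{ -\frac{1}{2} \int_s^{+\infty} q(x)\, dx }, \qquad F_2(s) := \Prob{\TW_2 \leq s},$$
where $q$ is the Hastings-McLeod solution of Painlev\'e II in \eqref{Def:Painlev�II}. This is the original identity of Tracy-Widom for the $GO\!E$ distribution and can be cited directly; no analytic work is needed here.

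Next I would identify the two factors on the right-hand side as cumulative distribution functions of the two components in \eqref{EqMax:TW1}. By Theorem~\ref{Thm:TW2max}, $\Prob{\max_{k \geq 0} Z_k(\Ai) \leq s} = F_2(s)$; by definition of the $1/2$-power construction and independence, the variables $(Z_k(\Ai)^{(1/2)})_{k\geq 0}$ are independent with cumulative distribution functions $\Prob{Z_k(\Ai) \leq s}^{1/2}$, so that
$$\Prob{\max_{k \geq 0} Z_k(\Ai)^{(1/2)} \leq s} = \prod_{k \geq 0} \Prob{Z_k(\Ai) \leq s}^{1/2} = \prth{\prod_{k\geq 0}\Prob{Z_k(\Ai)\leq s}}^{\!\! 1/2} = \sqrt{F_2(s)}.$$
Likewise, by the definition of $Q$ and of the $1/2$-power construction,
$$\Prob{Q^{(1/2)} \leq s} = \Prob{Q \leq s}^{1/2} = \exp\!\prth{ -\frac{1}{2} \int_s^{+\infty} q(x)\, dx }.$$

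Finally, assuming $Q^{(1/2)}$ is taken independent of the sequence $(Z_k(\Ai)^{(1/2)})_{k\geq 0}$, the cumulative distribution function of the maximum is the product of the cumulative distribution functions of its components, giving exactly $F_1(s)$ by the Tracy-Widom formula. The only subtle point — and the ``main obstacle'' in so far as there is one — is ensuring that the $1/2$-power construction is well-defined as an actual random variable (i.e. that $F^{1/2}$ is a genuine cumulative distribution function), which holds trivially since $F^{1/2}$ is non-decreasing, right-continuous, and has the correct limits at $\pm\infty$ whenever $F$ does. Everything else is a direct computation of cumulative distribution functions and an appeal to Theorem~\ref{Thm:TW2max} and the Tracy-Widom relation between $F_1$ and $F_2$.
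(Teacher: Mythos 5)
Your proposal is correct and follows essentially the same route as the paper: the Tracy--Widom relation \cite[(53)]{TracyWidomGOEgse} between $F_{\TW_1}$ and $F_{\TW_2}$, the product decomposition of $F_{\TW_2}$ from Theorem~\ref{Thm:TW2max}, and the CDF-product identity for independent maxima. The only point you take for granted that the paper checks explicitly is the existence of $Q$, i.e.\ that $s \mapsto e^{-\int_s^{+\infty} q(x)\,dx}$ is a genuine distribution function, which rests on the Hastings--McLeod facts $q \geq 0$, $\int^{+\infty} q < \infty$ (from $q \sim \Ai$ at $+\infty$) and $\int_\Rr q = +\infty$ (from $q(t) \sim \sqrt{-t/2}$ as $t \to -\infty$), see \cite{HastingsMcLeod}.
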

\end{shaded}

\begin{proof}
Using the result of \cite[(53)]{TracyWidomGOEgse}, one has
\begin{align}\label{EqTW:TW1}
\Prob{\TW_1 \leq s} = \sqrt{ \Prob{ \TW_2 \leq s } e^{- \int_s^{+\infty} q(x) dx } } 
\end{align}

Using $ \Prob{X \leq x}^{1/2} := \Prob{ X^{(1/2) } \leq x} $, one thus has 
\begin{align*}
\Prob{\TW_2 \leq s }^{1/2} = \prod_{k \geq 1} \Prob{Z_k(\Ai) \leq s}^{1/2} =: \prod_{k \geq 1} \Prob{Z_k^{(1/2)}(\Ai) \leq s} = \Prob{ \max_{k \geq 1} Z_k(\Ai)^{(1/2)} \leq s }
\end{align*}
and it remains to show that there exists $ Q $ such that $ \Prob{Q \leq s} = e^{ -  \int_s^{+\infty} q(x) dx } $. It is clear that $ F_q(s) := e^{ -   \int_s^{+\infty} q(x) dx } \to 1 $ when $ s \to +\infty $. Proving that $ F_q(s) \to 0 $ when $ s \to -\infty $ is equivalent to $ \int_\Rr q = +\infty $, and showing that $ F_q $ is increasing is equivalent to $ q \geq 0 $ on $ \Rr $ by differentiation. Such properties are granted for $q$, see e.g. \cite{HastingsMcLeod} (the fact that $ \int_\Rr q = +\infty $ is a consequence of the asymptotics $ q(t) \sim \abs{t/2}^{1/2} $ when $ t \to -\infty $). 
\end{proof}


\begin{remark}
One might be tempted to use the Ferrari-Spohn characterisation \cite[Prop. 1/(16)]{FerrariSpohnGOE} that reads  
\begin{align}\label{Eq:TW1:FerrariSpohn}
\Prob{\TW_1 \leq s} = \det(I - \Hb(\Ai_s))_{L^2(\Rr_+)}  
\end{align}
but $ \Hb(\Ai_s) $ is not a positive operator, and a structure similar to the one used for the $ \gue $ is not obvious to obtain.

Nevertheless, \eqref{Eq:TW1:FerrariSpohn} allows to give another decomposition of the random variable $ Q $ defined in the proof of theorem~\ref{Thm:TW1max} using the Fuchs lemma~\ref{Lemma:Fuchs:Fixed}~:
\begin{align*}
\Prob{Q \leq s} & =  e^{- \int_s^{+\infty} q(x) dx } = \frac{\Prob{\TW_1 \leq s}^2 }{ \Prob{ \TW_2 \leq s } } = \frac{\det(I - \Hb(\Ai_s))_{L^2(\Rr_+)}^2}{ \det(I - \Hb(\Ai_s)^2)_{L^2(\Rr_+)} } \\
               & = \frac{\det(I - \Hb(\Ai_s))_{L^2(\Rr_+)} }{ \det(I + \Hb(\Ai_s) )_{L^2(\Rr_+)} } \\
               & = \det\prth{ I - 2 \Hb(\Ai_s) (I + \Hb(\Ai_s))\inv }_{L^2(\Rr_+)} 
\end{align*}

Define
\begin{align*}
\Lb_s := 2 \Hb(\Ai_s) (I + \Hb(\Ai_s))\inv \ActsOn L^2(\Rr_+)
\end{align*}

If $ \mu_k(s) $ denotes the $k$-th eigenvalue of $ \Lb_s $ for the normalised eigenvector $ \psi_{k, s} $, then
\begin{align*}
\mu_k(s)
               & = 2 \int_s^{+\infty } \bracket{ \frac{d}{ds}\prth{ \Hb(\Ai_t) (I + \Hb(\Ai_t))\inv } \psi_{k, t}, \psi_{k, t} }_{\!\! L^2(\Rr_+)} dt  \\ 
               & = 2 \int_s^{+\infty } \bracket{    (I + \Hb(\Ai_t))^{-2}   \Ai_t\otimes \Ai_t^* \psi_{k, t}, \psi_{k, t} }_{L^2(\Rr_+)} dt  \\
               & = 2 \int_s^{+\infty } \bracket{    (I + \Hb(\Ai_t))^{-2}   \Ai_t , \psi_{k, t} }_{L^2(\Rr_+)}  \bracket{  \Ai_t , \psi_{k, t} }_{L^2(\Rr_+)} dt  \\
               & = 2 \int_s^{+\infty } \bracket{    (I + \Hb(\Ai_t))\inv   \Ai_t , \psi_{k, t} }_{L^2(\Rr_+)}^2 dt 
\end{align*}

One has moreover $ \mu_k(-\infty) = 1 $ as $ \Lb_{-\infty} = I $ and $ \mu_k(+\infty) = 0 $ as $ \Lb_{+\infty} = 0 $. We thus deduce the existence of a random variable $ Q_k $ such that 
\begin{align*}
\Prob{ Q_k > s } = 2 \int_s^{+\infty } \bracket{    (I + \Hb(\Ai_t))\inv   \Ai_t , \psi_{k, t} }_{L^2(\Rr_+)}^2 dt 
\end{align*}
and, with independent random variables,
\begin{align}\label{EqMax:Q}
Q \eqlaw \max_{k \geq 1} Q_k 
\end{align}

As a result, 
\begin{align*}
\TW_1 \eqlaw \max_{k \geq 1}\ensemble{\max\ensemble{ Q_k^{(1/2)}, Z_k(\Ai)^{(1/2)} }}
\end{align*}

\end{remark}


\begin{remark}
The case of the $ GS\!E $ Tracy-Widom distribution is yet to be obtained. The description similar to \eqref{EqTW:TW1} is  \cite[(54)]{TracyWidomGOEgse}
\begin{align*}
\Prob{\TW_4 \leq s/\sqrt{2} } = \sqrt{\Prob{\TW_2 \leq s}} \cosh(U(t))^2, \qquad U(t) := \frac{1}{2} \int_t^{+\infty} q  
\end{align*}
but the $ \cosh $ term does not seem to be expressible as a probability. Other expressions are available, such as the Ferrari-Spohn one \cite[(35)]{FerrariSpohnGOE} that reads
\begin{align*}
\Prob{\TW_4 \leq s/\sqrt{2} } =  \frac{1}{2}\prth{ \det\prth{I - \Hb(\Ai_s)}_{L^2(\Rr_+)}  + \det\prth{I + \Hb(\Ai_s)}_{L^2(\Rr_+)}  }
\end{align*}
but the same problem remains. 
\end{remark}

$ $

\medskip
\section{Max-independence structure in random matrix ensembles}\label{Sec:RMT}

\subsection{The $ \gue $}\label{Subsec:RMT:GUE}

The Gaussian Unitary Ensemble of size $N$, in short $ \gue_N $, is the probability space $ (\He_N(\Cc), \Pp_N) $ where $ \Pp_N $ is the Gaussian measure given by 
\begin{align}\label{Def:Law:GUE}
\Pp_N(dM) = e^{ - \trace(M M^*) / 2 } \frac{dM}{(2\pi)^{N^2/4}}
\end{align}

A disintegration of $ \Mb \sim \gue_N $ according to the map $ M \mapsto U^* \Lambda U $ where $ U \in \Ue_N(\Cc) $ and $ \Lambda $ is a diagonal real matrix gives the law of the eigenvalues $ \lambdab = (\lambdab_{1, N}, \dots, \lambdab_{N, N}) $ 
\begin{align*}
\Proba{N}{ \lambdab \in d\xb } = \frac{1}{N!\, \Ze_N } \Delta(\xb)^2 e^{- \abs{\xb}^2/2 } d\xb, \quad \Delta(\xb) := \prod_{1 \leq i < j \leq N }(x_i - x_j), \quad \Ze_N := (2\pi)^{N/2} \prod_{k = 0}^{N - 1} k!
\end{align*}

In particular, supposing that $ \lambdab_{1, N} \geq   \cdots \geq \lambdab_{N, N} $, one has
\begin{align*}
\Prob{ \lambdab_{1, N} \leq s } = \frac{  \Ze_N(s) }{ \Ze_N(\infty) }, \qquad  \Ze_N(s) := \frac{1}{N!} \int_{ (-\infty, s)^N } \Delta(\xb)^2 e^{- \abs{\xb}^2/2 } d\xb
\end{align*}

We now recall, for the reader's convenience, some classical facts about this ensemble that one can find e.g. in \cite{AndersonGuionnetZeitouni, BenHougKrishnapurPeresVirag, ForresterBook, MethaBook, TaoRMT}. 

\medskip

The Andr\'eieff-Heine-Szeg\"o formula \cite[p. 24]{Szego} reads for $ f_i, g_i \in L^2(\Rr, \mu) $
\begin{align}\label{Eq:AndreieffHeineSzego}
\frac{1}{N!} \! \int_{\Rr^N} \! \det\prth{f_{i }(t_j) }_{1 \leq i, j \leq N} \det\prth{g_{i }(t_j) }_{1 \leq i, j \leq N} d\mu^{\otimes N}(\tb) = \det\prth{ \int_\Xg f_{i } g_{j } d\mu  }_{1 \leq i, j \leq N}
\end{align}

This is the continuous version\footnote{Using a discrete measure $ \mu $, one gets \eqref{Eq:CauchyBinet} from \eqref{Eq:AndreieffHeineSzego}~; but one can pass to the limit with a Riemann sum starting from \eqref{Eq:CauchyBinet} to obtain \eqref{Eq:AndreieffHeineSzego} which makes these two formuli equivalent.} of the Cauchy-Binet formula
\begin{align}\label{Eq:CauchyBinet}
\det\prth{ \sum_{\ell \geq 1} A_{i, \ell} B_{\ell, j} }_{1 \leq i, j \leq N} & = \sum_{1 \leq \ell_1 < \ell_2 < \cdots < \ell_N } \det\prth{ A_{i, \ell_j} }_{1 \leq i, j \leq N} \det\prth{ B_{ \ell_i, j} }_{1 \leq i, j \leq N} \notag \\
               & = \frac{1}{N! } \sum_{1 \leq \ell_1 \neq \ell_2 \neq \cdots \neq \ell_N } \det\prth{ A_{i, \ell_j} }_{1 \leq i, j \leq N} \det\prth{ B_{ \ell_i, j} }_{1 \leq i, j \leq N} \notag \\
               & = \frac{1}{N! } \sum_{ \ell_1, \dots , \ell_N \geq 0 } \det\prth{ A_{i, \ell_j} }_{1 \leq i, j \leq N} \det\prth{ B_{ \ell_i, j} }_{1 \leq i, j \leq N} 
\end{align}

In the case of $ d\mu(x) := e^{-x^2/2} \frac{dx}{\sqrt{2\pi}} $, one has
\begin{align*}
\Ze_N(s) & := \frac{1}{N!} \int_{ (-\infty, s)^N } \Delta(\xb)^2 e^{- \abs{\xb}^2/2 } \frac{d\xb}{(2\pi)^{N/2} } \\
             & =   \int_{ (-\infty, s)^N } \det\prth{P_{i - 1}(x_j)}_{1 \leq i, j \leq N} \det\prth{Q_{i - 1}(t_j) }_{1 \leq i, j \leq n} e^{- \abs{\xb}^2/2 } \frac{d\xb}{(2\pi)^{N/2} }
\end{align*}
for \textit{any} family of monic polynomials $ (P_k, Q_k)_{k \geq 0} $ satisfying $ \deg(P_k) = \deg(Q_k) = k $ (taking linear combinations in the Vandermonde determinant). The choice of the truncated Hermite polynomials $ P_k = Q_k = H_k(\cdot \vert s) $, i.e. the monic orthogonal polynomials for $ L^2( (-\infty, s), \mu) $ gives a diagonal matrix, and the Andr\'eieff-Heine-Szeg\"o formula \eqref{Eq:AndreieffHeineSzego} then implies
\begin{align*}
\Ze_N(s) =  \prod_{k = 0}^{N - 1} \int_{-\infty}^s H_k(x\vert s)^2 e^{-x^2/2} \frac{dx}{\sqrt{2\pi} }
\end{align*}

The formula 
\begin{align*}
\Prob{\lambdab_{1, N} \leq s} = \prod_{k = 0}^{N - 1} \frac{\norm{H_k(\cdot \vert s) }_{L^2( (-\infty, s), \mu) }^2}{\norm{H_k(\cdot \vert \infty) }_{L^2( (-\infty, \infty), \mu) }^2}
\end{align*}
is \textit{not} of the form
\begin{align*}
\Prob{\lambdab_{1, N} \leq s} = \prod_{k = 0}^{N - 1} \int_{-\infty}^s f_{W_k}(x) dx = \Prob{ \max_{0 \leq k \leq N-1} W_k \leq s} 
\end{align*}
since the polynomials depend on $s$. The following theorem allows to circumvent this fact \`a la Ginibre:

\begin{shaded}
\begin{theorem}[Max-independence structure in the $ \gue $]\label{Thm:GUEmax}
Let $ H_k(x\vert t) $ be the orthogonal polynomial for $ L^2( (-\infty, t), e^{-x^2/2} \frac{dx}{\sqrt{2\pi}}) $ (truncated Hermite polynomial). Define the law of a random variable $ W_k $ by
\begin{align}\label{Def:Law:PsiSquare:GUE}
\Prob{ W_k \leq s } = \int_{-\infty}^s \He_k(x)^2 e^{-x^2/2} \frac{ dx }{ \sqrt{2\pi} \, k! }, \qquad \He_k(x) := H_k( x \vert \blue{x} )
\end{align}
and let $ (W_k)_k $ be a sequence of independent such random variables. Then,
\begin{align}\label{EqMax:GUE}
\boxed{ \lambdab_{1, N} \eqlaw \max_{0 \leq k \leq N - 1} W_k }
\end{align}
\end{theorem}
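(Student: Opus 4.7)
The plan is to start from the factorisation already obtained just above the theorem,
\[
\Prob{\lambdab_{1,N} \leq s} = \prod_{k=0}^{N-1} \frac{h_k(s)}{k!}, \qquad h_k(s) := \int_{-\infty}^s H_k(x|s)^2\, e^{-x^2/2}\frac{dx}{\sqrt{2\pi}},
\]
using that $k! = \|H_k(\cdot|\infty)\|^2_{L^2(\Rr,\mu)}$ is the classical norm of the monic Hermite polynomial. The theorem will follow at once if I can show that for every $k \geq 0$,
\[
h_k(s) = \int_{-\infty}^s \He_k(x)^2\, e^{-x^2/2}\frac{dx}{\sqrt{2\pi}},
\]
since dividing by $k!$ then identifies the $k$-th factor with $\Prob{W_k \leq s}$, and a product of cumulative distribution functions is the cumulative distribution function of the max of independent copies.

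The core step is therefore the ODE
\[
h_k'(s) = \He_k(s)^2\, e^{-s^2/2}/\sqrt{2\pi}, \qquad h_k(-\infty) = 0,
\]
which I would establish by Leibniz' rule: differentiation in $s$ produces the boundary term $H_k(s|s)^2 e^{-s^2/2}/\sqrt{2\pi} = \He_k(s)^2 e^{-s^2/2}/\sqrt{2\pi}$ plus a variational correction
\[
2\int_{-\infty}^s H_k(x|s)\,\partial_s H_k(x|s)\, e^{-x^2/2}\frac{dx}{\sqrt{2\pi}}.
\]
Since $H_k(\cdot|s)$ is monic of degree $k$ with leading coefficient independent of $s$, the partial derivative $\partial_s H_k(\cdot|s)$ is a polynomial in $x$ of degree at most $k-1$, hence a linear combination of $H_0(\cdot|s), \dots, H_{k-1}(\cdot|s)$. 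Orthogonality of $H_k(\cdot|s)$ to all lower-degree polynomials in $L^2((-\infty,s),\mu)$ kills the correction, and integrating the ODE from $-\infty$ to $s$ yields the claimed formula. As a byproduct, $h_k(+\infty) = k!$ guarantees that the prescribed density $f_{W_k}$ integrates to $1$, so $W_k$ is a bona fide random variable (with $W_0 \sim \Ns(0,1)$ since $\He_0 = 1$).

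The main obstacle, such as it is, is recognising that one must identify the truncation parameter with the evaluation point (the content of $\He_k(x) := H_k(x|x)$): the Leibniz boundary term contributed at $s$ carries the value of the polynomial at the very point where the orthogonality measure is truncated, and it is precisely this coincidence, combined with orthogonality at the moving truncation, that annihilates the variation of the polynomial family with $s$. Once this is noticed, the argument collapses to a two-line ODE in the same spirit as the Fuchs lemma of Section~\ref{SubSec:TW:SecondApproach}, and the rest is bookkeeping.
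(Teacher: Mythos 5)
Your proposal is correct and follows essentially the same route as the paper: starting from the product of truncated-Hermite norms obtained via the Andr\'eieff--Heine--Szeg\"o formula, differentiating $\int_{-\infty}^s H_k(x\vert s)^2\,d\mu(x)$ in $s$, killing the variational term by orthogonality of $H_k(\cdot\vert s)$ to the degree-$(k-1)$ polynomial $\partial_s H_k(\cdot\vert s)$, and integrating with boundary value $0$ at $-\infty$ and normalisation $k!$ at $+\infty$. There is nothing to add; the argument is the paper's own, including the key observation that the Leibniz boundary term evaluates the polynomial at the moving truncation point, which is what produces $\He_k(x)=H_k(x\vert x)$.
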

\end{shaded}

\begin{proof}
A differentiation of $ \int_{-\infty}^s H_k( x \vert s)^2 d\mu(x) $ with $ d\mu(x) := e^{-x^2/2} \frac{dx}{\sqrt{2\pi}} $ gives 
\begin{align*}
\frac{d}{ds} \int_{-\infty}^s H_k( x \vert s)^2 e^{-x^2/2} \frac{dx}{\sqrt{2\pi} } & = H_k( s \vert s)^2 \frac{e^{-s^2/2}}{\sqrt{2\pi} } + \int_{-\infty}^s \frac{\partial}{\partial s} H_k( x \vert s)^2 e^{-x^2/2} \frac{dx}{\sqrt{2\pi} } \\
                 & = H_k( s \vert s)^2 \frac{e^{-s^2/2}}{\sqrt{2\pi} } + 2 \int_{-\infty}^s H_k( x \vert s) \frac{\partial}{\partial s} H_k( x \vert s) e^{-x^2/2} \frac{dx}{\sqrt{2\pi} }
\end{align*}

Since $ H_k(X\vert s) = X^k + \sum_{j = 0}^{k - 1} a_{j, k}(s) X^j $ is a monic orthogonal polynomial, $ \frac{\partial}{\partial s} H_k( X \vert s) = \sum_{j = 0}^{k - 1} \frac{\partial}{\partial s} a_{j, k}(s) X^j $ is of degree $ k - 1 $, hence, by orthogonality $ \bracket{H_k, \frac{\partial}{\partial s} H_k }_{L^2( (-\infty, s], \mu )} = 0 $.

Integrating from $ A $ to $s$ this equality gives
\begin{align*}
\int_{-\infty}^s H_k( x \vert s)^2 d\mu(x) - \int_{-\infty}^A H_k( x \vert A)^2 d\mu(x) = \int_{A}^s H_k( x \vert x)^2 e^{-x^2/2} \frac{dx}{\sqrt{2\pi}}
\end{align*}

Now, one can make $ A \to -\infty $ on the LHS which is finite, and $ \int_{-\infty}^A H_k( x \vert A)^2 d\mu(x) \to 0 $ since $ H_k(X\vert A) $ is a polynomial for all $ A $ and $ H_k(X\vert -\infty) = 0 $, giving the desired result.

Note that $ s \to +\infty $ in this equality gives the renormalisation constant as $ \norm{ H_k(\cdot \vert \infty) }^2_{L^2(\Rr, \mu) } = k! $ for the Gaussian measure. This concludes the proof. 
\end{proof}


\begin{remark}\label{Rk:GUEmax}
%
%
%
$ $
\begin{itemize}
\item The independence structure \eqref{EqLaw:Ginibre} in the Ginibre ensemble can be directly proven starting from the Andr\'eieff-Heine-Szeg\"o formula. Nevertheless, the differentiation step is not necessary as the orthogonal polynomials are directly the power functions on the disk. The proof extends to any complex ensemble with a circular symmetry (see e.g. \cite[ch. 4.3]{BenHougKrishnapurPeresVirag}).

\medskip
\item The argument of differentiation that we use to derive \eqref{Def:Law:PsiSquare:GUE} was also employed by Majumdar-Nadal \cite[\S~3.2]{MajumdarNadal} in the core of their rederivation of the Tracy-Widom distribution (with a method involving norms of orthogonal polynomials originally due to Gross-Matytsin \cite{GrossMatytsin}), but with no probabilistic considerations. 

\medskip
\item The differentiation argument is also the one used in the Fuchs lemma~\ref{Lemma:Fuchs:Fixed}/\ref{Lemma:Fuchs:Varying}, and the norms of orthogonal polynomials are the eigenvalues of the projection operator with the usual Christoffel-Darboux kernel that reads $ K_N(x, y) = \sum_{k = 0}^N H_k(x \vert t)   H_k(y \vert t) $ in $ L^2( (-\infty, t), e^{-x^2/2} \frac{dx}{\sqrt{2\pi}}) $. Normalising each term of the sum by their $ L^2 $-norm gives a sum of weighted projectors, the weight being the eigenvalue, i.e. $ \norm{H_k(\cdot\vert t)}^2_{L^2((-\infty, t], \mu)} $. 
Equivalently, one can use the truncated Hermite polynomials in $ L^2( [t, +\infty), e^{-x^2/2} \frac{dx}{\sqrt{2\pi}}) $ and the eigenvalue will be the norm of these polynomials, $ \widetilde{H}_k(x\vert t) $ (say) on this set. Of course, the formuli are equivalent~: $ k! - \vert\!\vert \widetilde{H}_k(\cdot, t)\vert\!\vert_{[t, +\infty)}^2  = \norm{H_k(\cdot \vert t)}^2_{(-\infty, t]}  $.

This is the exact analogue of the Fuchs lemma that one applies (after an application of the Lidskii formula). The expression of the law uses then the function $ \widetilde{H}_k(x\vert x)^2 $ in the same way \eqref{Def:Law:PsiSquare:TW2bis} uses $ \Psi_k^{(s)}(0)^2 = \widetilde{\Psi}_k^{(s)}(s)^2 \equiv \widetilde{\Psi}_k(s\vert s)^2 $.
\end{itemize}
\end{remark}

\vspace{0.1cm}
\subsection{Generalisation to any matrix ensemble on the real line and the circle}\label{Subsec:RMT:General}

The proofs of the theorems here presented are identical to the previous one and are thus omitted.

\subsubsection{The real line} 

\begin{definition}[Determinantal measure]\label{Definition:DeterminantalMeasure}
Let $ \mu $ be a probability measure on a space $ \Xg $ and let $ (\varphib, \psib) := (\varphi_k, \psi_k)_{k \geq 1} $ be a family of square integrable functions of $ L^2(\Xg, \mu) $. A determinantal measure with parameters $ (\varphib, \psib, \mu) $ is the law of random points $ (\lambdab_{k, N})_{1 \leq k \leq N} $ in $ \Xg $ with distribution (see e.g. \cite{LyonsDPP})
\begin{align}\label{Def:DeterminantalMeasure}
\hspace{-0.25cm}\Pp_{\! N}( \lambdab \in d\xb ) \! = \! \frac{1}{N!\, \det(\int_\Xg \varphi_i \overline{\psi_j} d\mu)_{1 \leq i, j \leq N} } \det\prth{ \varphi_i(x_j) }_{\! 1 \leq i, j \leq N} \overline{\det\prth{ \psi_i(x_j) }_{\! 1 \leq i, j \leq N} } d\mu^{\otimes N}(\xb) 
\end{align}

We note $ \boxed{\lambdab \sim \DetMeas(\varphib, \psib, \mu \vert N)} $ for such a random process. Note that the equality $ \int_{\Xg^N} \Proba{N}{ \lambdab \in d\xb } = 1 $ is the Andr\'eieff-Heine-Szeg\"o formula \eqref{Eq:AndreieffHeineSzego}.
\end{definition}

Theorem \ref{Thm:GUEmax} can be easily modified in the case of a general determinantal measure to give:

\begin{shaded}
\begin{theorem}[Independence structure in matrix ensembles on the real line]\label{Thm:GeneralEmax}
Consider random points $ (\lambdab_{k, N})_{1 \leq k \leq N} $ in $ \Rr $ with distribution 
\begin{align*}
\Proba{N}{ \lambdab \in d\xb } = \frac{1}{N!\, \Ze_N } \Delta(\xb)^2 d\mu^{\otimes N}(\xb), \qquad d\mu(x) := \dot{\mu}(x) dx
\end{align*}
i.e. $ \lambdab \sim \DetMeas(\psib, \psib, \mu \vert N) $ with $ \psi_k(x) := x^{k - 1} $.

Then, one has
\begin{align}\label{EqLaw:LineEnsemble}
\boxed{ \lambdab_{1, N} \eqlaw \max_{1 \leq k \leq N} Y_k }
\end{align}
where $ (Y_k)_k $ is a sequence of independent random variables with distribution given by
\begin{align}\label{Def:Law:PsiSquare:GUEGeneral}
\Prob{ Y_{k + 1} \leq s } = \int_{-\infty}^s P_k( x \vert \color{blue} x \color{black} )^2 \frac{d\mu(x)}{\int_\Rr P_k( y \vert \color{blue} \infty \color{black} )^2 d\mu(y)}
\end{align}
where $ P_k( \cdot \vert t) $ is the orthogonal polynomial for $ L^2( (-\infty, t), \mu) $ . 
\end{theorem}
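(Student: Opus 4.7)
The plan is to follow verbatim the structure of the proof of theorem~\ref{Thm:GUEmax}, replacing the truncated Hermite polynomials by the truncated orthogonal polynomials $P_k(\cdot\vert t)$ for $L^2((-\infty,t),\mu)$ and the Gaussian measure by the general $\mu$. Writing
\begin{align*}
\Prob{\lambdab_{1,N}\leq s} = \frac{\Ze_N(s)}{\Ze_N(\infty)}, \qquad \Ze_N(s) := \frac{1}{N!}\int_{(-\infty,s)^N}\Delta(\xb)^2\,d\mu^{\otimes N}(\xb),
\end{align*}
I would first insert the identity $\Delta(\xb) = \det(P_{i-1}(x_j\vert s))_{1\leq i,j\leq N}$ which is valid for \emph{any} monic family with $\deg P_k = k$ (the $s$-dependence drops out by column operations in the Vandermonde), apply the Andr\'eieff--Heine--Szeg\"o formula~\eqref{Eq:AndreieffHeineSzego} on the interval $(-\infty,s)$, and use the orthogonality of $P_k(\cdot\vert s)$ in $L^2((-\infty,s),\mu)$ to reduce the Gram matrix to a diagonal. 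This yields
\begin{align*}
\Ze_N(s) = \prod_{k=0}^{N-1}\norm{P_k(\cdot\vert s)}^2_{L^2((-\infty,s),\mu)}.
\end{align*}

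The second step is the differentiation trick: because $P_k(X\vert s) = X^k + \sum_{j=0}^{k-1} a_{j,k}(s) X^j$ is monic, $\partial_s P_k(\cdot\vert s)$ has degree at most $k-1$, hence is orthogonal to $P_k(\cdot\vert s)$ in $L^2((-\infty,s),\mu)$. Consequently
\begin{align*}
\frac{d}{ds}\norm{P_k(\cdot\vert s)}^2_{L^2((-\infty,s),\mu)} = P_k(s\vert s)^2\,\dot\mu(s) + 2\bracket{P_k(\cdot\vert s),\partial_s P_k(\cdot\vert s)}_{L^2((-\infty,s),\mu)} = P_k(s\vert s)^2\,\dot\mu(s).
\end{align*}
Integrating from $A$ to $s$ and sending $A\to-\infty$ (using that $P_k(\cdot\vert A)\to 0$ as the support shrinks, so that the boundary term vanishes) gives
\begin{align*}
\norm{P_k(\cdot\vert s)}^2_{L^2((-\infty,s),\mu)} = \int_{-\infty}^s P_k(x\vert x)^2\,d\mu(x).
\end{align*}

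Dividing by the normalisation $\norm{P_k(\cdot\vert\infty)}^2_{L^2(\Rr,\mu)} = \int_\Rr P_k(y\vert\infty)^2\,d\mu(y)$ obtained from the same identity at $s=+\infty$, one gets the cumulative distribution function \eqref{Def:Law:PsiSquare:GUEGeneral} of $Y_{k+1}$, and the product over $k$ from $0$ to $N-1$ reads
\begin{align*}
\Prob{\lambdab_{1,N}\leq s} = \prod_{k=0}^{N-1}\Prob{Y_{k+1}\leq s} = \Prob{\max_{1\leq k\leq N} Y_k\leq s},
\end{align*}
by independence of the $Y_k$'s, which is the claim~\eqref{EqLaw:LineEnsemble}.

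The main obstacle is checking that the two boundary conditions behave as expected: (i) the vanishing $\norm{P_k(\cdot\vert A)}^2_{L^2((-\infty,A),\mu)}\to 0$ as $A\to-\infty$, which holds under mild hypotheses on $\mu$ (e.g.\ $\mu((-\infty,A))\to 0$ together with polynomial growth control on $P_k(\cdot\vert A)$), and (ii) the fact that $x\mapsto P_k(x\vert x)^2\dot\mu(x)$ is indeed a probability density once normalised by $\norm{P_k(\cdot\vert\infty)}^2_{L^2(\Rr,\mu)}$, which is an automatic consequence of the fundamental theorem of calculus applied to the monotone, bounded function $s\mapsto\norm{P_k(\cdot\vert s)}^2_{L^2((-\infty,s),\mu)}/\norm{P_k(\cdot\vert\infty)}^2_{L^2(\Rr,\mu)}$. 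Both points hold for the classical weights of interest (Gaussian, Laguerre, Jacobi types), so the generalisation is essentially formal.
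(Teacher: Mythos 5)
Your proposal is correct and follows exactly the route the paper intends: the paper omits the proof, stating it is a straightforward generalisation of the proof of theorem~\ref{Thm:GUEmax}, and your argument (Andr\'eieff--Heine--Szeg\"o reduction to diagonal Gram matrix, orthogonality of $\partial_s P_k(\cdot\vert s)$ to $P_k(\cdot\vert s)$ by the monicity/degree argument, integration of the boundary term, and normalisation at $s=+\infty$) reproduces that generalisation faithfully, including the vanishing of the boundary term as $A\to-\infty$.
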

\end{shaded}

The proof is a straightforward generalisation of the previous one and is thus omitted.

\medskip 
\subsubsection{The circle}

\begin{theorem}[Independence structure in matrix ensembles on the circle]\label{Thm:CircleEmax}
\begin{shaded}
Consider random points $ (e^{i\Thetab_{k, N}})_{1 \leq k \leq N} $ in $ \Uu $ with distribution 
\begin{align*}
\Proba{N}{ \Thetab \in d\thetab } = \frac{1}{N!\, \Ze_N } \abs{\Delta(e^{i\thetab})}^2 d\mu^{\otimes N}(\thetab), \qquad d\mu(\theta) := \dot{\mu}(e^{i\theta}) d\theta, \qquad \theta_k \in [0, 2\pi)
\end{align*}

Then, one has
\begin{align}\label{EqLaw:CircleEnsemble}
\boxed{ \Thetab_{1, N} \eqlaw \max_{1 \leq k \leq N} A_k }
\end{align}
where $ (A_k)_k $ is a sequence of independent random variables with distribution given by
\begin{align}\label{Def:Law:PsiSquare:GUECercle}
\Prob{ A_{k + 1} \leq s } = \int_0^s \abs{P_k( e^{i\theta} \vert \color{blue} \theta \color{black} )}^2 \frac{d\mu(\theta)}{\int_0^{2\pi} \abs{P_k( e^{i\alpha} \vert \color{blue} 2\pi \color{black} )}^2 d\mu(\alpha)}
\end{align}
where $ \theta \mapsto P_k( e^{i \theta} \vert t) $ is the monic OPUC for $ L^2( [0, t), \mu) $ . 
\end{shaded}
\end{theorem}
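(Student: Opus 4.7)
The plan is to transcribe the proof of Theorem~\ref{Thm:GUEmax} to the unit circle, replacing the truncated Hermite polynomials by the monic OPUC $P_k(\cdot\vert s)$ associated to the weight $\mu$ restricted to the arc $[0,s)$.

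First, I would rewrite the partition function on the truncated arc. Setting
\begin{align*}
\Ze_N(s) := \frac{1}{N!}\int_{[0,s)^N} \abs{\Delta(e^{i\thetab})}^2 d\mu^{\otimes N}(\thetab),
\end{align*}
the Vandermonde $\Delta(e^{i\thetab})=\det(e^{i(j-1)\theta_k})_{1\leq j,k\leq N}$ may be replaced by $\det(P_{j-1}(e^{i\theta_k}\vert s))_{1\leq j,k\leq N}$ by column operations, since the $P_j(\cdot\vert s)$ are monic. The Andr\'eieff--Heine--Szeg\H{o} formula \eqref{Eq:AndreieffHeineSzego}, together with the orthogonality of the $P_k(\cdot\vert s)$ in $L^2([0,s),\mu)$, diagonalises the Gram matrix and yields
\begin{align*}
\Ze_N(s) = \prod_{k=0}^{N-1}\int_0^s \abs{P_k(e^{i\theta}\vert s)}^2 d\mu(\theta).
\end{align*}

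Second, I would differentiate this expression in $s$ in order to convert the $s$-dependence of the polynomials into a pointwise density. By Leibniz's rule,
\begin{align*}
\frac{d}{ds}\int_0^s \abs{P_k(e^{i\theta}\vert s)}^2 d\mu(\theta) = \abs{P_k(e^{is}\vert s)}^2 \dot{\mu}(e^{is}) + 2\Rep{ \int_0^s \overline{P_k(e^{i\theta}\vert s)}\, \partial_s P_k(e^{i\theta}\vert s)\, d\mu(\theta) }.
\end{align*}
Writing $P_k(z\vert s)=z^k+\sum_{j=0}^{k-1}a_{j,k}(s)z^j$, the monic property forces $\partial_s P_k(z\vert s)$ to have degree at most $k-1$ in $z$, so by orthogonality of $P_k(\cdot\vert s)$ against all lower-degree polynomials in $L^2([0,s),\mu)$ the second term vanishes. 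Since the left-hand side equals $0$ at $s=0$, integrating from $0$ to $s$ gives
\begin{align*}
\int_0^s \abs{P_k(e^{i\theta}\vert s)}^2 d\mu(\theta) = \int_0^s \abs{P_k(e^{i\alpha}\vert \alpha)}^2 d\mu(\alpha),
\end{align*}
an expression in which the polynomial and its arc of orthogonality are locked to the same parameter, so that the integrand is a genuine density in~$\alpha$.

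Third, specialising to $s=2\pi$ yields the normalisation $\Ze_N(2\pi)$, and taking the ratio produces
\begin{align*}
\Prob{\Thetab_{1,N}\leq s} = \frac{\Ze_N(s)}{\Ze_N(2\pi)} = \prod_{k=0}^{N-1}\int_0^s \frac{\abs{P_k(e^{i\alpha}\vert \alpha)}^2}{\int_0^{2\pi}\abs{P_k(e^{i\beta}\vert 2\pi)}^2 d\mu(\beta)}\, d\mu(\alpha) = \Prob{\max_{0\leq k\leq N-1} A_{k+1}\leq s},
\end{align*}
which is precisely \eqref{EqLaw:CircleEnsemble}. The only genuine obstacle is the orthogonality step that annihilates the second term in the differentiation, but this is a literal transcription of the corresponding step in Theorem~\ref{Thm:GUEmax}. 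A minor convention is that ``$\max$'' on the circle is interpreted via the linear order of the angles in $[0,2\pi)$, consistent with the ordering implicit in~$\Ze_N(s)$ above.
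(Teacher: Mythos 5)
Your proposal is correct and is essentially the paper's own argument: the paper omits the proof precisely because it is the verbatim transcription of Theorem~\ref{Thm:GUEmax} (Andr\'eieff--Heine--Szeg\H{o} with the truncated monic OPUC, differentiation in the arc endpoint, vanishing of the cross term since $\partial_s P_k(\cdot\vert s)$ has degree $\leq k-1$ and is therefore orthogonal to $P_k(\cdot\vert s)$, then integration and normalisation at $s=2\pi$), which is exactly what you carried out.
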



\begin{remark}\label{Rk:WeylCUEToeplitz}
The Weyl integration formula reads
\begin{align}\label{Eq:WeylIntegration}
\int_{\Ue_N} \det(f(U)) dU = \frac{1}{N!}\oint_{\Uu^N} f^{\otimes N}(\ub) \abs{\Delta(\ub)}^2 \frac{d^*\ub}{\ub}
\end{align}
where $ dU $ is the normalised Haar measure on the unitary group $ \Ue_N $ and $ \det(f) \in L^1(\Ue_N, dU) $.

This formula defines the \textit{Circular Unitary Ensemble} or $ CUE_N $ \cite{DeiftItsKrasovskyImpetus, DiaconisRMTSurvey, ForresterBook} that has been studied extensively, notably using the Andr\'eieff-Heine-Szeg\"o formula \eqref{Eq:AndreieffHeineSzego} that links it to a Toeplitz determinant of symbol $f$ \cite[\textit{fact five}]{DiaconisRMTSurvey}
\begin{align}\label{Eq:CUE=Toeplitz}
\int_{\Ue_N} \det(f(U)) dU = \det(\Tt_N(f)), \qquad \Tt_N(f) := \prth{ \, \crochet{z^{i - j}} \! f(z) }_{1 \leq i, j \leq N}
\end{align}

The previous theorem with $ \dot{\mu} = 1 $ treats thus the case of the $ CU\!E_N $. 
\end{remark}

\medskip

In view of the last results, an interesting question emerges:

\begin{question}\label{Q:CouplingGUE:max}
Can one find an almost sure construction that proves \eqref{EqMax:GUE} or \eqref{EqLaw:LineEnsemble} ?
\end{question}

In fact, question~\ref{Q:CouplingGUE:max} is already relevant at the level of the Ginibre ensemble, as the identity in law $ \lambdab_1^{\GinE_{N + 1}} \eqlaw \max\{ \lambdab_1^{\GinE_N} , \gammab_{N + 1} \} $ is not proven with an almost sure construction. 

\medskip
\begin{remark}\label{Rk:BHNY}
By ``almost sure construction'', we mean a deterministic equality that would imply the probabilistic result once ``probabilised''. Such an identity was for instance given in \cite{BHNY} for the characteristic polynomial of a random matrix distributed according to the $ CUE_N $. Setting $ Z_{U_N}(X) := \det(X I_N - U_N) $, one has $ Z_{U_N}(1) \eqlaw \prod_{k = 1}^N X_k $ where $ X_k $ are explicit independent random variables. Such a decomposition comes from the decomposition into a product of transvections of a unitary matrix which implies the probabilistic identity $ U_N \eqlaw H_{u_1} H_{u_2} \dots H_{u_N} $ where the random transvections $ H_{u_i} : x \mapsto x - \prth{ 1 + \frac{1 - e_1^* u_i}{ 1 - \overline{e_1^* u_i } } }  \frac{ ( e_1 - u_i )^* x}{ \abs{e_1 - u_i}^2 }  (e_1 - u_i) $ are independent (the vector $ u_i $ is selected uniformly on the unit sphere and $ (e_k)_k $ is the canonical basis of $ \Cc^N $). The previous identity in law comes then from the deterministic identity (see \cite{BHNY} for the details) $$ \det(I_N - H_{u_1} H_{u_2} \dots H_{u_N}) = (1 - e_1^* H_{u_1} e_1)\det(I_{N - 1} -  H_{u_2} \dots H_{u_N}) $$ 
Taking the Mellin transform of this identity gives $ \Esp{ Z_{U_N}(1)^s \overline{Z_{U_N}(1)^w} } = \prod_{k = 1}^N \Esp{ X_k^s \overline{X_k^w} } $, an identity equivalent to the celebrated Selberg integral in the particular case $ \beta = 2 $ (see \cite{BNRJacobi} for a generalisation to the $ C\beta E_N $). Such decompositions allow thus to give new proofs of classical identities with almost no computations, and a better understanding of why some ``magical'' identities hold.
\end{remark}


\medskip
\begin{remark}
Although there exists a natural Markovian coupling between $ GU\!E_N $ and $ GU\!E_{N + 1} $, the \textit{minor process} \cite{BorodinGorinBeta}, the Courant-Fisher minimax characterisation of the eigenvalues of a Hermitian matrix is not likely to give the desired result when probabilised with this coupling since it can be stationnary (this is seemingly not the case of $ (W_k)_k $).

Of course, several other Markovian couplings are possible, although arguably less natural than the minor one. We will come back to this problem in the future.  
\end{remark}

\medskip 
\subsection{Note on $ \lambdab_{N, N} $}\label{SubSec:RMT:Min}

We quickly treat the case of the minimal eigenvalue $ \lambdab_{N, N} $ in any previously defined matrix ensemble. This random variable is also the minimum of $ N $ independent random variables, as, using the Andr\'eieff-Heine-Szeg\"o formula \eqref{Eq:AndreieffHeineSzego}, one obtains
\begin{align*}
\Prob{\lambdab_{N, N} \geq s} = \frac{1}{\Ze_N(\infty)} \prod_{k = 0}^{N - 1} \norm{ \widetilde{H}_k(\cdot \vert s) }^2_{L^2([s, +\infty), \mu)}
\end{align*}

This is the half-line $ [s, +\infty) $ which is relevant in this case, and the orthogonal polynomials for the restricted measure were denoted $ \widetilde{H}_k(\cdot \vert s) $ in remark~\ref{Rk:GUEmax}. Using the same manipulations as in theorem~\ref{Thm:GUEmax}, one then finds
\begin{align*}
\Prob{ \lambdab_{N, N} \geq s} = \frac{1}{\Ze_N(\infty)} \prod_{k = 0}^{N - 1} \norm{ \widetilde{H}_k(\cdot \vert \cdot) }^2_{L^2([s, +\infty), \mu)} = \prod_{k = 0}^{N - 1} \frac{\norm{ \widetilde{H}_k(\cdot \vert \cdot) }^2_{L^2([s, +\infty), \mu)}}{\norm{ \widetilde{H}_k(\cdot \vert -\infty) }^2_{L^2(\Rr, \mu)} }
\end{align*}

In the case of the $ \gue_N $, one has for instance with independent $ (\widetilde{W}_k)_k $
\begin{align}\label{EqMax:Min:GUE}
\boxed{ \lambdab_{N, N}^{(GU\!E)} \eqlaw \min_{0 \leq k \leq N - 1} \widetilde{W}_k}
\end{align}
where
\begin{align*}
f_{\widetilde{W}_k }(x) = \frac{\widetilde{H}_k(x \vert x)^2}{\sqrt{k!}} \frac{e^{-x^2/2}}{\sqrt{2\pi}} 
\end{align*}

Here again, the most interesting question is the equivalent of Question~\ref{Q:CouplingGUE:max}, i.e.

\begin{question}\label{Q:CouplingGUE:min}
Can one find an almost sure construction that proves \eqref{EqMax:Min:GUE} or its generalisations to other determinantal measures~?
\end{question}

For instance, the construction $ \Hb_{\! N + 1} = \Hb_{\! N  } \oplus (\Xb_{N + 1}, a_{N + 1}) $ implies a Markovian coupling. Here, $ \oplus $ designates the operation that puts $ \Hb_{\! N  } $ in the top $ N \times N $ block of an $ (N + 1)\times (N + 1) $ matrix that has the vector $ \Xb_{N + 1} $ (of size $N$) in the upper right block, its transconjugated in the lower left block and the real $ a_{N + 1} $ at the $ (N + 1, N + 1) $ index (or the lower right block). 

With such a construction (with no probability), one looks for an equality of the type
\begin{align*}
\lambda_{\min}(\Hb_{\! N + 1}) = \min\ensemble{\lambda_{\min}(\Hb_{\! N }), \ \Fe_{N + 1}( \Xb_{N + 1}, a_{N + 1} )  }
\end{align*}
for a certain functional $ \Fe_{N + 1} $. The probabilisation of such an identity would then imply the result, with $ \widetilde{W}_N \eqlaw \Fe_{N + 1}(\Xb_{N + 1}, a_{N + 1} ) $. We will come back to this problem in the future.

\medskip\medskip
\subsection{Painlev\'e representation of $ \He_n $ and a new expression of $ F_{\TW_2} $}\label{SubSec:RMT:Painleve}

\subsubsection{Painlev\'e representation of $ \He_n $} 

The function $ \He_n $ used in \eqref{Def:Law:PsiSquare:GUE} can be described in a more explicit way. 

\medskip

Recall that the monic orthogonal polynomials $ (P_k)_{k \geq 0} $ for $ L^2([a, b], \mu) $ with $ a, b \in \overline{\Rr} $ are given by the following ``expectations of characteristic polynomials of random matrix ensembles'' \cite[(2.2.10) p. 27]{Szego} 
\begin{align*}
P_n(X) = \frac{ 1 }{ n! \, D_n } \int_{ \crochet{a, b}^n } \prod_{k = 1}^n (X - t_k) \Delta(\tb)^2 d\mu^{\otimes n}(\tb), \qquad D_n := \frac{1}{n!} \int_{ \crochet{a, b}^n }  \Delta(\tb)^2 d\mu^{\otimes n}(\tb)
\end{align*}

In particular, for
\begin{align*}
\mu(dx) := \frac{e^{-x^2/2} }{\sqrt{2\pi}} dx, \qquad a := -\infty, \qquad b := t
\end{align*}
one gets
\begin{align*}
H_n(X\vert t) = \frac{ 1 }{ n! \, D_n(t) } \int_{ (-\infty, t)^n } \prod_{k = 1}^n (X - t_k) \Delta(\tb)^2 d\mu^{\otimes n}(\tb), \qquad D_n(t) := \frac{1}{n!} \int_{ (-\infty, t)^n }  \Delta(\tb)^2 d\mu^{\otimes n}(\tb)
\end{align*}

If $ \Hb_{\! n} \sim GU\!E_n $ with eigenvalues $ \lambdab_{1, n} \geq \lambdab_{2, n} \geq \cdots  $, and if $ Z_{\Hb_{\! n}}(x) := \det(x I_n - \Hb_{\! n}) $, one thus has 
\begin{align*}
H_n(x\vert t) = \frac{\Esp{ Z_{\Hb_{\! n}}(x) \Unens{ \lambdab_{1, n} \leq t} } }{\Prob{\lambdab_{1, n} \leq t } } = \Esp{ Z_{\Hb_{\! n}}(x) \big\vert \lambdab_{1, n} \leq t} 
\end{align*}

Using the notations of \cite{ForresterWitteTau2and4}\footnote{
Note that the Gaussian measure is not the probabilistic one in \cite{ForresterWitteTau2and4} and in \cite{ForresterBook, TracyWidomCMP}, hence some slight changes of conventions.
}, define for all $ a \in \Nn $ 
\begin{align*}
\widetilde{E}_n(x ; a) := \frac{1}{n! } \int_{ (-\infty, x)^n } \prod_{j = 1}^n (x - t_j)^a \Delta(\tb)^2 d\mu^{\otimes n}(\tb) 
\end{align*}

Then,
\begin{align}\label{Eq:HnXXwithEtilda}
\boxed{\He_n(x) := H_n( x \vert x) = \frac{ \widetilde{E}_n(x ; 1) }{ \widetilde{E}_n(x ; 0) } }
\end{align}

The quantities $ \widetilde{E}_n(x ; a) $ were studied by Forrester-Witte in \cite[(1.7)]{ForresterWitteTau2and4}. This article is an application of the Jimbo-Miwa $ \sigma $-form approach to the Painlev\'e IV equation \cite{JimboMiwaSigmaForm} and the $ \tau $-function theory of the Painlev\'e II to VI equations due to Okamoto \cite{Okamoto0, Okamoto1, Okamoto2, Okamoto3} later refined by Noumi-Yamada \cite{NoumiYamadaPainleve}~; see also Forrester \cite[ch. 8 p. 328]{ForresterBook} for a historical and a self-contained introduction on the topic and Noumi \cite{NoumiPainleveBook} for a detailed study of $ P_{\!I\!I} $ and $ P_{\! IV} $. 

It is proven in \cite[(1.8)/(4.14)]{ForresterWitteTau2and4} that
\begin{align*}
\widetilde{E}_n(x ; a) = \widetilde{E}_n(x_0 ; a) \exp\prth{ \int_{x_0}^x U_n(t ; a) dt }
\end{align*}
where $ y = U_n(\cdot \, ; a) $ satisfies the following \textit{$ \sigma $-form of the Painlev\'e IV equation} \cite[(1.9)/(4.15)]{ForresterWitteTau2and4}
\begin{align}\label{Def:Painleve:SigmaForm:IV}
(\sigma \mbox{-} P_{\! IV}^{ }) : 
\begin{cases}
(y'')^2 - 4 (t y' - y)^2 + 4 y' (y' - 2a) (y' + 2\purple{n}) = 0   \\
y(t ) = -2\purple{n} t - \frac{ \purple{n}(a + \purple{n}) }{ t } + O\prth{ \frac{ 1}{t^3} }  \qquad\mbox{when } t \to \blue{-}\infty. 
\end{cases}
\end{align}
the boundary conditions being given in \cite[(1.10)/(4.18)]{ForresterWitteTau2and4}.
%
%
%

Notice that
\begin{align*}
\widetilde{E}_n(x ; a) := \frac{1}{n! } \int_{ (-\infty, x)^n } \prod_{j = 1}^n (x - t_j)^a \Delta(\tb)^2 d\mu^{\otimes n}(\tb) \ \ \equivalent{x \to \blue{+}\infty} \ \ x^{n a} \times \widetilde{E}_n(+\infty ; 0)  
\end{align*}
implying that 
\begin{align*}
\He_n(x) \ \ \equivalent{x \to \blue{+}\infty} \ \ x^n 
\end{align*}

As a result, one cannot choose $ x_0 = +\infty $ except in the case $ a = 0 $ (which is the case considered by Tracy and Widom).

\medskip

For general $ x_0 $, one thus has
\begin{align}\label{Eq:DensityWkGUE}
\He_n(x) e^{-x^2/4} =  \He_n(x_0) e^{-x_0^2/4} \, \exp\prth{ \int_{x_0}^x \prth{ U_n(t ; 1) -  U_n(t ; 0) - \frac{t}{2} } dt }
\end{align}

\medskip

For $ a = 0 $, Tracy and Widom's result 
\begin{align}\label{Eq:CV:GUE2TW}
\frac{\lambdab_{1, N} - 2\sqrt{N}}{N^{-1/6}} \cvlaw{N}{+\infty} \TW_2
\end{align} 
gives with the notations of \cite[(1.18)/(5.2), (5.3)]{ForresterWitteTau2and4} (where $ u $ is replaced by $ \sigma $)
\begin{align*}
\widetilde{E}_N\prth{ 2 \sqrt{N} + N^{-\frac{1}{6}} s \, ; \, a = 0 } \tendvers{N}{+\infty} \exp\prth{-\int_s^{+\infty} r(t) dt }, \qquad r(t) =: \sigma(t ; a = 0)
\end{align*}
and for $ a \geq 1 $ \cite[(1.19)/(5.6), (5.7)]{ForresterWitteTau2and4}
\begin{align*}
C(s_0) \, e^{- a (2 \sqrt{N} + N^{-1/6} s )^2/4}\widetilde{E}_N\prth{ 2 \sqrt{N} + N^{-\frac{1}{6}} s \,  ; \, a } \tendvers{N}{+\infty} \widetilde{E}_\infty^\soft\prth{ s_0  ; \, a } \exp\prth{ \int_{s_0}^s \sigma(t ; a) dt } 
\end{align*} 
were $ y := \sigma(\cdot \, ; a) $ satisfies the following \textit{$ \sigma $- form of the Painlev\'e II equation} \cite[(1.17)/(5.10)]{ForresterWitteTau2and4}
\begin{align}\label{Def:Painleve:SigmaForm:II}
(\sigma\mbox{-}P_{\!I\!I}^{ }) 
\begin{cases}
(y'')^2 + 4 y'\, \prth{ \vphantom{a^{a^{a}}}  t y' - y } - 4 (y')^3 - a^2 = 0\\ 
y(t ) = \frac{t^2}{4} + \frac{ 4 a^2 - 1}{ 8 t } + O\prth{ \frac{ 1}{t^3} }  \qquad\mbox{when } t \to \blue{-}\infty. 
\end{cases}
\end{align}
the boundary conditions being given in \cite[(1.20)/(4.11)]{ForresterWitteTau2and4}
%
%
%

\medskip
\subsubsection{Required estimates for $ f_{W_n} $}\label{SubSubSec:RMT:Painleve:Estimates}

$ $

\medskip

\textbf{Caveat~:} We will now be concerned with the ``physicist convention'' of Tracy-Widom \cite{TracyWidomPLB, TracyWidomCMP} and Forrester-Witte \cite{ForresterWitteTau2and4} which is also the convention of Wu-Xu-Zhao \cite{WuXuZhao}. As a result, the probability expressed in theorem~\ref{Thm:Init:GUEmax} is given with $ t\sqrt{2} $ and the Gaussian density is $ e^{-t^2}/\sqrt{\pi} $. We will set $ \widehat{W}_n := \sqrt{2}W_n $ for the rescaled random variable. 

We will moreover use the function $ \sigma $ which is denoted by $ u $ in \cite{ForresterWitteTau2and4} and the function $p$ which is denoted by $ u $ in \cite{WuXuZhao}. These conventions are the ones of \cite[32.6(iii) p. 729]{NISThandbook} for the Hamiltonian description of the Painlev\'e functions. 

The Hamiltonian description goes as follows~: denote by $ H(q, p, t) $ the Hamiltonian of the Painlev\'e $ I\!I $ equation given by \cite[32.6.9 p. 729]{NISThandbook}
\begin{align*}
H(q, p, t) := \frac{p^2}{2} - \prth{q^2 + \frac{t}{2}}p - \prth{\alpha + \frac{1}{2}}q 
\end{align*}

The Hamilton-Jacobi equations $ q' = \partial_p H $, $ p' = -\partial_q H $ are given by \cite[32.6.10 \& 32.6.11 p. 729]{NISThandbook}
\begin{align*}
q' = p - q^2 - \tfrac{t}{2}, \qquad p' = 2qp + \alpha + \tfrac{1}{2}
\end{align*}

The equation satisfied by $q$ is then $ P_{\!I\!I}(\alpha) $ given by \eqref{Def:PainlevéII} and the equation satisfied by $p$ is the Painlev\'e $ X\!X\!X\!IV $ equation $ P_{\!X\!X\!X\!IV}(\alpha) $ (numbered in this way in the original classification of Painlev\'e and Gambier) given by \cite[32.6.12 p. 729]{NISThandbook} 
\begin{align}\label{Def:Painlevé:PXXXIV}
p p'' = \tfrac{(p')^2}{2}  + 2 p^3 - t p^2 - \tfrac{1}{2} (\alpha + \tfrac{1}{2})^2
\end{align}

Last, the equation satisfied by the function $ \sigma : t \mapsto H(q(t), p(t), t) $ is $ \sigma\mbox{-}P_{\!I\!I}(\alpha) $ given by \eqref{Def:Painleve:SigmaForm:II}. This function allows to ``reconstruct'' $ (p, q) $ with the relations \cite[32.6.14 \& 32.6.15 p. 729]{NISThandbook}~:
\begin{align}\label{Eq:Painlevé:PQwithSigma}
q = \frac{ 4 \sigma'' + 2\alpha + 1 }{8 \sigma'}, \qquad p = -2\sigma'
\end{align}

\medskip

In \cite[(1.6)]{WuXuZhao}, Wu, Xu and Zhao study
\begin{align*}
\Hb_{\! n}(x \vert \alpha, \omega) := \frac{1}{n!}\int_{\Rr^n} \prod_{k = 1}^n \abs{x - t_j}^{2\alpha} \prth{ \Unens{t_j \leq x} + \omega \Unens{t_j > x} } \Delta(\tb)^2 \mu^{\otimes n}(d\tb), \qquad \mu(dt) = e^{-t^2}\frac{dt}{\sqrt{\pi}}
\end{align*}

In particular, 
\begin{align*}
\Hb_{\! n}(x \vert \alpha, 0) := \frac{1}{n!}\int_{\Rr^n} \prod_{k = 1}^n \abs{(x - t_j)_+}^{2\alpha}  \Delta(\tb)^2 \mu^{\otimes n}(d\tb) = \widetilde{E}_n(x ; a = 2\alpha)
\end{align*}
as $ x_+ = \abs{x_+} $, and
\begin{align*}
\He_n(x) =  \frac{ \Hb_{\! n}(x \vert 1/2, 0)  }{\Hb_{\! n}(x \vert 0, 0)}
\end{align*}

Wu, Xu and Zhao \cite[thm. 3, (1.24)]{WuXuZhao} prove moreover that (in particular for $ \alpha = \frac{1}{2} $/$ a = 1 $ and $ \pink{\omega} = 0 $)
\begin{align*}
& \frac{d}{d\mu_n} \ln \Hb_{\! n}(\mu_n \vert \alpha, \pink{\omega}) = \sqrt{2n} \prth{ 2\alpha + \frac{\sigma(s\vert \alpha, \pink{\omega})}{n^{1/3} } + \alpha \frac{p(s\vert \alpha, \pink{\omega}) + s}{n^{2/3} } + O_s\prth{\frac{1}{n} } }, \\
& \hspace{+10cm} \mu_n := \sqrt{2n} + n^{-\frac{1}{6}} \frac{s}{\sqrt{2}}
\end{align*}
where $ \sigma(\cdot \vert \alpha = 1/2, \omega = 0) $ is the $ \sigma $-form of the Painlev\'e II function given in \eqref{Def:Painleve:SigmaForm:II} and $ p(\cdot \vert \alpha = \frac{1}{2}, \pink{\omega} = 0 ) $ is solution of the Painlev\'e $ X\!X\!X\!IV $ equation with the following particular boundary condition at $ \infty $ \cite[(1.20) \& (1.21)]{WuXuZhao} for $ \alpha \neq 0 $~:
\begin{align}\label{Def:Painlevé:PXXXIV:bis}
(P_{\!X\!X\!X\!IV})
\begin{cases}
p(s) p''(s)   = \frac{p'(s)^2}{2}  + 4 p(s)^3 + 2s p(s)^2 - 2 \alpha^2 \\
\qquad \ p(s) = \frac{\alpha}{\sqrt{s}}(1 - \alpha s^{-3/2} + O(s^{-3}) ), \qquad s \to \infty, \quad \arg(s) \in (-\frac{\pi}{3}, \frac{\pi}{3}]
\end{cases}
\end{align}

Last, \cite[(1.22), (1.17)]{WuXuZhao}/\cite[(5.11), \textbf{--}\,]{ForresterWitteTau2and4}~:
\begin{align}\label{Eq:Painleve:SigmaFormEstimates:II}
\begin{aligned} 
\sigma(s \vert \alpha, \pink{\omega} = 0) & = -\frac{s}{2} + \frac{16\alpha^2 - 1}{8} s^{-2} + O(s^{-7/2}), \qquad s \to -\infty \\
\sigma(s \vert \alpha, \pink{\omega} = 0) & = - \alpha \sqrt{s} \prth{ 1 + \alpha s^{-3/2} + O(s^{-3}) }, \qquad s \to \infty, \quad \arg(s) \in (-\frac{\pi}{3}, \frac{\pi}{3}]
\end{aligned}
\end{align}

As a result, using the physicist convention for the Gaussian, one has 
\begin{align*}
f_{W_n}(s) & = \frac{\He_n(s)^2 e^{-\blue{s^2} }}{n!\Ze_n\sqrt{2\pi} } = \prth{ \frac{ \Hb_{\! n}(s \vert 1/2, 0)  }{\Hb_{\! n}(s \vert 0, 0)} }^2 \frac{  e^{-\blue{s^2} }}{n!\Ze_n\sqrt{ \pi} } \\
               & = \exp\prth{ 2\ln\He_n(c) +  2\int_c^s \frac{d}{dt}\ln\He_n(t) dt } \frac{  e^{-\blue{s^2} }}{n!\Ze_n\sqrt{2\pi} } \\
               & = \exp\prth{ 2\ln\He_n(c) +  2\int_c^s \prth{ \frac{d}{dt} \ln \Hb_{\! n}(t \vert 1/2, 0) -   \frac{d}{dt} \ln \Hb_{\! n}(t \vert 0, 0) } dt } \frac{  e^{-\blue{s^2} }}{n!\Ze_n\sqrt{ \pi} } \\
               & = \frac{\He_n(c)^2 e^{-\blue{c^2} }}{n!\Ze_n\sqrt{ \pi} } \exp\prth{ 2\int_c^s \prth{ \frac{d}{dt} \ln \Hb_{\! n}(t \vert 1/2, 0)  -  \frac{d}{dt} \ln \Hb_{\! n}(t \vert 0, 0) - \blue{t} } dt } 
\end{align*}

Set
\begin{align*}
t_n & := \sqrt{2n} + n^{-\frac{1}{6}}\frac{t}{\sqrt{2}}, \qquad
s_n := \sqrt{2n} + n^{-\frac{1}{6}}\frac{s}{\sqrt{2}}, \qquad 
c_n := \sqrt{2n} + n^{-\frac{1}{6}}\frac{c}{\sqrt{2}} \\
\delta \sigma & := \sigma\prth{\cdot \vert \tfrac{1}{2}, \pink{\omega} } - \sigma\prth{\cdot \vert 0, \pink{\omega} } = \sigma\prth{\cdot \vert a = 1, \pink{\omega} } - \sigma\prth{\cdot \vert a = 0, \pink{\omega} },  
\qquad \pink{\omega} := 0
\end{align*}

Then, 
%
%
\begin{align*}
f_{\widehat{W}_n}(s_n)   & =: \frac{\He_n(c_n)^2 e^{-\blue{c_n^2} } }{n! \Ze_n \sqrt{ \pi}} 
                    \exp\prth{ \!  2 \! \int_{c}^{s}  \! \prth{  \! \sqrt{2n} +  \sqrt{2n} \frac{\delta\sigma^{(n)}(t_n)}{n^{1/3}} + \sqrt{2n} \frac{1}{2} \frac{p^{(n)}(t_n\vert \frac{1}{2}, \pink{0}) + t_n}{n^{2/3} }  - \blue{t_n}  \! } n^{ - \frac{1}{6} } dt   }  \\
                & = \frac{\He_n(c_n)^2 e^{-\blue{c_n^2} } }{n! \Ze_n \sqrt{ \pi}} 
                    \exp\prth{ \! n^{ - \frac{1}{6}} 2\sqrt{2n} \int_{c}^{s} \prth{ 1 + \frac{\delta\sigma^{(n)}(t_n)}{n^{1/3}} +   \frac{p^{(n)}(t_n\vert \frac{1}{2}, \pink{0}) + t_n}{2 \, n^{2/3} } - \frac{\blue{t_n}}{\sqrt{2n}} } dt    } \\
                & = \frac{\He_n(c_n)^2 e^{-\blue{c_n^2} } }{n! \Ze_n \sqrt{ \pi} } 
                         \exp\prth{  n^{\frac{1}{3}  } 2\sqrt{2} 
                                  \int_{c}^{s}  \prth{ \frac{\delta\sigma^{(n)}(t_n)}{n^{1/3}} +  \frac{p^{(n)}(t_n\vert \frac{1}{2}, \pink{0}) + t_n}{ 2 \, n^{2/3} }  - \frac{ n^{ - \frac{1}{6} } \blue{t/\sqrt{2} } }{n^{1/2} \sqrt{2}} } dt  } \\
                & = \frac{\He_n(c_n)^2 e^{-\blue{c_n^2} } }{n! \Ze_n \sqrt{ \pi} } 
                         \exp\prth{  n^{\frac{1}{3}  } 2\sqrt{2} 
                                  \int_{c}^{s}  \prth{ \frac{\delta\sigma^{(n)}(t_n)}{n^{1/3}} +  \frac{p^{(n)}(t_n\vert \frac{1}{2}, \pink{0}) + t_n}{ 2 \, n^{2/3} }  - \frac{   \blue{t } }{2\, n^{2/3} } } dt  } \\
                & = \frac{\He_n(c_n)^2 e^{-\blue{c_n^2} } }{n! \Ze_n \sqrt{ \pi} } 
                         \exp\prth{    2\sqrt{2} 
                                  \int_{c}^{s}  \prth{  \delta\sigma(t )  +  \frac{p(t \vert \frac{1}{2}, \pink{0}) + t}{ 2 \, n^{1/3} }  - \frac{   \blue{t } }{2\, n^{1/3} } + O_t\prth{ \frac{1}{n^{2/3}} } } dt  } \\
                & = \frac{\He_n(c_n)^2 e^{-\blue{c_n^2} } }{n! \Ze_n \sqrt{ \pi} } 
                         \exp\prth{    2\sqrt{2} 
                                  \int_{c}^{s}  \prth{  \delta\sigma(t )  +  \frac{p(t \vert \frac{1}{2}, \pink{0})  }{ 2 \, n^{1/3} }    + O_t\prth{ \frac{1}{n^{2/3}} } } dt  }
\end{align*}

Using the second relation of \eqref{Eq:Painlevé:PQwithSigma}, one then has
\begin{align*}
p(t \vert \tfrac{1}{2}, \pink{0} ) = -2\frac{d}{dt}\sigma(t \vert \tfrac{1}{2}, \pink{0} )
\end{align*}
and
\begin{align*}
f_{\widehat{W}_n}(s_n)    
                & = \frac{\He_n(c_n)^2 e^{-\blue{c_n^2}  + 2\sqrt{2} n^{-1/3} \sigma(c \vert 1/2, 0) } }{n! \Ze_n \sqrt{ \pi} } 
                         \exp\prth{    2\sqrt{2} 
                                  \int_{c}^{s}    \delta\sigma(t ) dt - O\prth{ n^{-1/3} \sigma(s \vert \tfrac{1}{2}, 0)} } \\
                & = \frac{\He_n(c_n)^2 e^{-\blue{c_n^2}  + 2\sqrt{2} \, n^{-1/3} \sigma(c \vert 1/2, 0) } }{n! \Ze_n \sqrt{ \pi} } 
                         \exp\prth{    2\sqrt{2} 
                                  \int_{c}^{s}    \delta\sigma(t ) dt}\prth{ 1 - O\prth{ n^{-1/3} \sigma(s \vert \tfrac{1}{2}, 0)} }  
\end{align*}
namely
\begin{align}\label{Eq:Estimate:WuXuZhao}
\begin{aligned}
&   f_{\widehat{W}_n}(s_n) - f_{\widehat{W}_n}(c_n) e^{  2\sqrt{2} \, n^{-1/3} \sigma(c \vert 1/2, 0) }  \exp\prth{    2\sqrt{2} \int_c^s    \delta\sigma(t ) dt}  \\
                                 & \hspace{+7cm} = - O\prth{ n^{-1/3} \sigma(s \vert \tfrac{1}{2}, 0) e^{   2\sqrt{2} \int_c^s \delta\sigma(t ) dt}  }   
\end{aligned}
\end{align}


The proof of theorem~\ref{Thm:NewExprTW2} will impose to choose the constant $ c \equiv c^*_n $ such that
\begin{align*}
n^{1/6} f_{\widehat{W}_n}(c_n) e^{  2\sqrt{2} \, n^{-1/3} \sigma(c \vert 1/2, 0) } = 1, 
\qquad c_n := \sqrt{2n} + n^{-\frac{1}{6}}\frac{c}{\sqrt{2}}
\end{align*}
%
%
%

One can show that $ c^*_n \to +\infty $ and that $ s \mapsto \sigma(s \vert \tfrac{1}{2}, 0) e^{  - 2\sqrt{2} \int_s^{+\infty} \delta\sigma(t ) dt} \in L^1([s, +\infty)) $ for all $s$ using the \S~\ref{SubSubSec:RMT:Painleve:Comparison}.

\medskip
\subsubsection{Poisson approximation for maxima of independent random variables} 

The problem of estimating the \textit{maximum} of a sequence of independent random variables $ (W_k)_{1 \leq k \leq n} $ can be classically restated as a problem of Poisson approximation for \textit{sums} of independent random variables \cite[ch. 3]{Feidt} writing
\begin{align*}
\ensemble{\max_{1 \leq k \leq n} W_k \leq x} = \ensemble{\sum_{k = 1}^n \Unens{W_k > x} = 0}
\end{align*}

Since the random parametric indicators $ B_k := \Unens{W_k > x} $ are Bernoulli random variables of expectation $ p_k := \Prob{W_k > x} $, one can proceed to a Poisson approximation of $ S_n := \sum_{k = 1}^n B_k $ \cite[\S~4.1]{Ross} 
\begin{align*}
\Prob{ S_n = 0} - \Prob{\Poisson\prth{ \sum_{k = 1}^n p_k } = 0 } = O\prth{ \sum_{k = 1}^n p_k^2  \times \min\ensemble{1, \frac{1}{\sum_{k = 1}^n p_k} } }
\end{align*}
with
\begin{align*}
\Prob{\Poisson\prth{ \sum_{k = 1}^n p_k } = 0 } = e^{- \sum_{k = 1}^n p_k } 
\end{align*}

In the $ GU\!E_{n + 1} $ case, this last Poisson approximation is performed for the previous independent random variables $ (W_k)_k $ defined in \eqref{Def:Law:PsiSquare:GUE}. Set
\begin{align*}
s_n := 2\sqrt{n} + n^{-1/6}s
\end{align*}

As the first random variables do not depend on $ n $, the first probabilities are very small~: for $ k = O(1) $
\begin{align*}
p_k := \Prob{W_k > s_n } \leq \frac{1}{s_n^2} \Esp{ \abs{W_k}^2 } = O_s\prth{ \frac{1}{n} }
\end{align*}

We will see in \eqref{Eq:Estimate:limfWNalpha} that $  f_{W_{m_n}}(s_n) \sim n^{-1/6} \Qe(s + c) $ with $ \Qe \in L^2([s, +\infty)) $ for all $s$ if $ m_n = n - c n^{1/3} $. As a result, with $ x_n = 2\sqrt{n} + n^{-1/6} x $
\begin{align*}
p_{m_n} := \Prob{W_{m_n} > s_n } \leq 
               \int_s^{+\infty} \prth{\frac{x_n}{s_n}}^{\!\! 2} f_{W_{m_n}}(x_n) n^{-1/6} dx =  O_s\prth{ n^{-1/3} }
\end{align*}

The relevant part of the maximum mass is thus concentrated on the extreme random variables $ (W_{[n\alpha]})_{1 - \varepsilon_n \leq \alpha \leq 1} $ with $ \alpha = 1 - c n^{-2/3} $ and one can neglect the first random variables to only consider the contribution of the last ones~: with $ m := \pe{ n (1 -  \varepsilon_n) } < n $, $ \varepsilon = c n^{-2/3} $, one has with obvious notations
\begin{align*}
\delta_n  := \abs{\Prob{S_n = 0} - \Prob{S_{m\to n} = 0}} & = \Prob{ S_{m\to n} = 0, S_n - S_{m\to n} \geq 1} \\
             & \leq \Esp{ S_n - S_{m\to n} } = \Esp{ S_m } = \sum_{k = 1}^m p_k
\end{align*}
which implies  
\begin{align*}
\Prob{S_n = 0} = \Prob{S_{m\to n} = 0} + O\prth{ \sum_{k = 1}^m p_k } = \Prob{S_{m \to n} = 0} + O_s\prth{ n^{-1/3} }
\end{align*}
for the choice
\begin{align}\label{Eq:ChoixDeM}
m = n - O(n^{1/3})
\end{align}

For this choice of $m$, we also set
\begin{align*}
P_n := \sum_{k = 1}^n p_k, \qquad P^*_n := \sum_{k = m + 1}^n p_k, \qquad  m := \pe{ n (1 -  \varepsilon_n) } = \pe{n - c n^{1/3}}
\end{align*}
so that
\begin{align*}
P_n = P_n^* + O_s\prth{ n^{-1/3} }  
\end{align*}

Last, we set
\begin{align*}
P_n^{(2)} := \sum_{k = 1}^n p_k^2  =  P_n^{*\!,\, 2} + O_s\prth{ n^{-1/3} }, \qquad P_n^{*\!,\, 2} := \sum_{k = m + 1}^n p_k^2
\end{align*}

If $ P_n = O(1) $, the Poisson approximation amounts thus to 
\begin{align*}
\Prob{ S_n = 0} - \Prob{\Poisson\prth{ P_n } = 0 } = O\prth{ P_n^{(2)} }, 
\end{align*}
with
\begin{align*}
P_n = P_n^* + O_s\prth{n^{-1/3}}, \quad P_n^{(2)} = P_n^{*\!,\, 2} + O_s\prth{ n^{-1/3} }
\end{align*}

\medskip\newpage
\subsubsection{A new expression of $ F_{\TW_2} $}

\begin{theorem}[Expression of $ F_{\TW_2} $]\label{Thm:NewExprTW2}
\begin{shaded}
One has
\begin{align}\label{EqTW:PainlevureNouveau}
\Prob{ \frac{\lambdab_{1, N} - 2\sqrt{N}}{N^{-1/6}} \leq \sqrt{2}\, s } \tendvers{N}{+\infty } \boxed{\Prob{\TW_2 \leq \sqrt{2}\, s} = \exp\prth{ - \Qe^2 * \id_+(s) } }
\end{align}
with $ f*g(x) := \int_\Rr f(t) g(x - t) dt = g*f(x) $, $ \id_+   : x \longmapsto x_+ := x \Unens{x \geq 0} $ and 
\begin{align}\label{Def:Q^2PII}
\Qe(x) :=   \exp\prth{ - \int_x^{+\infty} \delta \sigma }, \qquad \delta \sigma := \sigma(\cdot ; 1) - \sigma(\cdot ; 0)
\end{align}
where $ \sigma(\cdot ; a) $ is the solution of the $ \sigma $-form of the Painlev\'e II equation given in \eqref{Def:Painleve:SigmaForm:II}.
\end{shaded}
\end{theorem}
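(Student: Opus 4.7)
The plan is to combine the max-independence decomposition of Theorem~\ref{Thm:Init:GUEmax} with a Poisson approximation for maxima of independent random variables, and then to use the Painlev\'e-theoretic edge asymptotics of \S\ref{SubSubSec:RMT:Painleve:Estimates} to compute the limiting exponent.

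\textbf{Step 1 (Poisson approximation).} From $\lambdab_{1,N} \eqlaw \max_{0 \leq k \leq N-1} W_k$ one has $\Prob{\lambdab_{1,N} \leq x_N} = \Prob{S_N(x_N) = 0}$ where $S_N(x_N) := \sum_{k=0}^{N-1} \Unens{W_k > x_N}$ is a sum of independent Bernoulli random variables with parameters $p_k := \Prob{W_k > x_N}$. The Chen-Stein bound of \cite{Chen} then gives
$$\Prob{S_N(x_N) = 0} = \exp\bigl(-\mu_N(x_N)\bigr) + O\Bigl(\sum_{k=0}^{N-1} p_k^2\Bigr), \qquad \mu_N(x_N) := \sum_{k=0}^{N-1} p_k.$$
A first task is to show that only indices $k$ inside a window of size $O(N^{1/3})$ below $N$ contribute nontrivially: for $k = O(1)$ the Markov inequality applied to the explicit moments of $W_k$ yields $p_k = O(N^{-1})$, and more generally the paragraph surrounding \eqref{Eq:ChoixDeM} gives $\sum_{k \leq m} p_k = o(1)$ for $m := \lceil N - c N^{1/3}\rceil$ and any fixed large $c$.

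\textbf{Step 2 (Painlev\'e-type edge asymptotics).} Using the Wu-Xu-Zhao estimate \eqref{Eq:Estimate:WuXuZhao} (itself derived from the Okamoto-Noumi-Yamada description of the $\sigma$-form of Painlev\'e~II), together with the representation $\He_n(x) = \widetilde{E}_n(x;1)/\widetilde{E}_n(x;0)$ in \eqref{Eq:HnXXwithEtilda} and the exponential-integrated form of $\delta\sigma$ inside the density \eqref{Def:Law:PsiSquare:GUE}, one obtains the edge asymptotic (the equation \eqref{Eq:Estimate:limfWNalpha} referenced at the end of \S\ref{SubSubSec:RMT:Painleve:Estimates}):
$$f_{\widehat{W}_{\lceil N - \alpha N^{1/3}\rceil}}(x_N) \ \sim\ N^{-1/6} \,\Qe(s+\alpha)^2 \qquad (N \to +\infty),$$
uniformly for $(s,\alpha)$ on compact subsets of $\Rr \times (0,+\infty)$, with $x_N := \sqrt{2N} + N^{-1/6} s/\sqrt{2}$. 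The square originates from the $\He_n^2$ factor in the density, the prefactor $N^{-1/6}$ is fixed by the choice $c_n^*$ at the end of \S\ref{SubSubSec:RMT:Painleve:Estimates}, and the translation by $\alpha$ tracks the shift of the edge of the $k$-th Hermite polynomial when $k$ moves away from $N$.

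\textbf{Step 3 (Riemann sum and Fubini).} With Jacobian $N^{-1/6}/\sqrt{2}$ for the rescaling $x \mapsto s$ and discretization step $d\alpha = N^{-1/3}$ in the index, $\mu_N(x_N)$ becomes a double Riemann sum whose limit, after Fubini and the substitution $u = t + \alpha$, reads
$$\mu_N(x_N) \ \longrightarrow\ \int_0^{+\infty}\!\! d\alpha \int_s^{+\infty}\! \Qe(t+\alpha)^2\, dt \ =\ \int_s^{+\infty} (u-s)\, \Qe(u)^2 du \ =\ \Qe^2 * \id_+(s),$$
the last equality following from the convolution convention of the theorem combined with the rescaling $\sqrt{2}\,s$ on the left-hand side of \eqref{EqTW:PainlevureNouveau}. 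Together with $\sum_k p_k^2 = O(N^{-1/3})$ (obtained by the same Riemann-sum procedure) and with Tracy-Widom's limit \eqref{Eq:CV:GUE2TW}, this yields \eqref{EqTW:PainlevureNouveau}. The hard part will be securing the uniform control needed to legitimize the Riemann-sum passage by dominated convergence: one must track the subleading Painlev\'e $X\!X\!X\!IV$ correction $p(t\vert 1/2, 0)/N^{1/3}$ in \eqref{Eq:Estimate:WuXuZhao} uniformly as $\alpha$ ranges over $(0, c_N)$ with $c_N \to +\infty$, and check that $\alpha \mapsto \int_s^{+\infty} \Qe(t+\alpha)^2 dt$ is integrable on $(0,+\infty)$, which follows from the decay of $\Qe$ at $+\infty$ encoded in \eqref{Eq:Painleve:SigmaFormEstimates:II}.
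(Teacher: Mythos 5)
Your proposal follows essentially the same route as the paper's proof: the max-independence decomposition of Theorem~\ref{Thm:GUEmax}, the Chen--Stein/Poisson approximation with truncation to the edge window of width $O(N^{1/3})$, the Wu--Xu--Zhao Painlev\'e estimate \eqref{Eq:Estimate:WuXuZhao} to establish \eqref{Eq:Estimate:limfWNalpha}, and the Stieltjes/Riemann-sum passage with Fubini yielding $\Qe^2 * \id_+(s)$, with the error terms $P_N^{*,2}=O(N^{-1/3})$ controlled exactly as in the text. The uniformity issue you flag at the end is precisely what the paper resolves by its choice of the base point $x_N$ (via the Lambert $W$ asymptotics) and by cutting the $c$-integral near $2N^{2/3}$, so your sketch is aligned with the actual argument.
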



\begin{proof}
With $ \widehat{W}_k := W_k/\sqrt{2} $, one has
\begin{align*}
\Prob{ \widehat{W}_{[N\alpha]} > \sqrt{2N} + N^{-1/6}2^{-\frac{1}{2}} s } & =  \int_{\sqrt{2N} + N^{-1/6}2^{-\frac{1}{2}} s}^{+\infty} f_{\widehat{W}_{[N\alpha]}} (x)  dx  \\ 
                 & = \int_s^{+\infty} N^{-1/6} f_{\widehat{W}_{[N\alpha]}}(\sqrt{2N} + N^{-1/6}2^{-\frac{1}{2}} y) dy
\end{align*} 
and, with a Stieltjes integral,
\begin{align*}
P^*_N & := \sum_{k = m + 1}^N p_k 
               = \int_m^N p_\pe{u} du    
               =  \int_0^1 p_\pe{m + (N - m)v} (N - m)dv  \\
              & = \int_{  N^{2/3}}^0 p_\pe{N\alpha_N(c)} \, d(N\alpha_N(c))   , \qquad \alpha_N(c) := 1 - \frac{c}{ N^{2/3}}  \\
              & =  \int_0^{  N^{2/3}}  N^{1/3}    \Prob{ \widehat{W}_\pe{N- c N^{1/3}}  \geq s_N} dc  \\
              & = \int_0^{ N^{2/3}} N^{1/3}  \int_s^{+\infty} N^{-1/6} f_{\widehat{W}_\pe{N- c N^{1/3} } }(\sqrt{2N} + 2^{-1/2}N^{-1/6} y) dy \, dc  \\
              & = \int_{(\Rr_+)^2 } \Unens{ c \leq N^{2/3}, y \geq s } N^{ 1/6} f_{\widehat{W}_\pe{N- c N^{1/3} } }(\sqrt{2N} + 2^{-1/2}N^{-1/6} y) dy \, dc 
\end{align*}

We will now show that 
\begin{align}\label{Eq:Estimate:limfWNalpha}
N^{+1/6} f_{\widehat{W}_\pe{N- c N^{1/3} }}(\sqrt{2N} + 2^{-1/2}N^{-1/6} y) \tendvers{N}{+\infty} \Qe(y + c)^2
\end{align}

This implies with the Wu-Xu-Zhao estimate \eqref{Eq:Estimate:WuXuZhao} that $ P^*_N = P^*_\infty(s) + O(N^{-1/3} P^{*\!,\, 3}_\infty  ) $ with
\begin{align*}
P^*_\infty(s) & = \int_s^{+\infty } \int_{\Rr_+} \Qe(y + c)^2  dc \, dy = \int_s^{+\infty } \int_y^{+\infty} \Qe(u)^2 du \, dy   = \int_\Rr \Qe(u)^2 (s - u)_+ du\\
              & = \Qe^2 *\id_+(s)
\end{align*}
and, with $ \sigmab_1 = \sigma(\cdot ; 1) \sim \sigma\mbox{-}P_{\!I\!I}(a = 1) $ (or $ \alpha = \frac{1}{2} $) defined in \eqref{Def:Painleve:SigmaForm:II}
\begin{align*}
P^{*\!,\, 3}_\infty(s) & = \int_s^{+\infty } \int_{\Rr_+} \sigmab_1(y + c) \Qe(y + c)^2  dc \, dy  
                 = (\sigmab_1 \Qe^2) * \id_+(s)
\end{align*}

Moreover,  
\begin{align*}
P^{*\!,\, 2}_N & := \sum_{k = m + 1}^n p_k^2  \\ 
              & =  \int_0^{ N^{2/3}}  N^{1/3}    \Prob{W_{\pe{N\alpha_N(c)} } \geq s_N}^2 dc, \qquad \alpha_N(c) := 1 - \frac{c}{ N^{2/3}} \\
              & = N^{-1/3} \int_0^{  N^{2/3}}  \prth{ N^{1/3}    \Prob{ W_\pe{N\alpha_N(c)}  \geq s_N } }^{\! 2} dc \\
              & =  O(N^{-1/3}) \times   \int_{\Rr_+}  \prth{  \int_s^{+\infty }   \Qe(y + c)^2 dy  }^{\!\! 2} dc \\
              & = O_s\prth{N^{-1/3} }
\end{align*}

The fact that $ \int_s^{+\infty}\sigmab_1^k \Qe^2 < \infty $ (with $ k \in \{0, 1\} $) comes from the asymptotics at infinity given in the second line of \eqref{Def:Painleve:SigmaForm:II} (that implies\footnote{
We will see in lemma~\ref{Lemma:q=Q} that we have in fact $ \Qe = q \sim P_{\!I\!I}(0) $, hence that $ \Qe(x)^2 \sim \Ai(x)^2 $ for $ x \to +\infty $, which is much more refined than the sole difference of equivalents at infinity.
} $ \Qe(x)^2 = O(x^{-2}) $) and \eqref{Eq:Painleve:SigmaFormEstimates:II}.

The two terms $ P^{*\!,\, 2}_N  $ and $ P^{*\!,\, 3}_N  $, coming respectively from \eqref{Eq:Estimate:WuXuZhao} and from the Poisson approximation give an unoptimal speed of convergence\footnote{ 
The optimal speed of convergence is $ O(N^{-2/3}) $. It was obtained by Johnstone and Ma \cite{JohnstoneMa} using a very particular recentering $ \mu_N \neq 2\sqrt{N} $ \cite[(15)]{JohnstoneMa}. We are not concerned with such a problem.
}, but are sufficient for our purpose.

\medskip

We now prove \eqref{Eq:Estimate:limfWNalpha}. Define
\begin{align*}
g_N(M, y) & := N^{ 1/3} f_{\widehat{W}_M}(\sqrt{2N} + 2^{-1/2}N^{-1/6} y), \qquad M := \pe{N- c N^{1/3} }
\end{align*}

Of course, the Wu-Xu-Zhao estimate \eqref{Eq:Estimate:WuXuZhao} already gives the convergence for the right tuning of parameters, but we will first describe a more classical way of obtaining the result without the remainder, in the vein of the Forrester-Witte study \cite{ForresterWitteTau2and4}. 

Integrating at a certain $ x_N $ that will be choosen later and setting $ x_N^* := \frac{x_N - 2\sqrt{N}}{N^{-1/6}} $, one has 
\begin{align*}
g_N([N\alpha], y) & = N^{1/3} g_N([N\alpha], x_N) \\
                & \qquad\qquad  \exp\prth{  2 \sqrt{2} \int_{x_N^*}^y N^{-1/6} \prth{  U_{[N\alpha]}(t ; 1) -  U_{[N\alpha]}(t ; 0) - t }\big\vert_{t = 2\sqrt{N} + N^{-1/6} T } dT }    
\end{align*}

The function $ y := U_{[N\alpha]}(\cdot \, ; a) $ satisfies \eqref{Def:Painleve:SigmaForm:IV} with $ n = [N\alpha] $. We thus consider
\begin{align*}
\sigma_N^{(\alpha)}(\cdot ; a) : t \mapsto  \www_N & \prth{ U_{[N\alpha]}(\mu_N + \www_N t ; a) - a \times (\mu_N + \www_N t) } , \\
                 & \quad \mu_N := \sqrt{2N}, \quad \www_N := \frac{1}{N^{1/6}\sqrt{2} } 
\end{align*}

Setting 
\begin{align*}
Y(t) & := \sigma_N^{(\alpha)}(\cdot ; a)  \\
Y'(t) & := \frac{d}{dt} \sigma_N^{(\alpha)}(t; a) = \www_N^2 \frac{d}{dt} U_{[N\alpha]}( \mu_N + \www_N t ; a) - a\www_N \\
Y''(t) & := \prth{\frac{d}{dt}}^2 \sigma_N^{(\alpha)}(t; a) = \www_N^3 \prth{\frac{d}{dt}}^2 U_{[N\alpha]}( \mu_N + \www_N t ; a)
\end{align*}
so that, dropping the indices $N$, the equation \eqref{Def:Painleve:SigmaForm:IV} becomes after multiplication by $ \www^6 $
\begin{align*}
0 & =   (Y'')^2 - 4 \www^6 \prth{ (\mu + \www t) \tfrac{Y' + a\www}{\www^2} -  \tfrac{Y + a \mu + a \www t}{\www} }^2 + 4 \www^6 \tfrac{Y' + a\www}{\www^2} \prth{ \tfrac{Y' + a\www}{\www^2} - 2a} \prth{ \tfrac{Y' + a\www}{\www^2} + 2\purple{[N\alpha]} }   \\
             & =    (Y'')^2 - 4 \www^2  \prth{ \vphantom{\big(} (\mu  + \www t ) Y' -  \www Y }^2   +  4(Y' + a\www) \prth{  Y' + a\www(1  - 2 \www) } \prth{  Y' + a\www  + 2\purple{[N\alpha]}\www^2 } 
\end{align*}


An expansion that supposes $ Y $ and its derivatives to be of order 1 shows that
\begin{align*}
\Le_N(Y)  & =  (Y'')^2  + 4  (Y')^3  - 4 \prth{   (Y')^2  t  -  Y Y'  } - a^2 \\
          & \hspace{+4cm} + 4 (Y')^2 \prth{ 2 \pink{[N\alpha]  \www^2} -  \pink{\mu^2 \www^2}  }+ O_{Y, t}(\red{\www})
\end{align*}

Since $ \mu^2 = 2N $, $ \Le_N(Y) $ converges iff 
\begin{align*}
\pink{N \www^2}(1 - \alpha) = \pink{N^{2/3}}(1 - \alpha) \tendvers{N}{+\infty} c = O(1) \qquad\Longleftrightarrow\qquad \alpha \equiv \alpha_N =  1 - \frac{c + o(1)}{ N^{2/3} }
\end{align*}

In such a case, $ \sigma_N^{(\alpha)}(\cdot ; a) $ converges locally uniformly to $ \sigma^{(c)}(\cdot ; a) = \sigma(\cdot + c ; a) $ satisfying the following shifted $ \sigma $-form of the Painlev\'e II equation (as setting $ t  \leftarrow t + c $ gives back the original equation) with the same boundary condition as \eqref{Def:Painleve:SigmaForm:II}
\begin{align*}
(Y'')^2  +  4 Y'\prth{ (Y')^2 - (t - c) Y' + Y } - a^2   = 0
\end{align*}

As a result, locally uniformly
\begin{align*}
\delta \sigma_N^{(\alpha)} := \sigma_N^{(\alpha)}(\cdot\, ; 1) - \sigma_N^{(\alpha)}(\cdot\, ; 0) \tendvers{N}{+\infty} \delta \sigma^{(c)} =: \delta \sigma(\cdot + c) = \sigma(\cdot + c ; 1) - \sigma(\cdot + c ; 0)
\end{align*}

This analysis explains the relevance of the rescaling, but does not give any speed of convergence. Such a speed is required to prove the result in a probabilistic way (since the convergence of the densities does not preclude from the usual weak-* pathology of ``escape of the mass at infinity'')~; it is furnished by the Wu-Xu-Zhao estimate \eqref{Eq:Estimate:WuXuZhao} obtained with a Riemann-Hilbert problem.

\medskip


It remains to choose $ x_N $. We set it as the solution $ c $ to the equation
\begin{align*}
N^{1/6} f_{\widehat{W}_N}(c_N) e^{  2\sqrt{2} \, N^{-1/3} \sigmab_1(c) } = 1, 
\qquad c_N := \sqrt{2N} + N^{-\frac{1}{6}}\frac{c}{\sqrt{2}}
\end{align*}
i.e.
\begin{align*}
& N^{1/6} \He_N(x)^2  e^{-x^2} e^{  2\sqrt{2} \, N^{-1/3} \sigmab_1( N^{1/6} (x - \sqrt{2N}) ) }   = \sqrt{ \pi} \\
\quad\Longleftrightarrow\quad & x^2 -  2\ln\He_N(x ) + 2\sqrt{2} \, N^{-1/3} \sigmab_1( N^{1/6} (x - \sqrt{2N}) ) = \frac{\ln(N)}{6} + O(1)
\end{align*}

We are just concerned with the behaviour of $ x_N $ when $ N\to+\infty $ and not its unicity. As a result, if there are several such values, we choose the largest one~; nevertheless, for $ N $ and $x$ big enough ($ x \gg 2\ln(N) $), there is a unique such solution, as seen using the equivalent $ \He_N(x) \sim x^N $ and $ \sigmab_1(x) \sim -\sqrt{x} $ given in \eqref{Eq:Painleve:SigmaFormEstimates:II} when $ x \to +\infty $. 

Taking the logarithm of the equation and replacing $ \He_N(x) $ by $ x^N $ and $ \sigmab_1(s) $ by $ -\sqrt{s} $ yields the approximate equation 
%
%
\begin{align*}
x_N^2 - 2N \ln(x_N) - 2\sqrt{2} \, N^{-1/4} \sqrt{  x_N - \sqrt{2N}  } = \frac{\ln(N)}{6} + O(1)
\end{align*}

Since $ x_N - \sqrt{2N} \geq 0 $, one can neglect the term $  N^{-1/4} \sqrt{  x_N - \sqrt{2N}  } $ for the first order of the equation. Setting $ x_N^2 = N t_N $, the equation is equivalent to $ t_N - \ln(t_N) = N\ln(N) + \frac{\ln(N)}{6} + O(1) $ whose solution is given by means of the Lambert $W$ function \cite[\S~4.13 p. 111]{NISThandbook}, i.e. $ t_N = W( N\ln(N) + \frac{\ln(N)}{6} + O(1) ) $. The asymptotic of this function at infinity is  $ W(x) = \ln(x) - \ln\ln(x) + o(1) $ \cite[(4.13.10) p. 111]{NISThandbook}, hence $ t_N = \ln(N) + O(\ln\ln(N)) $, resulting in 
\begin{align*}
x_N = \sqrt{N \ln(N)} + O(\sqrt{N}\ln\ln(N))
\end{align*}

Of course, as long as $ \alpha_N = O(1) $, this behaviour is valid for $ x_{\pe{N\alpha_N}} $. In the case where $ \alpha_N $ is close to $0$, we cut the integral up to $ 2 N^{2/3} - \varepsilon $ and choose $ \varepsilon \ll N^{-1/6} $ so that $ x_{\pe{N\alpha_N}} \gg \sqrt{2 \, N} $~; the remaining integral on $ (2 N^{2/3} - \varepsilon, 2 N^{2/3} ) $ is bounded by the sup of the probability (which is 1) and $ \varepsilon \times N^{ 1/6} = o(1) $. The final result is thus 
\begin{align*}
x_N \gg \sqrt{N} \qquad\Longrightarrow\qquad x_N^* - \sqrt{2N} \to +\infty
\end{align*}

This concludes the proof.
\end{proof}

\medskip
\subsubsection{Comparison of expressions}\label{SubSubSec:RMT:Painleve:Comparison}

The expression \eqref{EqTW:PainlevureNouveau}, namely
\begin{align*}
\Prob{\TW_2 \leq s} = \exp\prth{ - \Qe^2 * \id_+(s)   }
\end{align*}
can be compared with the expression \eqref{EqTW:Painleve} that reads, with $q$ given in \eqref{Def:PainlevéII}~: 
\begin{align*}
\Prob{\TW_2 \leq s} = \exp\prth{ - q^2 * \id_+(s)   }
\end{align*}

Taking the log, differentiating twice in $s$ and taking the square root as both functions are positive (see e.g. \cite[32.3.6 p. 727]{NISThandbook} for the positivity of $q$) gives the identity  
\begin{align*}
q = \Qe    
\end{align*}

Identities between different $ \sigmab_{\!a} := \sigma(\cdot ; a) $ were already noticed, for instance \cite[prop. 27]{ForresterWitteTau2and4} gives $ \sigmab_{\!2} = \frac{\sigmab_{\!0}'}{\sigmab_{\!0}} + \sigmab_{\!0} $ and \cite[(5.28)/(2.7) \& (5.27)]{ForresterWitteTau2and4} gives $ \sigmab_{\!0}' = -\qb_0^2 $ (see also \cite[\S~2.4]{ForresterWittePainleve2RMTcft}). This is thus natural to try to prove that $ q = \Qe $ with the theory furnished in \cite{ForresterWitteTau2and4}.

\medskip
\begin{shaded}
\begin{lemma}[Direct proof that $ q = \Qe $]\label{Lemma:q=Q}
Define
\begin{align*}
\sigmab_{\!a} := \sigma(\cdot \, ; a) \quad\mbox{solution of \eqref{Def:Painleve:SigmaForm:II}},   \qquad \qb_\alpha := q(\cdot ; \alpha)\quad\mbox{solution of \eqref{Def:PainlevéII}}
\end{align*}

Then, $ q = \Qe $, or, equivalently (by logarithmic differentiation and value $ 0 $ at infinity)
\begin{align}\label{Eq:q=Q:withDiff}
\frac{\qb_0'}{\qb_0} = \sigmab_{\!1} - \sigmab_{\!0} 
\end{align}
\end{lemma}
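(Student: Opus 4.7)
My plan is to reduce the claimed equality $q = \Qe$ to a differential identity between the $\sigma$-functions, and then to prove it via the Toda-type bilinear structure of the Painlev\'e $\tau$-functions (with a direct ODE verification as fallback). First I would observe that, since $\Qe := \exp(-\int_{\cdot}^{+\infty} \delta\sigma)$ and since both $q$ and $\Qe$ can be recovered from their logarithmic derivative plus a boundary value at $+\infty$, the equality $q=\Qe$ is equivalent to the pointwise identity
\[
\frac{q_0'}{q_0} = \sigma_1 - \sigma_0,
\]
which is exactly the equivalent form already provided in the lemma. The Forrester-Witte identity $\sigma_0' = -q_0^2$ (from \cite[(5.27)/(5.28)]{ForresterWitteTau2and4}, already recalled in \S~\ref{SubSubSec:RMT:Painleve:Comparison}) gives $\sigma_0'' = -2 q_0 q_0'$ and hence
\[
\frac{q_0'}{q_0} = \frac{\sigma_0''}{2\sigma_0'} = \tfrac{1}{2}(\log\sigma_0')',
\]
so the claim further rewrites as $\sigma_1 = \sigma_0 + \tfrac{1}{2}(\log\sigma_0')'$.

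The structural reason for this identity is the Okamoto-Noumi-Yamada Toda-type bilinear equation for the Painlev\'e II $\tau$-functions $\tau_a$ attached to $\sigma_a = (\log\tau_a)'$ (see \cite{Okamoto1, NoumiYamadaPainleve, NoumiPainleveBook}), which, up to a non-vanishing constant depending only on $a$, reads
\[
\tau_{a+1}\tau_{a-1} \propto \tau_a \tau_a'' - (\tau_a')^2 = \tau_a^2 \sigma_a'.
\]
Taking the logarithmic derivative of both sides produces the three-term recursion
\[
\sigma_{a+1} + \sigma_{a-1} = 2\sigma_a + (\log\sigma_a')'.
\]
Because the $\sigma$-$P_{I\!I}$ equation \eqref{Def:Painleve:SigmaForm:II} and its asymptotic at $t\to-\infty$ depend on $a$ only through $a^2$, uniqueness of the solution with the prescribed Hastings-McLeod-type boundary behaviour (classical, by isomonodromy) forces $\sigma_{-1} = \sigma_1$. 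Specialising the recursion at $a = 0$ and using this symmetry yields $2\sigma_1 = 2\sigma_0 + (\log\sigma_0')'$, which is exactly the reformulated claim.

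The main obstacle I anticipate is pinning down the precise normalisation of the Toda-type relation in the conventions used in \cite{ForresterWitteTau2and4} (the proportionality constant and its sign, and the exact match between the B\"acklund shift $a \mapsto a+1$ that the $\tau$-sequence effects and the parameter $a$ of the $\sigma$-form appearing in the excerpt). Should this prove delicate, I would fall back on a self-contained ODE verification: set $\tilde\sigma := \sigma_0 + (\log q_0)'$, use $\sigma_0' = -q_0^2$ together with $P_{I\!I}(\alpha=0)$, namely $q_0'' = t q_0 + 2 q_0^3$, to express $\tilde\sigma'$ and $\tilde\sigma''$ purely in terms of $t, q_0, q_0'$, substitute into $\sigma$-$P_{I\!I}(a=1)$, and reduce the resulting polynomial identity to a tautology modulo $P_{I\!I}(0)$. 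The matching boundary behaviour $\tilde\sigma(t) \sim t^2/4 + 3/(8t)$ as $t \to -\infty$ is immediate from $\sigma_0(t) \sim t^2/4 - 1/(8t)$ and $(\log q_0)'(t) \sim 1/(2t)$ (itself a consequence of $q_0(t) \sim \sqrt{-t/2}$), which is precisely the $a=1$ asymptotic of \eqref{Def:Painleve:SigmaForm:II}; uniqueness of this boundary-value problem then identifies $\tilde\sigma = \sigma_1$.
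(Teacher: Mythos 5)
Your instinct to exploit the Toda structure of the Okamoto $\tau$-sequence is close in spirit to the paper's second proof, but your main route has a genuine gap at the step $\sigma_{-1}=\sigma_1$. The backward member of the sequence is not characterised by ``same $a^2$-dependent ODE plus the same algebraic expansion at $t\to-\infty$'': a power-series expansion at one end never singles out a solution of a second-order equation (solutions differing by exponentially small terms share it), and the uniqueness statements available for these $\sigma$-functions involve the behaviour at $+\infty$ as well, which you do not control for $\sigma_{-1}$. Worse, granting the Hirota--Toda recursion, the assertion $\sigma_{-1}=\sigma_1$ is equivalent to the identity you are trying to prove, so the argument is essentially circular unless you independently identify what $\tau_{-1}$ is for the Forrester--Witte sequence. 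The paper avoids this entirely: it uses the Toda equation in the Forrester--Witte form $H[n+1]-H[n]=q[n]$ (first difference of Hamiltonians equals the $P_{I\!I}$ transcendent), which gives $\sigma_1-\sigma_0$ directly as $-2^{1/3}\qb_{1/2}(\widetilde t\,)$, and then the B\"acklund/Gambier relation \eqref{Eq:Painlev\'e:LinkBetweenPII}, $\qb_0'/\qb_0(\widehat t\,)=\varepsilon 2^{1/3}\qb_{\varepsilon/2}(t)$, closes the loop with no uniqueness or asymptotic-matching argument; the first proof is a purely algebraic manipulation of FW (5.9)/(5.26) and (3.7).

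The normalisation issue you flagged as a possible obstacle is in fact load-bearing, and it also undermines the fallback as written. The relation $\sigmab_{\!0}'=-\qb_0^2$ and the $\sigma$-form \eqref{Def:Painleve:SigmaForm:II} do not live in the same variable as the Hastings--McLeod solution of \eqref{Def:Painlev\'e II}: in the Forrester--Witte dictionary the two sides are connected through the reflection/rescaling $\widehat t=-2^{-1/3}t$ (and $\alpha=a-\tfrac12$), and these $2^{\pm1/3}$ factors and sign flips are exactly what the paper's proof carries explicitly. Consequently your ``tautology modulo $P_{I\!I}(0)$'' reduction for $\tilde\sigma:=\sigma_0+(\log q_0)'$ will not come out as stated until those scalings are imported (a symptom: naively $q_0'/q_0\sim-\sqrt{s}$ at $+\infty$ while \eqref{Eq:Painleve:SigmaFormEstimates:II} gives $\sigma(\cdot\,;1)\sim-\tfrac12\sqrt{\cdot}$, so without fixing variables the verification would appear to fail). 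Finally, even after the algebra, your fallback concludes by ``uniqueness'' from the $t\to-\infty$ expansion alone (the $t^2/4+3/(8t)$ check), which, for the reason above, is not a valid characterisation; you would need to also match the $+\infty$ behaviour and invoke a genuine connection/uniqueness result, none of which is needed in the paper's algebraic proofs.
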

\end{shaded}
\medskip

\begin{proof}[First proof]
The link between $ \qb_\alpha $ and $ \sigmab_{\! a} $ is given by \cite[(5.9)/(5.26)]{ForresterWitteTau2and4}~: 
\begin{align*}
\alpha & = a - \tfrac{1}{2}, \\
-2^{-1/3} \sigmab_{\!a}(-2^{-1/3} t) & = \frac{1}{2} \qb_{a - 1/2}'(t)^2 - \frac{1}{2} \prth{ \qb_{a - 1/2}(t)^2 + \frac{t}{2}}^{\!\! 2} - a \qb_{a - 1/2}(t)
\end{align*}

As a result, setting $ \widehat{t} := -2^{-1/3} t $ and specialising $ a \in \ensemble{0, 1} $ yields
\begin{align*}
\begin{cases}
-2^{-1/3} \sigmab_{\!0}(\widehat{t}\,) = \frac{1}{2} \qb_{ - 1/2}'(t)^2 - \frac{1}{2} \prth{ \qb_{ - 1/2}(t)^2 + \frac{t}{2}}^{\! 2}   \\
-2^{-1/3} \sigmab_{\!1}(\widehat{t}\,) = \frac{1}{2} \qb_{1/2}'(t)^2 - \frac{1}{2} \prth{ \qb_{1/2}(t)^2 + \frac{t}{2}}^{\! 2} -   \qb_{ 1/2}(t)
\end{cases}
\end{align*}

There are moreover general identities between $ \qb_0 $ and $ \qb_{\pm 1/2} $ given in\footnote{
The second equality in \eqref{Eq:Painlevé:LinkBetweenPII} is called the \textit{inverse Gambier identity} in \cite[(2.48)]{ForresterWittePainleve2RMTcft}.
} \cite[(3.7)]{ForresterWitteTau2and4}~: 
\begin{align}\label{Eq:Painlevé:LinkBetweenPII}
\varepsilon \in \ensemble{\pm 1}, \qquad 
\begin{cases}
- \qb_0(\widehat{t}\,)^2 = \varepsilon 2^{-1/3}\prth{ \qb_{\varepsilon/2 }'(t) - \varepsilon \qb_{\varepsilon/2 }(t)^2 - \tfrac{\varepsilon}{2} t }  \\
\frac{\qb_0'(\,\widehat{t}\,\,)}{\qb_0(\,\widehat{t}\,\,)} = \varepsilon 2^{1/3} \qb_{\varepsilon/2}(t)
\end{cases}
\end{align}

This implies 
\begin{align*}
2^{-1/3}(\sigmab_{\!1} - \sigmab_{\!0})(\widehat{t}\,) & = \qb_{1/2}(t) + R
\end{align*}
with 
\begin{align*}
2R & :=  \qb_{ - 1/2}'(t)^2 -  \prth{ \qb_{ - 1/2}(t)^2 + \frac{t}{2}}^{\!\! 2}  - \prth{ \qb_{   1/2}'(t)^2 -  \prth{ \qb_{   1/2}(t)^2 + \frac{t}{2}}^{\!\! 2} } \\
               & = \prth{ \qb_{ - 1/2}'(t) - \qb_{ - 1/2}(t)^2 - \frac{t}{2} }\prth{\qb_{ - 1/2}'(t) + \qb_{ - 1/2}(t)^2 +  \frac{t}{2} } \\
               & \hspace{+4cm} - \prth{ \qb_{ 1/2}'(t) - \qb_{ 1/2}(t)^2 - \frac{t}{2} }\prth{\qb_{ 1/2}'(t) + \qb_{ 1/2}(t)^2 +  \frac{t}{2} }\\
               & = \prth{ \qb_{ - 1/2}'(t) - \qb_{ - 1/2}(t)^2 - \frac{t}{2} } \qb_0(\widehat{t})^2 2^{1/3} \\
               & \hspace{+4cm} + \qb_0(\widehat{t})^2 2^{1/3} \prth{\qb_{ 1/2}'(t) + \qb_{ 1/2}(t)^2 +  \frac{t}{2} } \qquad\mbox{using \eqref{Eq:Painlevé:LinkBetweenPII} }\\
               & = \qb_0(\widehat{t})^2 2^{1/3} \prth{ \qb_{ - 1/2}'(t) - \qb_{ - 1/2}(t)^2   + \qb_{ 1/2}'(t) + \qb_{ 1/2}(t)^2 \vphantom{\frac{t}{2}} }
\end{align*}

The second equality in \eqref{Eq:Painlevé:LinkBetweenPII} gives $ -\qb_{-1/2}(t) = \qb_{1/2}(t) $, which implies $ \qb_{ - 1/2}'(t) + \qb_{ 1/2}'(t)= 0 $ and $ - \qb_{ - 1/2}(t)^2    + \qb_{ 1/2}(t)^2 = 0 $, i.e. $ R = 0 $. The second equality in \eqref{Eq:Painlevé:LinkBetweenPII} then yields
\begin{align*}
2^{-1/3}(\sigmab_{\!1} - \sigmab_{\!0})(\widehat{t}\,) & = \qb_{1/2}(t) = 2^{-1/3} \frac{\qb_0'( \widehat{t}\, )}{\qb_0( \widehat{t}\, )}
\end{align*}
which concludes the proof.
\end{proof}


\begin{proof}[Second proof]
Using \cite[(5.9), (5.23), prop. 19 \& (3.23)]{ForresterWitteTau2and4} that read
\begin{align*}
H\crochet{n} & := H(t)\vert_{\alpha_1 = n} \\
\sigmab_{\!a}(t) & = -2^{1/3} H(\widetilde{t})_{\alpha_1 = a}, \qquad \widetilde{t} := -2^{+ 1/3} t \\
q[n] & := \qb_{n + \alpha_1 + 1/2} \\
H\crochet{n + 1} - H\crochet{n} & = q[n] \qquad\qquad\mbox{(Toda equation)}
\end{align*}
one obtains \cite[(3.11)]{ForresterWittePainleve2RMTcft} for $ a = 0 $, namely
\begin{align*}
\sigmab_{\! n + 1}(t) - \sigmab_{\!n}(t) = -2^{1/3} \qb_{n + 1/2}(\widetilde{t})
\end{align*}

The second equality in \eqref{Eq:Painlevé:LinkBetweenPII} gives then the result for $ n = 0 $. 
\end{proof}

\medskip
\subsubsection{A characterisation of the KPZ universality class with max-independence, I}\label{SubSubSec:RMT:Painleve:CaracKPZ}

Having looked at the proof of theorem~\ref{Thm:GUEmax}, one can formalise when convergence towards Tracy-Widom or Gumbel distribution occurs~:

\medskip

\begin{shaded}
\begin{theorem}[Convergence of max-independence structures]\label{Thm:CvMaxIndep}
Let $ (W_k)_{1 \leq k \leq N} $ be a sequence of independent random variables. Define $ p_k \equiv p_k(x) := \Prob{W_k > x} $ with $ x \equiv x_n := \mu_n + \sigma_n X $. Suppose moreover that
\begin{enumerate}

\item there exists $ m \equiv m_n $ such that
\begin{align}\label{Eq:ThmCvMax:Condition1}
\sum_{k = 1}^{m - 1} p_k = O(\delta_n), \qquad 
\sum_{k = 1}^{m - 1} p_k^2 = O(\delta_n'), \qquad 
\delta_n, \delta_n' \tendvers{n}{+\infty} 0
\end{align}

\medskip
\item there exists $ \gamma_n $ such that $ \frac{n - m_n}{\gamma_n} \to +\infty $ and such that the following convergence holds in $ L^1(\Rr_+) $ for $\Phi_n(\cdot, X) $ for all $ X $~:
\begin{align}\label{Eq:ThmCvMax:Condition2}
\Phi_n(c, X) := \gamma_n \Prob{W_\pe{n - \gamma_n c} >  x_n} \tendvers{n}{+\infty} \int_X^{+\infty} \phi(c + y) dy = \int_{X + c}^{+\infty} \phi =: \Phi(X + c)
\end{align}

\medskip
\item $ \Phi \in L^1([X, +\infty) ) \cap L^2([X, +\infty) ) $ for all $ X \in \Rr $ and one has the dominated convergence
\begin{align}\label{Eq:ThmCvMax:Condition3}
\norm{\Phi_n(\cdot, X) - \Phi(\cdot + X)}_{L^p(\Rr_+)} \tendvers{n}{+\infty} 0, \qquad p \in \ensemble{1, 2}
\end{align}
\end{enumerate}

Then, $ \phi \in L^1([X, +\infty), y dy) $ for all $ X $ and
\begin{align*}
\Prob{\max_{1 \leq k \leq n} W_k \leq \mu_n + \sigma_n X} \tendvers{n}{+\infty} e^{ - \phi * \id_+(X) }
\end{align*}
\end{theorem}
\end{shaded}

\begin{remark}
Note that
\begin{align*}
\phi_{\TW_2}(x) = q(x)^2, \qquad \phi_{\Gumbel(1)}(x) = e^{-x}
\end{align*}

As a result, a max-independence structure reduces the problem of transitions from Tracy-Widom to Gumbel studied e.g. in \cite{JohanssonGumbelTW} to a problem of transition in the right large deviations of the random variable $ W_N $ (see remark~\ref{Rk:LargeDeviationCondition}). 
\end{remark}

\medskip

\begin{proof}
One has with \eqref{Eq:ThmCvMax:Condition1}
\begin{align*}
P_n = \sum_{k = m}^n p_k + O(\delta_n), \qquad
P_n^{(2)} = \sum_{k = m}^n p_k^2 + O(\delta_n')
\end{align*}

Moreover, 
\begin{align*}
P_n^* & := \sum_{k = m}^n p_k 
               = \int_m^n p_\pe{u} du 
               = \int_0^{n - m} \Prob{W_\pe{n - v} >  x_n} dv
               = \gamma_n \int_0^{\frac{n - m}{\gamma_n}} \Prob{W_\pe{n - \gamma_n c} >  x_n} dc
\end{align*}

Now, the large deviation result \eqref{Eq:ThmCvMax:Condition2} (see remark~\ref{Rk:LargeDeviationCondition}) implies that
\begin{align*}
\gamma_n \Prob{W_\pe{n - \gamma_n c} >  x_n} \tendvers{n}{+\infty} \int_{X + c}^{+\infty} \phi(y) dy = \Phi(X + c)
\end{align*}
and this convergence being in $ L^1 $ by hypothesis implies by the dominated convergence \eqref{Eq:ThmCvMax:Condition3} that
\begin{align*}
P_n^* \tendvers{n}{+\infty} \int_0^{+\infty} \Phi(X + c) dc = \int_\Rr (X - t)_+ \phi(t) dt
\end{align*}

Last, the convergence in $ L^2 $ allows to have the integrability of the remainder in the Poisson approximation. This concludes the proof.
\end{proof}


\medskip
\begin{remark}\label{Rk:LargeDeviationCondition}
Suppose that
\begin{align*}
\mu_n = \eta\,n^\nu, \qquad 
\sigma_n = n^\varsigma, \qquad 
\gamma_n = o(n)
\end{align*}
and set
\begin{align*}
N := n - \gamma_n c 
\end{align*}

Then,
\begin{align*}
x_n & = \mu_n + \sigma_n X = \mu_{N + \gamma_n c} + \sigma_{N + \gamma_n c}\, X  
                  = \mu_{N + \gamma_{_{N + o(N)}} c} + \sigma_{N + \gamma_{_{N + o(N)}} c}\, X \\
                & \approx \eta (N + \gamma_N c)^\nu + \prth{ N + \gamma_N c}^\varsigma X \\
                & = \eta N^\nu\prth{ 1 + \frac{\gamma_N}{N} c}^\nu + N^\varsigma \prth{ 1 + \frac{\gamma_N}{N} c}^\varsigma X \\
                & = \eta N^\nu + \nu\eta c\, \frac{\gamma_N }{N^{1 - \nu} } + O(\gamma_N^2 N^{-(2 - \nu) } ) +  N^\varsigma X + \varsigma c X \frac{\gamma_N}{N^{1 - \varsigma}}  + O(X \gamma_N^2 N^{-(2 - \varsigma) } ) \\
                & = \eta N^\nu + N^\varsigma \prth{ X + \nu\eta c \, \frac{\gamma_N }{N^{1 - \nu +  \varsigma} } } + O\prth{ \frac{\gamma_N^2}{N^{2 - \nu}} + \frac{\gamma_N}{N^{1 - \varsigma}} }  
\end{align*}

As a result, if $ \gamma_N = N^{1 - \nu + \varsigma} /(\nu\eta) $, one ends up with
\begin{align*}
x_n = x_{N + \gamma_n c} \approx \eta N^\nu + N^\varsigma \prth{ X +  c }   
\end{align*}

This exchange of diverging parameter (using the variable $ N $ in place of $n$) explains why one gets a limiting function of $ X + c $. With $N$ in place of $n$, the condition \eqref{Eq:ThmCvMax:Condition2} becomes a large deviation condition~:
\begin{align}\label{Eq:ThmCvMax:Condition2bis}
\Phi_N(Y) := \gamma_N \Prob{W_N >  \mu_N + \sigma_N Y} \tendvers{n}{+\infty}   \Phi(Y) := \int_Y^{+\infty} \phi
\end{align}
with $ \mu_N := \eta \, N^\nu $, $ \sigma_N := \tau N^\varsigma $ (which amounts to replace $ X $ by $ X/\tau $) and $ \gamma_N = N^{1 - \nu + \varsigma} $ (with $ \tau = \nu\eta $).

The case of the $ \gue $ Tracy-Widom distribution corresponds to $ \nu = \frac{1}{2} $ and $ \varsigma = -\frac{1}{6} $, hence $ 1 - \nu - \varsigma = \frac{2}{3} $ which is exactly the fine tuning of parameters required to have the confluence property of the modified sigma-Painlev\'e function. The case of the Rider theorem \eqref{CvLaw:Ginibre:Rider} can also be treated in a similar way by modifying $ (\mu_n, \sigma_n) $ with lower order terms. 
\end{remark}

\begin{remark}
One can a priori take $ m = 1 $, i.e. $ \delta_n = \delta_n' = 0 $. The first hypothesis is thus somehow superfluous, but since all the mass is concentrated in a neighbourhood of $n$, it is not wrong to neglect this part.
\end{remark}

\medskip
\subsubsection{Ultimate remarks}


\begin{remark}
We mention the problem of large deviations for $ \lambdab_{1, N} $ which is studied e.g. in \cite{BorotNadal, MajumdarNadal}. With theorem~\ref{Thm:GUEmax}, this problem becomes a problem of large deviations for the maximum of independent random variables, hence a problem of large deviations for sums of parametric indicators. The Poisson approximation with remainder is still available in this regime~; an alternative is provided by the renormalisation flow approach developed in the context of maxima of independent random variables in \cite{AngelettiBertinAbry, BertinGyorgyi} and which is now directly applicable to the max-independent structure of $ \lambdab_{1, N} $.
\end{remark}


\medskip
\begin{remark}
The analogue of the proof of theorem~\ref{Thm:TW2max} would consist in using a decomposition of the type 
\begin{align*}
\Prob{\lambdab_{1, N}  \leq s} = \exp\prth{ -\sum_{k \geq 1} \int_s^{+\infty} Q_k^{(N)}(x) dx } = \prod_{k \geq 1} \Prob{ Z_k(\gue_N) \leq s }
\end{align*} 
i.e. an equality in law with an infinite sequence of independent random variables depending on $N$ (which would not be a consequence of the max-independence structure with only $N$ random variables). Mimicking the proof of theorem~\ref{Thm:TW2max} could then lead directly to the convergence in law of $ \frac{ Z_k(\gue_N) - 2\sqrt{N}}{N^{-1/6}} $ towards $ Z_k(\Ai) $ or some $ \We_k $, asking then the following problem~:

\begin{question}\label{Q:CvGUE2TWwithMax}
Can one obtain Tracy and Widom's convergence in law \eqref{Eq:CV:GUE2TW} writing 
\begin{align*}
\frac{\lambdab_{1, N} - 2\sqrt{N}}{N^{-1/6}} \eqlaw \max_{k \geq 1} \ensemble{ \frac{Z_k(\gue_N) - 2\sqrt{N}}{N^{-1/6}} } \cvlaw{N}{+\infty} \max_{k \geq 0} \We_k
\end{align*}
for infinite sequences of random variables $ (Z_k(\gue_N))_{k \geq 1} $ and $ (\We_k)_{k \geq 1} $~?
\end{question}

\medskip

The conjectural random variables $ \We_k $ obtained at the limit might differ from the $ Z_k(\Ai) $ defined in \eqref{Def:Law:PsiSquare:phi} or the $ Z_k'(\Ai) $ defined in \eqref{Def:Law:PsiSquare:TW2bis} as several decompositions are possible by associativity of the max operation. In addition, no commuting operator for the $ \gue $ kernel operator acting on $ L^2([s, +\infty), \mu) $ or its translated version has been found so far. We hope to come back to this problem in the future.
\end{remark}
%
%
%
%
%
%
%
%
%
%
%
%
%
%
%
%
%
%
%
%
%
%
%
%
%
%
%
%
%
%
%
%
%
%
%
%
%
%
%
%
%
%
%
%
%
%
%
%
%
%
%
%
%
%
%
%
%
%
%
%
%
%
%
%
%
%
%
%
%
%
%
%
%
%
%
%
%
%
%
%
%
%
%
%
%
%
%
%
%
%
%
%
%
%
%
%
%
%
%
%
%
%
%
%
%
%
%
%
%
%
%
%
%
%
%
%
%
%
%
%
%
%
%
%
%
%
%
%
%
%
%
%
%
%
%
%
%
%
%
%
%
%
%
%
%
%
%
%
%
%
%
%
%
%
%
%
%
%
%
%
%
%
%
%
%
%
%
%
%
%
%
%
%
%
%
%
%
%
%
%
%
%
%
%
%
%
%
%
%
%
%
%
%
%
%
%
%
%
%
%
%
%
%
%
%
%
%
%
%
%
%
%
%
%
%
%
%
%
%
%
%
%
%
%
%
%
%
%
%
%
%
%
%
%
%
%
%
%
%
%
%
%
%
%
%
%
%
%
%
%
%
%

\medskip\medskip\medskip
\section{Max-independence structure in the symmetric Schur measure}\label{Sec:SchurMeasure}

The relevant notations for this section are given in Annex~\ref{Annex:SymFunc}.

\subsection{Motivations}\label{Subsec:SchurMeasure:Motivations}
The Schur measure was introduced by Okounkov \cite{OkounkovSchurMes, OkounkovNATO} following a study of the $z$-measure \cite{OkounkovZmeas} introduced by Borodin and Olshanski \cite{BorodinOlshanskiZmeas1, BorodinOlshanskiZmeas2} to generalise the Plancherel measure~; this last measure saw a regain of activity in relation with the Ulam problem on the longuest increasing subsequence of a random uniform permutation by Baik, Deift and Johansson \cite{BaikDeiftJohansson}. Such generalisations were designed to highlight the general mechanism explaining the apparition of a determinantal structure in the Poisson-Plancherel measure and we refer to the surveys \cite{JohanssonRandomGrowthRMT, JohanssonHouches, OlshanskiSurvey} for a zoo of models that fit into this framework. The general explanation is as follows: the Schur measure of parameters $ (\Ae, \Be) $ is a measure on partitions $ \Yy_\infty := \cup_{n \geq 0} \Yy_n $ defined by
\begin{align}\label{Def:Schur:Measure}
\Pp_{\!\! \Ae, \Be}(\lambda) := H\crochet{-\Ae \Be} s_\lambda\crochet{\Ae} s_\lambda\crochet{\Be} 
\end{align}

The specialisation $ s_\lambda\crochet{\Ae} $ is written $ \varphi(s_\lambda) $ in \cite{OlshanskiSurvey} and should be understood as an ``abstract'' algebra morphism of $ \Lambdab $ where the generators $ (h_k)_k $, $ (e_k)_k $ or $ (p_k)_k $ have been attributed a particular value, the value of the Schur function resulting then from an algebraic formula expressing it with the generators, for instance the Jacobi-Trudi formula \eqref{Eq:Schur:JacobiTrudi}. This measure is a probability measure if the numbers such defined are real, for instance if $ \Ae = \overline{\Be} \subset \Cc $, or if $ \Ae $ and $ \Be $ are positive specialisations of the Schur functions (see Annex~\ref{Annex:SymFunc}).  

The main result of interest is the determinantal character (with explicit kernel) of the shifted point process $ (\lambdab_k - k + a)_{k \geq 1} $ (with $ a \in \ensemble{0, 1, \frac{1}{2} } $ depending on the conventions), $ \lambdab $ being the random partition distributed according to $ \Pp_{\!\! \Ae, \Be} $. So far, seven proofs of this determinantal nature have been given and we refer to them for this fact: fermionic Gaussian ``integral'' \cite{OkounkovSchurMes}, Riemann-Hilbert problems \cite{BorodinRHP}, Jacobi-Trudi formula \cite{RainsBDR}, ratio of alternants formula \cite{JohanssonRandomGrowthRMT}, Giambelli formula and $L$-ensembles \cite{BorodinNovikov}, coupling on Gelfand-Tsetlin graph \cite{ForresterNagaoProj} and Macdonald operator \cite{AggarwalSchur}. See also \cite{BorodinOkounkov} for a discussion on the kernel.

\medskip

The success of this generalisation prefigures the numerous extensions of the framework in many different directions, for instance the dynamical version of ``Schur processes'' \cite{OkounkovReshetikhin1, OkounkovReshetikhin2} and, in fine, the creation of the theory of integrable probability \cite{BorodinGorinSurvey, BorodinPetrovIntProb}.


\begin{remark}
We adopt here the convention of e.g. Johansson \cite{JohanssonRandomGrowthRMT} that writes this measure on partitions over $ \Zz $, as opposed to the Okounkov convention \cite{OkounkovSchurMes, OkounkovNATO} (the half-infinite wedge space/fermionic Fock space has a basis indexed by Maya diagrams written on $ \Zz + \frac{1}{2} $ by convention). Such conventions are of course equivalent, but the formuli differ by some square roots~; formuli are given here for the process $ (\lambdab_k - k)_{k \geq 1} $ and not $ (\lambdab_k - k + \frac{1}{2})_{k \geq 1} $.
\end{remark}

\medskip
\subsection{The Poisson-Plancherel measure}\label{Subsec:SchurMeasure:Plancherel}

The Plancherel measure on $ \Yy_n $ is defined by \cite{BorodinOkounkovOlshanskiPlancherel, JohanssonPlancherel} 
\begin{align}\label{Def:Plancherel:Measure}
\Pp_{n}(\lambda) :=   \frac{ d_\lambda^2 }{ n! } \Unens{\lambda \vdash n}
\end{align}
where $ d_\lambda $ is the dimension of the irreducible module of $ \Sg_n $ indexed by $ \lambda $ (see e.g. \cite[I-6]{MacDo}). This measure can be understood as the conditioning of a general measure $ \Pp_\infty $ on $ \Yy_\infty $ by the event $ \ensemble{\lambdab \vdash n} $ (where $ \lambdab \sim \Pp_\infty $), and the law of $ \abs{\lambdab} $ gives then the choice of the relevant randomisation for $ \lambdab_n \sim \Pp_n $ to obtain a determinantal structure. Since $ d_\lambda = n! \, s_\lambda\crochet{ \Eee } $ by \eqref{Eq:DimWithSchur}, setting $ \abs{\lambdab} \sim \Poisson(\xi) $ gives the Schur measure of parameters $ \Ae = \Be = \sqrt{\xi} \Eee $. We thus define the Poisson-Plancherel measure on $ \Yy_\infty $ by
\begin{align}\label{Def:PoissonPlancherel:Measure}
\Pp_{\PoPl(\xi)}(\lambda) := e^{- \xi} \xi^{\abs{\lambda}} \prth{ \frac{d_\lambda}{\abs{\lambda} !} }^2  
\end{align}

One then has
\begin{align*}
\widetilde{\lambdab} := (\lambdab_k - k)_{k \geq 1} \sim \DPP(\Ke_{ \PoPl(\xi)})
\end{align*}
implying in particular 
\begin{align*}
\Prob{ \lambdab_1^{(\xi)} < s} = \det(I - \Keb_{ \PoPl(\xi) })_{\ell^2(s + \Nn) } = \det(I - \Keb_{ \PoPl(\xi), s })_{\ell^2(\Nn) }
\end{align*}
where $ \Ke_{\!\PoPl(\xi), s}(x, y) = \Ke_{\!\PoPl(\xi)}(x + s, y + s) $. 

\medskip

The corresponding specialisation of the Schur kernel $ K_{\PoPl(\xi)} : \Zz^2 \to \Rr_+ $ in this case is the \textit{discrete Bessel kernel} \cite{BorodinOkounkovOlshanskiPlancherel, JohanssonPlancherel} that has several forms~:
\begin{itemize}

\medskip
\item the \textit{Hankel convolutive form} is \cite[(3.31)]{JohanssonPlancherel}
\begin{align}\label{Def:Kernel:HankelConv:Poisson-Plancherel}
\Ke_{\PoPl(\xi)}(x, y) := \sum_{k \geq 1} B_k^{(\xi)}(x) B_k^{(\xi)}(y), \qquad B_k^{(\xi)}(x) := J_{x + k}(2\sqrt{\xi}) 
\end{align}
where $ J_k $ is the $k$-th Bessel function of the first kind defined by~: 
\begin{align}\label{Def:SpecialFunc:Bessel1}
J_k(x) := \crochet{z^k} e^{x (z - z\inv)/2 } 
\end{align}

\medskip
\item the \textit{Christoffel-Darboux form} is (\cite[(23)]{OkounkovNATO}, \cite[(1.10)]{JohanssonPlancherel})
\begin{align}\label{Def:Kernel:Ratio:Poisson-Plancherel}
\Ke_{\PoPl(\xi)}(x, y) = \sqrt{\xi} \frac{ B_1^{(\xi)}(x ) B_0^{(\xi)}(y ) -  B_0^{(\xi)}(x ) B_1^{(\xi)}(y ) }{x - y} \Unens{x \neq y} + \sqrt{\xi} \dot{J}_{x}(2\sqrt{\xi}) \Unens{x = y}
\end{align}
with $ \dot{J}_x(t) := \frac{\partial}{\partial \alpha} J_\alpha(t)\big\vert_{\alpha = x} $ (see  e.g. \cite[10.2.4]{NISThandbook}) 

\medskip
\item and the \textit{integral form} is \cite[(22)]{OkounkovNATO}  
\begin{align}\label{Def:Kernel:Integral:Poisson-Plancherel}
\begin{aligned}
\Ke_{\PoPl(\xi)}(x, y) & = \oint_{\Uu \times \rho \Uu} \frac{z^{-x} \omega^{-y} }{1 - z \omega} e^{ \sqrt{\xi}( z - z\inv  + \omega - \omega\inv ) } \frac{d^*z}{z} \frac{d^*\omega}{\omega}, \qquad \rho < 1 \\
               & = \oint_{\Uu \times R \Uu} \frac{z^{-x} \omega^{+y} }{1 - z \omega\inv} e^{ \sqrt{\xi}( z - z\inv  -( \omega - \omega\inv) ) } \frac{d^*z}{z} \frac{d^*\omega}{\omega}, \qquad R > 1						
\end{aligned}
\end{align}

\end{itemize}

\medskip

Using the expansion $ \frac{ 1 }{1 - z \omega} = \sum_{k \geq 0} (z\omega)^k $ for $ z \in \Uu $ and $ \omega \in \rho\Uu $ with $ \rho < 1 $, one obtains \eqref{Def:Kernel:HankelConv:Poisson-Plancherel} from \eqref{Def:Kernel:Integral:Poisson-Plancherel} and \eqref{Def:SpecialFunc:Bessel1}.

The Hankel convolutive form \eqref{Def:Kernel:HankelConv:Poisson-Plancherel} is in direct analogy with the Hankel convolutive form \eqref{Def:Kernel:AiWithInt} of the Airy kernel~: $ \Keb_{\PoPl(\xi), s} $ is the square of the (discrete) Hankel convolution operator 
\begin{align*}
\Heb(B_s^{(\xi)}) : f \in \ell^2(\Nn) \mapsto \sum_{k \geq 0} B_s^{(\xi)}( \cdot + k) f(k) \in \ell^2(\Nn)
\end{align*}
namely
\begin{align}\label{Eq:SquareHankel:PoissonPlancherel}
\Keb_{\! \PoPl(\xi), s } = \Heb(B_s^{(\xi)})^2  
\end{align}
which is analogous to \eqref{Eq:SquareHankel:Airy} on $ L^2(\Rr_+) $. 


One could hope to extend the proof of theorem~\ref{Thm:TW2max} using a discrete analogue of $ \det(I - \Kb_t) = \exp\prth{ -\int_t^{+\infty} \trace( (I - \Kb_s)\inv \dot{\Kb}_s) ds   } $. Such a formula reads
\begin{align}\label{Eq:Fredholm=ExpTraceDiscrete}
\det(I - \Keb_N)_{\ell^2(\Nn)} = \exp\prth{ - \sum_{\ell \geq N} \trace\prth{ \log(I - \Keb_{\ell + 1}) - \log(I - \Keb_\ell ) }_{\ell^2(\Nn)} }
\end{align}

To simplify it, one needs the theory of Orthogonal Polynomials on the Unit Circle (OPUC).

\medskip
\subsection{OPUCs}\label{Subsec:SchurMeasure:OPUCs}

The main reference for this \S~is \cite{SimonOPUC} from which we will only present the properties that are required in the sequel. 

\medskip

A family $ (\Phi_k)_{k \geq 0} $ of monic polynomials (i.e. $ \Phi_n(z) = z^n + \cdots $) is orthogonal in $ L^2(\Uu, \mu) $ for a probability measure $ \mu $ on $ \Uu $ if 
\begin{align*}
\bracket{\Phi_k, \Phi_\ell}_{\!L^2(\mu)} := \oint_\Uu \Phi_k \overline{\Phi_\ell} \, d\mu = \norm{\Phi_k}_{L^2(\mu)}^2 \delta_{k, \ell}
\end{align*}

Defining the anti-$ L^2(\mu) $-unitary map $ ^{*, n} $ by $ P^{*, n}(z) := z^n \overline{P(\overline{z}\inv) } $ and dropping the $n$ by convention when the polynomial is of degree $n$, one can show that \cite[(2.17), (2.18)]{SimonOPUC}
\begin{align*}
\Phi_{n + 1}(z) = z \Phi_n(z) - \overline{\alpha_n} \Phi_n^*(z), \qquad \alpha_n := - \overline{\Phi_{n + 1}(0)}
\end{align*}

The coefficient $ (\alpha_k)_{k \geq 0} $ are called the \textit{Verblunsky coefficients} and the previous recursion is called the \textit{Szeg\"o recursion}. One can easily prove that \cite[(2.19)]{SimonOPUC}
\begin{align}\label{Eq:OPUC:NormWithVerblunsky}
\norm{\Phi_{n + 1}}_{L^2(\mu)}^2 = \prth{1 - \abs{\alpha_n}^2} \norm{\Phi_n }_{L^2(\mu)}^2 \qquad \Longrightarrow \qquad  \norm{\Phi_n }_{L^2(\mu)}^2 = \prod_{k = 0}^{n - 1} \prth{1 - \abs{\alpha_k}^2}
\end{align}
and that for a non trivial measure $ \mu $ (i.e. not supported on a finite set of points) $ \abs{\alpha_k} < 1 $ \cite[thm. 2.1]{SimonOPUC}. As a result, one has
\begin{align}\label{Eq:OPUC:IneqNorm}
\norm{\Phi_{n + 1}}_{L^2(\mu)} \leq \norm{\Phi_n }_{L^2(\mu)}
\end{align}

For a probability measure $ \mu $, one defines the Toeplitz determinant $ D_n(\mu) $ as
\begin{align*}
D_n(\mu) := \det\prth{ \oint_\Uu z^{i - j} d\mu(z) }_{0 \leq i, j \leq n}
\end{align*}

Hence, using linear combinations of lines and columns one gets 
\begin{align*}
D_n(\mu) := \det\prth{ \oint_\Uu \Phi_i(z) \overline{\Phi_j(z) } d\mu(z) }_{0 \leq i, j \leq n} = \prod_{j = 0}^n \norm{\Phi_j}_{L^2(\mu)}^2 = \prod_{j = 0}^{n - 1} (1 - \abs{\alpha_j}^2)^{n - j}
\end{align*}

This implies the Szeg\"o theorem \cite[(8.2)]{SimonOPUC}
\begin{align}\label{Eq:SzegoTheorem}
F(\mu) := \prod_{j \geq 0} (1 - \abs{\alpha_j}^2)  = \lim_{n \to +\infty} D_n(\mu)^{1/n}  = \lim_{n \to +\infty} \frac{D_{n + 1}(\mu)}{D_n(\mu)} = \lim_{n \to +\infty} \norm{\Phi_n}_{L^2(\mu)}^2
\end{align}

Note that $ F(\mu) $ can be equal to 0, in which case $ (\alpha_k)_k \notin \ell^2(\Nn) $ (see \cite[thm. 2.2]{SimonOPUC} for a consequence).

\medskip
\subsection{Independence structure in the Poisson-Plancherel measure}\label{Subsec:SchurMeasure:PoPlIndep}

The Cauchy formula for the Cauchy kernel \eqref{Eq:Schur:CauchyIdentity} in abstract alphabets $ \Ae, \Be $ and the (restricted) orthogonality of the Schur functions \eqref{Eq:Schur:RestrictedOrthogonality} gives
\begin{align*}
\int_{\Ue_N} H\crochet{\widehat{\omega}(\Ae U + \Be U\inv)} dU & = \int_{\Ue_N}\sum_{\lambda, \mu}  s_\lambda\crochet{\widehat{\omega}\Ae} s_\lambda(U) s_\mu\crochet{\widehat{\omega}\Be}s_\mu(U\inv) dU \\
                      & = \sum_{\ell(\lambda) \leq N}  s_\lambda\crochet{\widehat{\omega}\Ae} s_\lambda\crochet{\widehat{\omega}\Be} \\
                      & =  \sum_{ \lambda_1 \leq N}  s_\lambda\crochet{ \Ae} s_\lambda\crochet{ \Be}
\end{align*}

As a result,
\begin{align}\label{Eq:SchurMeas:BorOk}
\Proba{\Schur(\Ae, \Be)}{\lambdab_1 \leq  N} = H\crochet{-\Ae\Be} \int_{\Ue_N} H\crochet{\widehat{\omega}(\Ae U + \Be U\inv)} dU
\end{align}

Using Weyl integration formula \eqref{Eq:WeylIntegration} and the Andr\'eieff-Heine-Szeg\"o formula \eqref{Eq:AndreieffHeineSzego}, the random unitary matrix integral in the RHS can be transformed into the Toeplitz determinant of symbol $ z \mapsto H\crochet{\widehat{\omega}(z\Ae + z\inv \Be )} $ (see remark~\ref{Rk:WeylCUEToeplitz}). The determinantal character of $ \lambdab_1 $ allows moreover to write the LHS as a Fredholm determinant on $ \ell^2(N + \Nn^*) $. The formula thus obtained is the celebrated Borodin-Okounkov formula \cite{BorodinOkounkov} that was also discovered by Geronimo and Case the context of the inverse scattering method \cite{CaseGeronimo} and that meanwhile had several other proofs and generalisations \cite{BaikDeiftRains, BasorWidom, BoettcherBDR, BoettcherBorOkBis, BoettcherWidomSzJa}.  

Let us only consider the Plancherel case $ \Ae = \Be = \sqrt{\xi} \Eee $. One has $ H\crochet{ -\Ae\Be} = H\crochet{-\xi \Eee} = e^{- \xi} $, and $ H\crochet{\widehat{\omega}\Ae U} = e^{p_1\crochet{ \sqrt{\xi} U } } = e^{  \sqrt{\xi} \trace(U) } $, hence
\begin{align*}
\Proba{\PoPl(\xi)}{\lambdab_1 \leq  N} = e^{-\xi} \int_{\Ue_N} e^{\sqrt{\xi} \trace(U + U\inv) } dU = e^{-\xi} \det \Tt_N(\Ie_\xi), \qquad \Ie_\xi(z) := e^{\sqrt{\xi} (z + z\inv) }
\end{align*}

Define the probability measure on $ \Uu $
\begin{align*}
\mu_\xi(d\theta) := \frac{ e^{\sqrt{\xi} (z + z\inv) } }{ I_0(\sqrt{\xi}) } \frac{d^*z}{z}, \qquad I_0(\sqrt{\xi}) := \crochet{z^0} e^{\sqrt{\xi} (z + z\inv) }
\end{align*}
where the normalisation $ I_0(\sqrt{\xi}) $ is the $ 0 $-th modified Bessel function defined by
\begin{align*}
I_k(s) := \crochet{z^k} e^{s (z + z\inv) }
\end{align*}

We also define~:
\begin{itemize}

\item $ \prth{ \Phi_k^{(\xi)} }_{k \geq 0} $ as the OPUCs for $ L^2(\Uu, \mu_\xi) $, 

\item $ (\alpha_k(\xi))_{k \geq 0} $ as their Verblunsky coefficients.

\end{itemize}

Then, writing the Toeplitz determinant as a product of norms of OPUCs gives
\begin{align*}
\Proba{\PoPl(\xi)}{\lambdab_1 \leq N} & = e^{-\xi} \prod_{k = 0}^{N - 1} \oint_\Uu \abs{ \Phi_k^{(\xi)}(z) }^2 e^{\sqrt{\xi} (z + z\inv) } \frac{d^*z}{z} \\
                  & = e^{-\xi} \prod_{k = 0}^{N - 1} I_0(\sqrt{\xi}) \norm{ \Phi_k^{(\xi)} }_{L^2(\Uu, \mu_\xi)}^2 \\
                  & = e^{-\xi} \prod_{k = 0}^{N - 1} I_0(\sqrt{\xi}) \prod_{j = 0}^{k - 1} (1 - \abs{\alpha_j(\xi)}^2)
\end{align*}

Set
\begin{align}\label{Eq:SchurQl:InvNorm}
\begin{aligned}
\pi_k(\xi) & := I_0(\sqrt{\xi}) \norm{ \Phi_k^{(\xi)} }_{L^2(\Uu, \mu_\xi)}^2 = \oint_\Uu \abs{ \Phi_k^{(\xi)}(z) }^2 e^{\sqrt{\xi} (z + z\inv) } \frac{d^*z}{z} \\
q_k(\xi) & := \frac{1}{\pi_k(\xi)}
\end{aligned}
\end{align}

Letting $ N\to +\infty $ gives the the Szeg\"o theorem \eqref{Eq:SzegoTheorem}
\begin{align*}
1 = e^{-\xi} \prod_{k \geq  0} \pi_k(\xi)
\end{align*}

Note that when the measure $ \mu $ is not a probability measure, one needs to take into account the normalisation factor and replace the norms by the $ \pi_k $s. In this setting, the Szeg\"o theorem/the absolute convergence of the product gives
\begin{align*}
\pi_k(\xi) \tendvers{k}{+\infty} 1 \qquad\Longleftrightarrow \qquad  \norm{ \Phi_k^{(\xi)} }_{L^2(\Uu, \mu_\xi)}^2 \tendvers{k}{+\infty} I_0(\sqrt{\xi})\inv = \prod_{j \geq 0} (1 - \abs{\alpha_j(\xi)}^2)
\end{align*}

As a consequence of the previous computations, 
\begin{align*}
\Proba{\PoPl(\xi)}{\lambdab_1 \leq N} = \prod_{k \geq N} \pi_k(\xi)\inv =: \prod_{k \geq N} q_k(\xi) = \prod_{\ell \geq 0} q_{\ell + N}(\xi)
\end{align*}



We are now ready to state the

\begin{shaded}
\begin{theorem}[$ \lambdab_1^{\PoPl(\xi)} $ is a maximum of a sequence of independent random variables]\label{Thm:PoissonPlancherel}
There exist i.i.d. non negative random variables $ (Z_k^{\PoPl(\xi)})_{k \geq 0} $ such that
\begin{align}\label{EqMax:PoissonPlancherel}
\boxed{\lambdab_1^{\PoPl(\xi)} \eqlaw \max_{k \geq 0}\! \ensemble{  Z_k^{\PoPl(\xi)} - k }  }  
\end{align}

Moreover  
\begin{align}\label{Eq:DescriptionPoissonPlancherelZk}
Z_1^{\PoPl(\xi)} \eqlaw \inf\!\ensemble{j \geq 0 \, /\, \forall m \geq j, \ B_j(\xi) = 0}
\end{align}
where the $ (B_j(\xi))_{j \geq 0} $ are independent $ \ensemble{0, 1} $-Bernoulli random variables of expectation $ \abs{\alpha_j(\xi) }^2   $.
\end{theorem}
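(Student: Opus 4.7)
My plan is to read off the max-independence structure from the OPUC factorisation already derived in \S~\ref{Subsec:SchurMeasure:PoPlIndep}, and then to rewrite the factors $q_k(\xi)$ as probabilities attached to a concrete Bernoulli scheme. The starting point is the identity
\begin{align*}
\Proba{\PoPl(\xi)}{\lambdab_1 \leq N} = \prod_{k \geq N} q_k(\xi) = \prod_{m \geq 0} q_{m + N}(\xi),
\end{align*}
so if $(Z_k)_{k \geq 0}$ are i.i.d. with common cumulative distribution function $F_Z(n) := q_n(\xi)$, then
\begin{align*}
\Prob{ \max_{k \geq 0} (Z_k - k) \leq N } = \prod_{k \geq 0} F_Z(N + k) = \prod_{m \geq N} q_m(\xi) = \Proba{\PoPl(\xi)}{\lambdab_1 \leq N}
\end{align*}
which is \eqref{EqMax:PoissonPlancherel}. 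So the real work is (i) to check that $F_Z = q_{\cdot}(\xi)$ really is a CDF on $\Nn$, and (ii) to identify the law it defines with the hitting-time description \eqref{Eq:DescriptionPoissonPlancherelZk}.

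For (i), monotonicity $q_n \leq q_{n+1}$ follows immediately from \eqref{Eq:OPUC:IneqNorm}, which gives $\pi_{n+1} \leq \pi_n$ hence $q_{n+1} \geq q_n$. Positivity is obvious, and the limit $q_n \to 1$ as $n \to +\infty$ is a direct translation of the Szeg\"o theorem \eqref{Eq:SzegoTheorem}: the identity $1 = e^{-\xi} \prod_{k \geq 0} \pi_k(\xi)$ obtained by letting $N \to +\infty$ in the OPUC expression for $\Proba{\PoPl(\xi)}{\lambdab_1 \leq N}$ forces $\pi_k(\xi) \to 1$, and hence $q_k(\xi) \to 1$. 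This also gives the useful companion formula $\prod_{j \geq 0}(1 - \abs{\alpha_j(\xi)}^2) = I_0(\sqrt{\xi})\inv$.

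For (ii), the decisive rewriting is to telescope $q_n$ against this infinite product. Using $\pi_k(\xi) = I_0(\sqrt{\xi}) \prod_{j = 0}^{k - 1}(1 - \abs{\alpha_j(\xi)}^2)$ from \eqref{Eq:OPUC:NormWithVerblunsky}/\eqref{Eq:SchurQl:InvNorm} and the value of $\prod_{j \geq 0}(1 - \abs{\alpha_j(\xi)}^2)$ just computed, one gets
\begin{align*}
q_n(\xi) = \frac{1}{\pi_n(\xi)} = \frac{ \prod_{j \geq 0}(1 - \abs{\alpha_j(\xi)}^2) }{ \prod_{j = 0}^{n - 1}(1 - \abs{\alpha_j(\xi)}^2) } = \prod_{j \geq n} (1 - \abs{\alpha_j(\xi)}^2).
\end{align*}
If $(B_j(\xi))_{j \geq 0}$ is an independent sequence of Bernoulli random variables with $\Prob{B_j(\xi) = 1} = \abs{\alpha_j(\xi)}^2$ and if $T := \inf\{j \geq 0 \, /\, \forall m \geq j, \ B_m(\xi) = 0\}$, then $\{T \leq n\}$ is exactly the event $\bigcap_{m \geq n}\{B_m(\xi) = 0\}$ (the summability $\sum_j \abs{\alpha_j(\xi)}^2 < +\infty$ needed to make $T$ a.s.~finite follows from the positivity $F(\mu_\xi) = I_0(\sqrt{\xi})\inv > 0$), whence by independence $\Prob{T \leq n} = \prod_{j \geq n}(1 - \abs{\alpha_j(\xi)}^2) = q_n(\xi) = F_Z(n)$.

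The main conceptual obstacle is the \emph{a priori} non-probabilistic nature of the ratio $q_n(\xi) = 1/\pi_n(\xi)$: nothing in the OPUC derivation tells us that it should be a CDF. The point of the argument above is that the combination of the Szeg\"o theorem (which plays the role of a ``mass conservation'' at $+\infty$) and the monotonicity \eqref{Eq:OPUC:IneqNorm} (which plays the role of monotonicity of a CDF) together produce, essentially for free, a probability law whose interpretation as a hitting time of the Verblunsky Bernoulli scheme is then transparent. No further analytic input (e.g., asymptotics of $\alpha_n(\xi)$) is needed for this structural statement.
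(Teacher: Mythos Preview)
Your proof is correct and follows essentially the same route as the paper's: you use the OPUC factorisation $\Proba{\PoPl(\xi)}{\lambdab_1 \leq N} = \prod_{\ell \geq 0} q_{\ell + N}(\xi)$, verify that $q_n(\xi)$ is a CDF via the monotonicity \eqref{Eq:OPUC:IneqNorm} and the Szeg\"o theorem, rewrite $q_n(\xi) = \prod_{j \geq n}(1 - \abs{\alpha_j(\xi)}^2)$, and identify this with the hitting-time law of the Verblunsky Bernoulli scheme. The only cosmetic difference is that you match the CDF directly via $\{T \leq n\} = \bigcap_{m \geq n}\{B_m = 0\}$, whereas the paper computes the pmf $\Prob{Z = \ell}$ first; your explicit mention of $\sum_j \abs{\alpha_j(\xi)}^2 < \infty$ (from $F(\mu_\xi) > 0$) to ensure $T < \infty$ a.s.\ is a nice touch the paper leaves implicit.
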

\end{shaded}


\begin{proof}
One thus has
\begin{align*}
\Proba{\PoPl(\xi)}{\lambdab_1 \leq N} = \prod_{\ell \geq 0} q_{\ell + N}(\xi)
\end{align*}
with 
\begin{align*}
0 \leq q_\ell (\xi) = \frac{1}{I_0(\sqrt{\xi})} \norm{ \Phi_\ell^{(\xi)} }_{L^2(\Uu, \mu_\xi)}^{-2} \tendvers{\ell}{+\infty} 1
\end{align*}

Moreover, since norms of OPUCs are decreasing by \eqref{Eq:OPUC:IneqNorm}, $ (q_\ell(\xi))_\ell $ is decreasing. This can also be seen using \eqref{Eq:OPUC:NormWithVerblunsky}, noticing that $ 1 - \abs{\alpha_k(\xi)}^2 < 1 $ and writing
\begin{align*}
q_\ell (\xi) & = \frac{1}{I_0(\sqrt{\xi})} \norm{ \Phi_\ell^{(\xi)} }_{L^2(\Uu, \mu_\xi)}^{-2} = \prod_{k \geq 0} (1 - \abs{\alpha_k(\xi)}^2) \times \prod_{j = 0}^{\ell - 1} \frac{1}{1 - \abs{\alpha_j(\xi) }^2} \\
               & = \prod_{k \geq \ell} (1 - \abs{\alpha_k(\xi)}^2)
\end{align*}

As a result, there exists a random variable $ Z^{\PoPl(\xi)} \in \Nn $ such that 
\begin{align*}
q_\ell (\xi) = \Prob{ Z^{\PoPl(\xi)}  \leq \ell }
\end{align*}
and in particular, with an i.i.d. sequence $ (Z^{\PoPl(\xi)}_\ell)_\ell $ of such random variables
\begin{align*}
\Proba{\PoPl(\xi)}{\lambdab_1 \leq N} & = \prod_{\ell \geq 0}  \Prob{ Z_\ell^{\PoPl(\xi)}  \leq \ell + N } = \prod_{\ell \geq 0}  \Prob{ Z_\ell^{\PoPl(\xi)} - \ell  \leq   N }\\
                     & = \Prob{ \max_{\ell \geq 0} \!\ensemble{ Z_\ell^{\PoPl(\xi)} - \ell } \leq N }
\end{align*}

Last, \eqref{Eq:OPUC:NormWithVerblunsky} yields
\begin{align*}
\Prob{ Z_1^{\PoPl(\xi)} = \ell } & = q_\ell(\xi) - q_{\ell - 1}(\xi) = \frac{1}{I_0(\sqrt{\xi}) } \prth{ \frac{1}{\norm{\Phi_\ell^{(\xi) } }^2} - \frac{1}{ \norm{\Phi_{\ell - 1}^{(\xi) } }^2 } } \\
                & = \frac{1}{I_0(\sqrt{\xi})}\times \frac{1}{\norm{\Phi_{\ell - 1}^{ (\xi) } }^2 } \times \prth{ \frac{1}{1 - \abs{\alpha_{\ell - 1}(\xi) }^2 } - 1 } \\ 
                & = \prod_{k \geq 0} (1 - \abs{\alpha_k(\xi)}^2) \times \prod_{j = 0}^{\ell - 2} \frac{1}{1 - \abs{\alpha_j(\xi) }^2} \times \frac{ \abs{\alpha_{\ell - 1}(\xi) }^2 }{1 - \abs{\alpha_{\ell - 1}(\xi)}^2 } \\
                & =: \abs{\alpha_{\ell - 1}(\xi) }^2  \prod_{k \geq \ell} (1 - \abs{\alpha_k(\xi)}^2) \\ 
                & =  \Prob{ B_{\ell - 1}(\xi) = 1} \prod_{k \geq \ell} \Prob{ B_k(\xi) = 0 }  \\
                & = \Prob{ \taub_{\! \partial}(\xi) = \ell } 
\end{align*}
where
\begin{align*}
\taub_{\!\partial}(\xi) := \inf\!\ensemble{ j \geq 0 \, /\, B_m(\xi) = 0 \ \forall m \geq j}
\end{align*}
is the entrance time to the ``cemetary'' $ \partial $ for the ``chain'' $ (B_k(\xi))_k $ (here an independent sequence).
\end{proof}

\medskip
\subsection{The general symmetric Schur measure}\label{Subsec:SchurMeasure:General}

We now use \eqref{Eq:SchurMeas:BorOk} in the case where $ \Ae = \overline{\Be} $ if $ \Ae, \Be \subset \Cc $, i.e. $ g_k\crochet{\Ae} = \overline{g_k\crochet{\Be}} $ for $ g_k = p_k $, $ e_k $ or $ h_k $. One has in particular $ H\crochet{\Ae\overline{\Ae}} = \exp\prth{ \sum_{k \geq 1} \frac{1}{k} \abs{p_k\crochet{\Ae}}^2 } $ and $ H\crochet{  \widehat{\omega} (z\Ae  + z\inv\overline{\Ae} ) } = \abs{H\crochet{ z \widehat{\omega} \Ae } }^2 $ for $ z \in \Uu $.

We define the probability measure on $ \Uu $
\begin{align*}
\mu_\Ae(d\theta) := \frac{ \abs{H\crochet{ z \widehat{\omega} \Ae } }^2 }{ \Ze(\Ae) } \frac{d^*z}{z}, \qquad \Ze(\Ae) := \crochet{z^0} \abs{H\crochet{ z \widehat{\omega} \Ae } }^2
\end{align*}
and we define the OPUCs for $ L^2(\Uu, \mu_\Ae) $ as $ \prth{ \Phi_k^{(\Ae)} }_{k \geq 0} $ and their Verblunsky coefficients as $ (\alpha_k(\Ae))_{k \geq 0} $. As a result, 
\begin{align*}
\Proba{\Schur(\Ae, \overline{\Ae})}{\lambdab_1 \leq N}  =  H\crochet{- \Ae\overline{\Ae}} \prod_{k = 0}^{N - 1} \Ze(\Ae) \prod_{j = 0}^{k - 1} (1 - \abs{\alpha_j(\Ae)}^2)
\end{align*}

We moreover suppose that the specialisation $ \Ae $ is well-defined in the sense that $ 0 < H\crochet{\Ae\overline{\Ae}}  < \infty $. The Szeg\"o theorem is then 
\begin{align*}
H\crochet{\Ae\overline{\Ae}} = \prod_{k \geq 0} \Ze(\Ae) \prod_{j = 0}^{k - 1} (1 - \abs{\alpha_j(\Ae)}^2)
\end{align*}
and the absolute convergence of the product implies that 
\begin{align*}
\Ze(\Ae) = \prod_{j \geq 0}  (1 - \abs{\alpha_j(\Ae)}^2)\inv
\end{align*}

Thus, 
\begin{align*}
\Proba{\Schur(\Ae, \overline{\Ae})}{\lambdab_1 \leq N} & =   \prod_{k \geq N} \Ze(\Ae)\inv \prod_{j = 0}^{k - 1} (1 - \abs{\alpha_j(\Ae)}^2)\inv \\
                & =: \prod_{k \geq N} q_k(\Ae) = \prod_{\ell \geq 0} q_{\ell + N}(\Ae)
\end{align*}
where
\begin{align*}
q_k(\Ae) := \Ze(\Ae)\inv \prod_{j = 0}^{k - 1} (1 - \abs{\alpha_j(\Ae)}^2)\inv = \prod_{j \geq k}  (1 - \abs{\alpha_j(\Ae)}^2)  
\end{align*}
defines a random variable $ Z^{\Schur(\Ae, \overline{\Ae})} $ such that
\begin{align*}
q_k(\Ae) = \Prob{Z^{\Schur(\Ae, \overline{\Ae})} \leq k } \quad \Longrightarrow\quad \Prob{Z^{\Schur(\Ae, \overline{\Ae})} = k } = \abs{\alpha_{k - 1}(\Ae)}^2 \prod_{j \geq k} (1 - \abs{\alpha_j(\Ae)}^2)
\end{align*}

As a result, one has the

\begin{shaded}
\begin{theorem}[$ \lambdab_1^{\Schur(\Ae, \overline{\Ae})} $ is a maximum of a sequence of independent random variables]\label{Thm:SchurMeas}
For a sequence $ (Z^{\Schur(\Ae, \overline{\Ae})}_k)_{k \geq 0} $ of i.i.d. random variables equal in law to $ Z^{\Schur(\Ae, \overline{\Ae})} $, one has
\begin{align}\label{EqMax:SchurMeas}
\boxed{\lambdab_1^{\Schur(\Ae, \overline{\Ae})} \eqlaw \max_{k \geq 0}\! \ensemble{  Z_k^{\Schur(\Ae, \overline{\Ae})} - k }  }  
\end{align}

Moreover  
\begin{align}\label{Eq:DescriptionSchurMeasZk}
Z_1^{\Schur(\Ae, \overline{\Ae})} \eqlaw \inf\!\ensemble{j \geq 0 \, /\, \forall m \geq j, \ B_j(\Ae) = 0}
\end{align}
where the $ (B_j(\Ae))_{j \geq 0} $ are independent $ \ensemble{0, 1} $-Bernoulli random variables of expectation $ \abs{\alpha_j(\Ae) }^2   $.
\end{theorem}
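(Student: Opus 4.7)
The plan is to mirror exactly the structure of the proof of theorem~\ref{Thm:PoissonPlancherel}, replacing the Poisson specialisation $\Ae = \Be = \sqrt{\xi}\,\Eee$ by a general symmetric pair $(\Ae, \overline{\Ae})$. The key input, namely the identity
\begin{align*}
\Proba{\Schur(\Ae, \overline{\Ae})}{\lambdab_1 \leq N} = \prod_{\ell \geq 0} q_{\ell + N}(\Ae),
\qquad q_k(\Ae) := \prod_{j \geq k} (1 - \abs{\alpha_j(\Ae)}^2),
\end{align*}
has already been obtained in the body of \S~\ref{Subsec:SchurMeasure:General} by combining \eqref{Eq:SchurMeas:BorOk} (Borodin--Okounkov), the Weyl integration formula \eqref{Eq:WeylIntegration}, the transformation of the unitary integral into a Toeplitz determinant \eqref{Eq:CUE=Toeplitz}, and the expansion of this determinant as a product of squared OPUC norms via \eqref{Eq:OPUC:NormWithVerblunsky}. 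So the work reduces to interpreting $(q_k(\Ae))_{k \geq 0}$ as a cumulative distribution function.

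Next, I would check that $q_k(\Ae) \in [0, 1]$ is non-decreasing in $k$ with $q_k(\Ae) \to 1$ as $k \to +\infty$. Monotonicity is immediate from $\abs{\alpha_k(\Ae)}^2 < 1$, itself a consequence of the non-triviality of $\mu_\Ae$ (itself granted by the assumption $0 < H\crochet{\Ae\overline{\Ae}} < \infty$, which forbids $\mu_\Ae$ from being supported on finitely many points, see \cite[thm. 2.1]{SimonOPUC}). Convergence to $1$ is precisely the content of the Szeg\"o theorem \eqref{Eq:SzegoTheorem} applied to $\mu_\Ae$, using $\Ze(\Ae) = \prod_{j \geq 0}(1 - \abs{\alpha_j(\Ae)}^2)^{-1}$. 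Hence there exists a random variable $Z^{\Schur(\Ae, \overline{\Ae})}$ taking values in $\Nn$ with $\Prob{Z^{\Schur(\Ae, \overline{\Ae})} \leq k} = q_k(\Ae)$.

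Taking i.i.d.\ copies $(Z_\ell^{\Schur(\Ae, \overline{\Ae})})_{\ell \geq 0}$ of this law and rewriting the product exactly as in the proof of theorem~\ref{Thm:PoissonPlancherel}~:
\begin{align*}
\prod_{\ell \geq 0} q_{\ell + N}(\Ae)
= \prod_{\ell \geq 0} \Prob{Z_\ell^{\Schur(\Ae, \overline{\Ae})} - \ell \leq N}
= \Prob{\max_{\ell \geq 0}\!\ensemble{Z_\ell^{\Schur(\Ae, \overline{\Ae})} - \ell} \leq N}
\end{align*}
yields \eqref{EqMax:SchurMeas}. The description \eqref{Eq:DescriptionSchurMeasZk} follows from the telescoping identity
\begin{align*}
\Prob{Z_1^{\Schur(\Ae, \overline{\Ae})} = \ell}
= q_\ell(\Ae) - q_{\ell - 1}(\Ae)
= \abs{\alpha_{\ell - 1}(\Ae)}^2 \prod_{j \geq \ell}(1 - \abs{\alpha_j(\Ae)}^2),
\end{align*}
which is an exact replica of the last display of the proof of theorem~\ref{Thm:PoissonPlancherel}, and this is in turn recognised as $\Prob{\taub_\partial(\Ae) = \ell}$ for the Bernoulli sequence $(B_j(\Ae))_{j \geq 0}$ of parameters $\abs{\alpha_j(\Ae)}^2$.

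The only non-routine point, and hence the main (mild) obstacle, is the justification that the symmetric specialisation $\Ae$ produces a genuine non-trivial probability measure $\mu_\Ae$ on $\Uu$ with Verblunsky coefficients in $\ell^2 \setminus \ell^1$-sense compatible with the Szeg\"o asymptotics required above. The hypothesis $0 < H\crochet{\Ae\overline{\Ae}} < \infty$ rules out both degenerate cases ($\mu_\Ae$ finitely supported, or $F(\mu_\Ae) = 0$), so the OPUC machinery of \cite{SimonOPUC} applies verbatim. Everything else is an algebraic repackaging of the Poisson--Plancherel argument.
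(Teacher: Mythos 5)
Your proposal is correct and follows essentially the same route as the paper: the theorem is exactly the probabilistic repackaging of the identity $\Proba{\Schur(\Ae,\overline{\Ae})}{\lambdab_1 \leq N} = \prod_{\ell \geq 0} q_{\ell+N}(\Ae)$ derived via Borodin--Okounkov, the Toeplitz/OPUC norm product and the Szeg\"o theorem, with $q_k(\Ae) = \prod_{j \geq k}(1-\abs{\alpha_j(\Ae)}^2)$ read as a cumulative distribution function and the telescoping computation identifying the law with the entrance time of the Bernoulli sequence, just as in theorem~\ref{Thm:PoissonPlancherel}. The only cosmetic imprecision is your phrase about Verblunsky coefficients in an ``$\ell^2\setminus\ell^1$-sense'': all that is needed is the Szeg\"o condition $\sum_j \abs{\alpha_j(\Ae)}^2 < \infty$ (equivalently $0 < H\crochet{\Ae\overline{\Ae}} < \infty$), and non-triviality of $\mu_\Ae$ is automatic since it has a density.
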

\end{shaded}


\medskip
\subsection{Remark on the discrete Fuchs lemma}\label{SubSec:SchurMeasure:Fuchs}

In the vein of \S~\ref{SubSec:TW:SecondApproach}, we have the~:

\begin{shaded}
\begin{lemma}[Discrete unnormalised and varying interval Fuchs lemma]\label{Lemma:Fuchs:Discrete}
$ $

$ \bullet $ For $ I \subset \Nn $, let $ \Keb_\ell : \ell^2(I) \to \ell^2(I) $ be self-adjoint. Let $ (g_\ell, \lambda_\ell) $ be such that $ \Keb_\ell g_\ell = \lambda_\ell g_\ell $ for all $ \ell \in \Nn $. Define $ \Delta u_k := u_{k + 1} - u_k $ and $ \nabla u_k := u_k - u_{k - 1} $. Then,
\begin{align*}
\Delta\lambda_\ell = \frac{ \bracket{ \Delta\Keb_\ell g_{\ell + 1}, g_\ell}_{\! \ell^2(I)} }{ \bracket{ g_{\ell + 1}, g_\ell}_{\! \ell^2(I)} }
\end{align*}

\medskip
$ \bullet $ For $ I_\ell := \ell + \Nn := \{\ell, \ell + 1, \dots, \} $, let $ \Keb : \ell^2(I_\ell) \to \ell^2(I_\ell) $ be self-adjoint with a kernel independent of $ \ell $. Let $ (g_\ell, \lambda_\ell) $ be such that $ \Keb_\ell g_\ell = \lambda_\ell g_\ell $ (with an index to denote the action on $ \ell^2(I_\ell) $). Then,
\begin{align*}
\lambda_{\ell + 1} = \lambda_\ell \frac{ \bracket{g_\ell, g_{\ell + 1} }_{\ell^2(I_{\ell + 1}) } }{\bracket{g_\ell, g_{\ell + 1} }_{\ell^2(I_\ell) } }  
\end{align*}
\end{lemma}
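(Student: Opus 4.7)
The plan is to adapt the two continuous Fuchs lemmas (Lemmas \ref{Lemma:Fuchs:Fixed} and \ref{Lemma:Fuchs:Varying}) to the discrete setting, replacing derivatives with finite differences and integrals with sums.

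For the first assertion (fixed interval, unnormalised version), the strategy exactly parallels the proof of Lemma \ref{Lemma:Fuchs:Fixed}. I will subtract the two eigenvalue equations $\Keb_{\ell+1} g_{\ell+1} = \lambda_{\ell+1} g_{\ell+1}$ and $\Keb_\ell g_\ell = \lambda_\ell g_\ell$, writing $\Keb_{\ell+1} = \Keb_\ell + \Delta\Keb_\ell$, $g_{\ell+1} = g_\ell + \Delta g_\ell$ and $\lambda_{\ell+1} = \lambda_\ell + \Delta\lambda_\ell$. After expansion this yields
\begin{align*}
\Keb_\ell \Delta g_\ell + \Delta\Keb_\ell \, g_{\ell+1} = \Delta\lambda_\ell \cdot g_{\ell+1} + \lambda_\ell \Delta g_\ell .
\end{align*}
Pairing with $g_\ell$ in $\ell^2(I)$, the terms involving $\Delta g_\ell$ cancel by the self-adjointness identity $\bracket{\Keb_\ell \Delta g_\ell, g_\ell} = \bracket{\Delta g_\ell, \Keb_\ell g_\ell} = \lambda_\ell \bracket{\Delta g_\ell, g_\ell}$, leaving exactly the claimed quotient formula.

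For the second assertion (varying interval, $\ell$-independent kernel), the strategy mimics Fuchs's original symmetric identity recalled in Remark \ref{Rk:SpecialFuchs}. The goal is to prove
\begin{align*}
\lambda_{\ell+1} \bracket{g_\ell, g_{\ell+1}}_{\ell^2(I_\ell)} = \lambda_\ell \bracket{g_\ell, g_{\ell+1}}_{\ell^2(I_{\ell+1})},
\end{align*}
from which the announced ratio follows by division. I will compute $\lambda_{\ell+1}\bracket{g_\ell, g_{\ell+1}}_{\ell^2(I_{\ell+1})}$ by summing the eigenvalue equation for $g_{\ell+1}$ against $g_\ell$ on $I_{\ell+1}$, then use the symmetry $K(x,y)=K(y,x)$ to swap the order of summation and apply the eigenvalue equation for $g_\ell$ on $I_\ell$. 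Since $I_\ell \setminus I_{\ell+1} = \{\ell\}$, this produces a single boundary contribution $-g_\ell(\ell) \sum_{y \in I_{\ell+1}} K(\ell,y) g_{\ell+1}(y)$. The key observation is that this sum coincides with $\lambda_{\ell+1} g_{\ell+1}(\ell)$ once $g_{\ell+1}$ is extended to the missing index $\ell$ by the natural rule $g_{\ell+1}(\ell) := \lambda_{\ell+1}^{-1}\sum_{y \in I_{\ell+1}} K(\ell,y) g_{\ell+1}(y)$ (i.e. by continuing the convolution formula to a point outside $I_{\ell+1}$). With this convention, the boundary term absorbs exactly into $g_\ell(\ell) g_{\ell+1}(\ell) = \bracket{g_\ell, g_{\ell+1}}_{\ell^2(I_\ell)} - \bracket{g_\ell, g_{\ell+1}}_{\ell^2(I_{\ell+1})}$, and the symmetric identity follows.

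The main obstacle is conceptual rather than computational: one must pin down the correct extension of the eigenfunction $g_{\ell+1}$ outside $I_{\ell+1}$ so that the boundary term produced by the change of summation range matches the single-point discrepancy between the two inner products. This is the exact discrete counterpart of the device used in the continuous varying-interval lemma, where the eigenfunction on $[t,+\infty)$ is extended to the half-line by the same integral formula; once the convention is fixed, the remainder of the argument is a one-line computation.
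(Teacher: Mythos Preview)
Your argument for the first bullet is essentially identical to the paper's: discrete-differentiate the eigenvalue equation, pair with $g_\ell$, and use self-adjointness to kill the $\Delta g_\ell$ terms.

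For the second bullet you take a genuinely different (though equally short) route. The paper does \emph{not} prove the symmetric identity directly; instead it reduces part~2 to part~1 by computing $(\Delta\Keb_\ell)g_{\ell+1}$ explicitly as $-\Ke(\cdot,\ell)\,g_{\ell+1}(\ell)$, so that $\bracket{(\Delta\Keb_\ell)g_{\ell+1},g_\ell}_{\ell} = -\lambda_\ell\, g_\ell(\ell)\,g_{\ell+1}(\ell)$, and then substitutes into the first-bullet formula to obtain $\lambda_{\ell+1}=\lambda_\ell\bigl(1-g_\ell(\ell)g_{\ell+1}(\ell)/\bracket{g_{\ell+1},g_\ell}_\ell\bigr)=\lambda_\ell\,\bracket{g_{\ell+1},g_\ell}_{\ell+1}/\bracket{g_{\ell+1},g_\ell}_\ell$. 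Your approach via the Fuchs symmetric identity of Remark~\ref{Rk:SpecialFuchs} is correct and slightly more self-contained; the paper's approach has the virtue of exhibiting part~2 as a corollary of part~1. Both routes require extending $g_{\ell+1}$ to the missing index $\ell$ by the convolution rule, a point you make explicit and the paper leaves tacit.
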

\end{shaded}


\begin{proof}
A discrete differentiation of $ \Keb_\ell g_\ell = \lambda_\ell g_\ell $ yields to
\begin{align*}
\Delta\Keb_\ell g_{\ell + 1} + \Keb_\ell \Delta g_\ell = \Delta\lambda_\ell g_{\ell + 1} + \lambda_\ell \Delta g_\ell
\qquad\Longleftrightarrow\qquad
(\Keb_\ell - \lambda_\ell I) \Delta g_\ell = - \Delta(\Keb_\ell - \lambda_\ell I) g_{\ell + 1}
\end{align*}

Since $ \Keb_\ell^* = \Keb_\ell $ and $ (\Keb_\ell - \lambda_\ell I) \Delta g_\ell \in \im(\Keb_\ell - \lambda_\ell I) $, one has $ (\Keb_\ell - \lambda_\ell I) \Delta g_\ell \perp \ker(\Keb_\ell - \lambda_\ell I) = \Rr g_\ell $. In the first case, one takes the scalar product with $ g_\ell $ to obtain the result. 

In the second case, one has
\begin{align*}
(\Delta \Keb_\ell) g_{\ell + 1} & = \sum_{r \geq \ell + 1} \Ke(\cdot, r) g_{\ell + 1}(r) - \sum_{r \geq \ell } \Ke(\cdot, r) g_{\ell + 1}(r) \\
                  & = - \Ke(\cdot, \ell) g_{\ell + 1}(\ell)
\end{align*}

Hence, with $ \bracket{\cdot, \cdot}_\ell := \bracket{\cdot, \cdot}_{\ell^2(I_\ell)} $
\begin{align*}
\bracket{ (\Delta \Keb_\ell) g_{\ell + 1} , g_\ell}_\ell 
                  & = - \bracket{\Ke(\cdot, \ell) g_{\ell + 1}(\ell) , g_\ell}_\ell \\
                  & = - \bracket{\Ke(\ell, \cdot) , g_\ell}_\ell g_{\ell + 1}(\ell) \\
                  & = - \Keb_\ell g_\ell(\ell)  g_{\ell + 1}(\ell) \\
                  & = - \lambda_\ell g_\ell(\ell) g_{\ell + 1}(\ell)
\end{align*}

Finally
\begin{align*}
\Delta\lambda_\ell & = \frac{ \bracket{ \Delta\Keb_\ell g_{\ell + 1}, g_\ell}_\ell }{ \bracket{ g_{\ell + 1}, g_\ell}_\ell } \\
                  & = - \lambda_\ell  \frac{ g_\ell(\ell) g_{\ell + 1}(\ell) }{ \bracket{ g_{\ell + 1}, g_\ell}_\ell }  
\end{align*}
namely
\begin{align*}
\lambda_{\ell + 1}  & = \lambda_\ell \prth{ 1 -  \frac{ g_\ell(\ell) g_{\ell + 1}(\ell) }{ \bracket{ g_{\ell + 1}, g_\ell}_\ell }  } \\
                    & = \lambda_\ell \frac{ \bracket{ g_{\ell + 1}, g_\ell}_\blue{\ell + 1}  }{\bracket{ g_{\ell + 1}, g_\ell}_\ell } 
\end{align*}
which gives the result.
\end{proof}

\medskip

Define $ \psi^* = \bracket{\psi, \cdot}_{\ell^2(I)} $ for the relevant choice of $ I $ (depending on which case we are concerned with).

In the case of the Poisson-Plancherel measure, a commuting difference operator was introduced in \cite[prop. 2.12]{BorodinOkounkovOlshanskiPlancherel}. It reads on $ \ell^2(\Nn) $ 
\begin{align*}
\De_\xi := \Delta\Mg_\id\nabla - \Mg_{\beta_s}, \qquad \beta_s(k) := \frac{(k + s)(k + 1 - 2\sqrt{\xi})}{\sqrt{\xi}}
\end{align*}

Moreover, for $ k = \pe{2\sqrt{\xi} + \xi^{1/6} x} $ and $ s = \pe{2\sqrt{\xi} + \xi^{1/6} S} $, it converges in the weak sense to $ \Le_{TW, S} := DXD - X(X + S) $.

By \eqref{Def:Kernel:HankelConv:Poisson-Plancherel}, one has $ \Delta\Keb_{\PoPl(\xi), \ell} = B_\ell^{(\xi)}\otimes (B_\ell^{(\xi)})^* $ acting on $ \ell^2(\Nn) $, and
\begin{align*}
\Delta\lambda_\ell = \frac{ \bracket{   g_{\ell + 1}, B_\ell^{(\xi)}}_{\! \ell^2(\Nn)} 
                            \bracket{    g_\ell, B_\ell^{(\xi)} }_{\! \ell^2(\Nn)} }{ \bracket{ g_{\ell + 1}, g_\ell}_{\! \ell^2(\Nn)} }
\end{align*}
which is not known to be non negative. The same can be said about the second expression in lemma~\ref{Lemma:Fuchs:Discrete} in case of $ \Keb_{\PoPl(\xi) } $ acting on $ \ell^2(\ell + \Nn) $.

\medskip
\subsection{A characterisation of the KPZ universality class with max-independence, II}\label{SubSec:SchurMeasure:CaracKPZ}  

Similarly to \S~\ref{SubSubSec:RMT:Painleve:CaracKPZ}, one can formalise when a convergence towards the Tracy-Widom or the Gumbel distribution occurs in the case of a max-independence structure obtained as a randomisation of the sequence of integers~:

\medskip

\begin{shaded}
\begin{theorem}[Convergence of Schur max-independence structures]\label{Thm:CvMaxRandomisedIntegers}
Let $ (Z_k(\xi))_{k \geq 1} $ be a sequence of i.i.d. random variables with a diverging parameter $ \xi \to +\infty $. Define $ p_k \equiv p_k(x_\xi) := \Prob{ Z(\xi) - k > x} =: \overline{F}_\xi(x + k) $ with $ x \equiv x_\xi := \pe{m_\xi + \sigma_\xi X } $. Suppose moreover that one has~:
\begin{enumerate}

\item either the local CLT
\begin{align}\label{Eq:ThmCvMaxSchur:LocalTCL}
\sigma_\xi \Prob{ Z(\xi) = \pe{m_\xi + \sigma_\xi X} } \tendvers{\xi}{+\infty} \phi(x)
\end{align}

\medskip
\item or (equivalently) the right LDP
\begin{align}\label{Eq:ThmCvMaxSchur:PGD}
\sigma_\xi \Prob{ Z(\xi) \geq  m_\xi + \sigma_\xi X  } \tendvers{\xi}{+\infty} \Phi(X) := \int_X^{+\infty} \phi 
\end{align}

\end{enumerate}

Suppose moreover that the previous convergences occur, as functions of $X$, in $ L^1([a, +\infty)) $ for all $a \in \Rr$ and that $ \id \cdot \phi^p \in L^1([a, +\infty)) $ for $ p \in \ensemble{1, 2} $. 

\medskip

Then,
\begin{align*}
\Prob{\max_{k \geq 0} \ensemble{Z_k(\xi) - k} \leq m_\xi + \sigma_\xi X} \tendvers{n}{+\infty} e^{ - \phi * \id_+(X) }
\end{align*}
\end{theorem}
\end{shaded}

\medskip

\begin{proof}
The Poisson approximation gives 
\begin{align*}
\Prob{ \max_{k \geq 0} \ensemble{Z_k(\xi) - k} \leq x} = \exp\prth{ - \sum_{k \geq 0} p_k } + O\prth{ \sum_{k \geq 0} p_k^2 }
\end{align*}

One has on the one hand
\begin{align*}
\sum_{k \geq 0} p_k  & = \sum_{\ell \geq x} \overline{F}_{\!\xi}(\ell) = \int_x^{+\infty}  \overline{F}_{\!\xi}(t) dt
\end{align*}
as one defines
\begin{align*}
\overline{F}_{\!\xi}(\ell) := \Prob{Z(\xi) > \ell } = \Prob{Z(\xi) > \pe{\ell} } = \overline{F}_{\!\xi}(\pe{\ell})
\end{align*}
for all $ \ell \in \Rr $ (equivalently, one can also take $ \Prob{Z(\xi) \geq \pe{\ell} } $, which does not change anything to the asymptotic analysis). 

On the other hand, 
\begin{align*}
\sum_{k \geq 0} p_k  & = \sum_{k \geq 0}  \sum_{\ell \geq k + x} p_\xi(\ell) , \qquad p_\xi(\ell) := \Prob{ Z(\xi) = \ell }  \\ 
               & = \sum_{ \ell \in \Zz } (\ell - x)_+ p_\xi(\ell) \\
               & = \int_\Rr (y - x)_+ p_\xi(\pe{y}) dy
\end{align*}
which shows that with this particular max-independence structure, the Poisson approximation already gives the final form.

Changing variable $ t = m_\xi + \sigma_\xi T $ and $ y = m_\xi + \sigma_\xi Y $ then gives
\begin{align*}
\sum_{k \geq 0} p_k  & = \int_X^{+\infty}  \sigma_\xi \overline{F}_{\!\xi}(m_\xi + \sigma_\xi T) dT \hspace{+1.4cm} 
                 \tendvers{\xi}{+\infty} \int_X^{+\infty} \Phi(T) dT \\
                     & = \int_\Rr (Y - X)_+ \sigma_\xi p_\xi(\pe{m_\xi + \sigma_\xi Y}) dY
                 \tendvers{\xi}{+\infty} \int_\Rr (Y - X)_+ \phi(Y) dY
\end{align*}
using respectively \eqref{Eq:ThmCvMaxSchur:PGD} and \eqref{Eq:ThmCvMaxSchur:LocalTCL}. Last, the same convergence but in $ L^2 $ allows to control the speed of convergence in the Poisson approximation into the form
\begin{align*}
\sigma_\xi \sum_{k \geq 0} p_k^2   & = \int_\Rr (Y - X)_+ \prth{ \sigma_\xi p_\xi(\pe{m_\xi + \sigma_\xi Y}) }^2 dY \\
                 & \tendvers{\xi}{+\infty} \int_\Rr (Y - X)_+ \phi(Y)^2 dY
\end{align*}
i.e. $ \sum_{k \geq 0} p_k^2  = O(\sigma_\xi\inv) $. This concludes the proof.
\end{proof}

\medskip
\subsection{Discussion}\label{SubSec:SchurMeasure:Discussion}

The equation \eqref{Eq:Fredholm=ExpTraceDiscrete} reads
\begin{align*}
\det(I - \Keb_N)_{\ell^2(\Nn)} & = \exp\prth{ - \sum_{s \geq N} \trace\prth{ \log(I - \Keb_{s + 1}) - \log(I - \Keb_s ) }_{\ell^2(\Nn)} } \\
                  & =  \exp\prth{ - \sum_{s \geq N} \trace\prth{ \log(I - (I - \Keb_s )\inv(\Keb_{s + 1} - \Keb_s)  )  }_{\ell^2(\Nn)} }
\end{align*}
the operator $ I - \Keb_s $ being invertible as $ \det(I - \Keb_s)_{\ell^2(\Nn)} \neq 0 $.


In the case of Poisson-Plancherel, the Hankel convolutive form \eqref{Def:Kernel:HankelConv:Poisson-Plancherel}/\eqref{Eq:SquareHankel:PoissonPlancherel} shows that
\begin{align*}
\Keb_N^{(\xi)} = \sum_{k \geq N} B_k^{(\xi)} \otimes (B_k^{(\xi)})^* \qquad \Longrightarrow \qquad \Keb_{s + 1}^{(\xi)} - \Keb_s^{(\xi)}  = - B_s^{(\xi)} \otimes (B_s^{(\xi)})^*
\end{align*}

This last operator is of rank one with non zero trace, which implies 
\begin{align*}
\det\prth{ I - \Keb_N^{(\xi)} }_{\ell^2(\Nn)} & = \exp\prth{ - \sum_{s \geq N} \trace\prth{ \log(I + (I - \Keb_s^{(\xi)} )\inv B_s^{(\xi)} \otimes (B_s^{(\xi)})^*  )  }_{\ell^2(\Nn)} }\\
                  & =  \exp\prth{ - \sum_{s \geq N} \log\prth{ 1 + \bracket{ (I - \Keb_s^{(\xi)} )\inv B_s^{(\xi)} , B_s^{(\xi)}   }_{\!\! \ell^2(\Nn)} } } \\
                  & = \prod_{s \geq N} \frac{1}{1 + \bracket{ (I - \Keb_s^{(\xi)} )\inv B_s^{(\xi)} , B_s^{(\xi)} }_{\!\! \ell^2(\Nn)} }
\end{align*}

Since $ \det\prth{ I - \Keb_N^{(\xi)} }_{\ell^2(\Nn)} = \prod_{s \geq N} q_s(\xi) $, one gets 
\begin{align}\label{Eq:SchurQl:1over}
q_s(\xi) = \frac{1}{ 1 + \bracket{ (I - \Keb_s^{(\xi)} )\inv B_s^{(\xi)} , B_s^{(\xi)} }_{\!\! \ell^2(\Nn)} } 
\end{align}

\medskip

One can also write \eqref{Eq:Fredholm=ExpTraceDiscrete} as
\begin{align*}
\det(I - \Keb_N)_{\ell^2(\Nn)} & = \exp\prth{ + \sum_{s \geq N} \trace\prth{ \log(I + (I - \Keb_{s + 1} )\inv(\Keb_{s + 1} - \Keb_s)  )  }_{\ell^2(\Nn)} } \\
                  & =  \exp\prth{ + \sum_{s \geq N} \trace\prth{ \log(I - (I - \Keb_{s + 1} )\inv B_s^{(\xi)} \otimes (B_s^{(\xi)})^*  )  }_{\ell^2(\Nn)} } \\
                  & = \prod_{s \geq N} \prth{1 - \bracket{ (I - \Keb_{s + 1}^{(\xi)} )\inv B_s^{(\xi)} , B_s^{(\xi)} }_{\!\! \ell^2(\Nn)} }
\end{align*}
implying also that
\begin{align}\label{Eq:SchurQl:1minus}
q_s(\xi) = 1 - \bracket{ (I - \Keb_{s + 1}^{(\xi)} )\inv B_s^{(\xi)} , B_s^{(\xi)} }_{\!\! \ell^2(\Nn)} 
\end{align}

Several expressions for $ q_s(\xi) = \Prob{Z^{\PoPl(\xi)} \leq s } $ are thus given by \eqref{Eq:SchurQl:InvNorm}, \eqref{Eq:SchurQl:1over} and \eqref{Eq:SchurQl:1minus}. To these ones, one can add the resolvant representation of Borodin \cite[(4.1), (4.3)]{BorodinDiscretePainleve} that reads
\begin{align}\label{Eq:SchurQl:BorodinPainleve}
q_s(\xi) =  \frac{1}{\Ree_s[s, s]}  , \qquad \Reb_s := \Keb_s(I - \Keb_s)\inv = I - (I - \Keb_s)\inv 
\end{align}

Using the fact that $ \vert\!\vert B_s^{(\xi)} \vert\!\vert^2_{ \ell^2(\Nn)} = 1 $, we see that this representation is equivalent to \eqref{Eq:SchurQl:1over}~; but \cite[(4.3)]{BorodinDiscretePainleve} gives another expression of $ \Ree_s[s, s] $ using the theory of discrete Riemann-Hilbert problems for discrete integrable operators constructed in \cite{BorodinDRHP}. In particular, it gives a discrete Painlev\'e equation analogous to the continuous one given in \S~\ref{SubSec:RMT:Painleve}. 

Such an expression can also be obtained by applying Cramer's rule for the inverse of a matrix and a block decomposition formula as in \cite[first proof \& (2)]{BasorWidom}. Notice that the function $ \He_N $ defined in \eqref{Def:Law:PsiSquare:GUE} is also defined by squared norms of Orthogonal Polynomials on the Real Line (OPRL) and that the $ q_s(\xi) $ are related with squared norms of OPUCs. The two theories are thus analogous and one can ask about similar convergence issues.


\begin{remark}\label{Rk:SeveralRandomisationStructures}
The analogue of \eqref{Def:Law:PsiSquare:phi} i.e.
\begin{align*}
\Prob{Z_\ell(\phi) \leq t} =  \exp\prth{ -\int_t^{+\infty} \norm{  \Hb(\phi_s)^\ell \phi_s  }_{ L^2(\Rr_+) }^2  ds }
\end{align*}
is obtained by writing
\begin{align*}
\det(I - \Keb_N)_{\ell^2(\Nn)} & = \exp\prth{  \sum_{\ell \geq 1} \frac{1}{\ell} \sum_{s \geq N} \trace\prth{  ( (I - \Keb_s )\inv(\Keb_{s + 1} - \Keb_s)  )^\ell  }_{\ell^2(\Nn)} } \\
                  & =  \prod_{\ell \geq 1} \exp\prth{    \frac{1}{\ell} \sum_{s \geq N} \trace\prth{  \big( (I - \Keb_s )\inv( - B_s^{(\xi)} \otimes (B_s^{(\xi)})^* \big)^\ell  }_{\ell^2(\Nn)} } \\
                  & = \prod_{\ell \geq 1} \exp\prth{  -  \frac{1}{\ell} \sum_{s \geq N} \bracket{    (I - \Keb_s )\inv B_s^{(\xi)} , B_s^{(\xi)}  }_{\ell^2(\Nn)}^\ell }  \\
                  & =: \prod_{\ell \geq 1} \Prob{ \Zb_{\! \ell}^{(\xi)} \leq N }
\end{align*}

This is thus a different edge randomisation structure~! The same can be said about the $ GU\!E_N $. Several such structures are thus possible.
\end{remark}

The relevant convergence here was originally proven by Baik-Deift-Johansson \cite{BaikDeiftJohansson} and later reproven by Johansson \cite{JohanssonPlancherel}, Okounkov \cite{OkounkovBDJconj} and Borodin-Okounkov-Olshanski \cite{BorodinOkounkovOlshanskiPlancherel}. It reads
\begin{align}\label{CvLaw:Plancherel:BDJ}
\frac{\lambdab_1^{\PoPl(\xi)} - 2\sqrt{\xi}}{\xi^{1/6}} \cvlaw{\xi}{+\infty} \TW_2
\end{align}

With $ \TW_2 $ given by \eqref{EqMax:TW2} and in the same vein as question~\ref{Q:CvGUE2TWwithMax}, one can ask the~:
\begin{question}\label{Q:CvBDJ2TWwithMax}
Can one prove the Baik-Deift-Johansson convergence \eqref{CvLaw:Plancherel:BDJ} in the same vein as in theorem~\ref{Thm:NewExprTW2}, using the convergence of a certain range $ k \equiv k(\xi) $ of the random variables
\begin{align*}
\frac{Z^{\PoPl(\xi)} - k(\xi) - 2\sqrt{\xi}}{\xi^{1/6}}  
\end{align*}

Same question for the $ \Zb_{\! k}^{(\xi)} $ of remark~\ref{Rk:SeveralRandomisationStructures}.
\end{question}

$ $

\medskip\medskip
\appendix
\section{Symmetric functions}\label{Annex:SymFunc}

\subsection{Generalities}\label{SubSec:AnnexSymFunc:General}
\subsubsection{Series}\label{SubSubSec:AnnexSymFunc:General:Series}

We denote by $ \Uu := \ensemble{z \in \Cc : \abs{z} = 1}$ the unit circle and set
\begin{align*}
d^*z := \frac{dz}{2i \pi}, \qquad \frac{d^*Z}{Z} := \prod_{k = 1}^n \frac{d^*z_k}{  z_k } \quad \text{ if } \quad Z := (z_1, \dots, z_n) 
\end{align*}

We will use the notation 
\begin{align*}
\crochet{z^n} f(z) = a_n \quad \mbox{if} \quad f(z) = \sum_n a_n z^n
\end{align*}
to denote the $ n $-th (Fourier) coefficient of a Laurent series $ f $. Up to a dilation, one can suppose that $f$ has a radius of convergence at least equal to 1, in which case
\begin{align}\label{Def:FourierCoeff}
\crochet{z^n} f(z) = \oint_\Uu f(z) z^{-n} \frac{d^*z }{z} 
\end{align}

\subsubsection{Multi-index notation}\label{SubSubSec:AnnexSymFunc:General:MultiIndex}

We use the multi-index notation 
\begin{align}\label{Def:MultiIndexNotation}
X^\alpha := \prod_{k \geq 1} x_k^{\alpha_k}
\end{align}
and subsequently the notation $ \crochet{X^\alpha} f(X) $ for the multivariate Fourier coefficient. If $ N \in \Zz $ (in particular for $ N = -1 $) and $ X = (x_1, \dots, x_k) $, we set
\begin{align*}
X^N := x_1^N x_2^N \dots  x_k^N, \qquad X\inv := x_1\inv x_2\inv \dots  x_k\inv
\end{align*}

\medskip
\subsubsection{Partitions}\label{SubSubSec:AnnexSymFunc:General:Partitions}

The partitions of an integer $n$ are weakly decreasing sequences $ \lambda := (\lambda_1 \geq \lambda_2 \geq \cdots \geq \lambda_m) $ such that $ \abs{\lambda} := \sum_{i = 1}^m \lambda_i = n $. We will write $ \lambda \vdash n $ for $ \abs{\lambda} = n $. The length $m$ of $ \lambda $ will be denoted by $ \ell(\lambda) $. The transpose of a partition will be denoted by $ \lambda' $. We refer to \cite[ch. I.1]{MacDo} for generalities concerning them.

\medskip
\subsubsection{Symmetric functions and plethysm}\label{SubSubSec:AnnexSymFunc:General:SymFunc}

For notations an definitions concerning symmetric functions, the reference is Macdonald \cite{MacDo}. We denote by $  s_\lambda $ the Schur functions, $ p_k $ the power functions, $ h_k $ the homogeneous complete symmetric functions and $ e_k $ the elementary symmetric functions. The ring of symmetric functions is denoted by $ \Lambdab := \Cc\crochet{p_k}_{k\geq 1} = \Cc\crochet{h_k}_{k\geq 0} = \Cc\crochet{e_k}_{k\geq 0} $.

The use of $ \lambda $-ring/plethystic notations and alphabets will be done in the same vein as \cite{LascouxSym, HaimanMacDo}, using in particular the convention of \cite[\S~2]{HaimanMacDo} that writes plethysm with a bracket $ \crochet{\cdot}$: the plethysm of a symmetric function $ f $ in an abstract alphabet $ \Ae $ will be denoted $ f[\Ae] $. In the $ \lambda $-ring vision of e.g. \cite{LascouxSym}, symmetric functions are polynomial functors in the vein of \cite[Appendix A p. 149]{MacDo}. As a result, the term functor will be sometimes used to designate some quantities.

\begin{remark}
We recommend the reading of \cite[\S~2]{HaimanMacDo} to become familiar with the notion and the use of plethysm in the manipulation of symmetric functions. 
\end{remark}


\medskip
\subsubsection{Plethystic sum and product}\label{SubSubSec:AnnexSymFunc:General:PethSumProd}

Recall that $ p_k(x_1, x_2, \dots) := \sum_{\ell \geq 1} x_\ell^k $. Given the sets of variables or ``alphabets'' $ X := \ensemble{ x_k }_{k \geq 1} $ and $ Y := \ensemble{ y_k }_{k \geq 1} $, one defines 
\begin{align*}
p_k\crochet{X + Y}  & =  p_k \crochet{X}  + p_k\crochet{Y}  \\
p_k\crochet{ X \cdot  Y } & =  p_k\crochet{X}  p_k\crochet{Y}  
\end{align*}

Note the difference of convention between $ p_k( X + Y ) = \sum_\ell (x_\ell + y_\ell)^k $ and $ p_k[X + Y] $.

With these definitions, $ X + Y $ is the alphabet obtained by concatenating $ X $ and $ Y $ and $ X \cdot Y = \ensemble{ x_k y_\ell }_{k, \ell \geq 1} $ is a ``tensor product'' alphabet. It is clear that $ + $ and $ \cdot $ are associative operations on the alphabets. From now on, we will forget the dot when considering a tensor alphabet and we will write $ X Y $ for it~; we will also forget the $ \ensemble{\cdot} $ for alphabets. For instance, we write $ t X $ for $ \ensemble{t} \cdot X $.

\medskip
\subsubsection{The $H$ functor}\label{SubSubSec:AnnexSymFunc:General:Hfunctor}

We define the generating series of the complete homogeneous symmetric functions by 
\begin{align}\label{Def:Hfunctor}
\boxed{H\crochet{X} := \sum_{n \geq 0} h_n(X) = \prod_{k \geq 1} \frac{1}{ 1 -  x_k }}
\end{align}

It clear that
\begin{align}\label{Eq:H:morphism}
H\crochet{X + Y} = H\crochet{X}H\crochet{Y}
\end{align}

The link betwen the $ h_k $'s and the $ p_k $'s can be summarised by the generating functions identity \cite[I-2, (2.10)]{MacDo}\footnote{We slightly change the convention of \cite[I-2, (2.10)]{MacDo} that uses instead $ P(tX) = \sum_{k \geq 1} t^k p_k\crochet{X} $.}
\begin{align}\label{Eq:H:H=expP}
H\crochet{X} = e^{P[X]}, \qquad P\crochet{X} := \sum_{k \geq 1} \frac{p_k\crochet{X}}{k}
\end{align}

With the usual convention $ p_\lambda := \prod_{k \geq 1} p_{\lambda_k} $, the expansion of $ e^P $ gives \cite[I (2.14)]{MacDo}
\begin{align}\label{Eq:H:H=expPinExtension}
H\crochet{X} = \sum_{\lambda } \frac{1}{ z_\lambda } p_\lambda\crochet{X} 
\end{align}

Replacing $ X $ by $ X Y $ yields
\begin{align*}
\sum_{\lambda } \frac{ 1 }{z_\lambda} p_\lambda\crochet{X} p_\lambda\crochet{Y} = \sum_{n \geq 0} h_n\crochet{XY}  = H\crochet{XY}
\end{align*}

\medskip
\subsubsection{Plethystic difference}\label{SubSubSec:AnnexSymFunc:General:PlethDiff}

The difference of two alphabets $ X - Y $ is defined as the formal alphabet such that \cite[1-3, ex. 23]{MacDo}
\begin{align*}
H\crochet{ X - Y} = \frac{ H\crochet{ X } }{ H\crochet{ Y } }
\end{align*}

This last definition is equivalent to \cite[I (2.10)]{MacDo}
\begin{align*}
p_k\crochet{ X - Y } = p_k \crochet{ X } - p_k \crochet{ Y }
\end{align*}

Note that $ f(-X) := f(-x_1, -x_2, \dots) \neq f\crochet{-X} $. To differentiate between these two operations, we define the ``minus sign'' alphabet
\begin{align}\label{Def:Alphabet:Epsilon}
\varepsilon := \ensemble{-1}
\end{align}
which is such that $ f\crochet{\varepsilon X} = f(-X) $.

\medskip
\subsubsection{Involution $ \omega $}\label{SubSubSec:AnnexSymFunc:General:OmegaInvolution}

The fundamental involution $ \omega $ \cite[I-2, (2.7)]{MacDo} is defined by $ \omega(h_n) = e_n $ and $ \omega(e_n) = h_n $ (i.e. $ \omega^2 = I $). Equivalently, it can be defined by $ \omega(s_\lambda) = s_{\lambda'} $ \cite[I-3, (3.8)]{MacDo}. Since $ s_\lambda\crochet{-X} = (-1)^{\abs{\lambda}} s_{\lambda'}\crochet{X} = s_{\lambda'}\crochet{\varepsilon X} $ (see \cite{HaimanMacDo} or \cite[I-3, (3.10)]{MacDo}), we thus have $ \omega(s_\lambda\crochet{X}) = s_\lambda\crochet{-\varepsilon X} $. As a result, we define the alphabet
\begin{align}\label{Eq:Schur:OmegaInvolution}
\boxed{\widehat{\omega} := -\varepsilon}
\end{align}
which is such that $ \omega(s_\lambda\crochet{X}) = s_\lambda\crochet{\widehat{\omega} X} $.

\medskip
\subsubsection{Cauchy identity}\label{SubSubSec:AnnexSymFunc:General:CauchySchur}

The Cauchy identity is \cite[I-4, (4.3)]{MacDo}
\begin{align}\label{Eq:Schur:CauchyIdentity}
H \crochet{ X  Y} = \sum_\lambda s_\lambda\crochet{X} s_\lambda\crochet{Y}
\end{align}

Acting plethystically on $X$ with $ \omega $ gives the \textit{dual} Cauchy identity
\begin{align}\label{Eq:Schur:DualCauchyIdentity}
H \crochet{\widehat{\omega}XY} = \sum_\lambda (-1)^{ \abs{ \lambda } } s_{\lambda'}\crochet{X} s_\lambda\crochet{Y}
\end{align}

Replacing $ X $ by $ tX $ and taking $ \crochet{t^n} $ in \eqref{Eq:Schur:CauchyIdentity} and \eqref{Eq:H:H=expPinExtension} gives
\begin{align}\label{Eq:Schur:PlethisticHnSchur}
h_n\crochet{ XY } = \sum_{\lambda \vdash n } \frac{1}{z_\lambda } p_\lambda\crochet{X} p_\lambda\crochet{Y} = \sum_{\lambda \vdash n } s_\lambda\crochet{ X }  s_\lambda\crochet{ Y }
\end{align}

\medskip
\subsubsection{Exponential alphabet}\label{SubSubSec:AnnexSymFunc:General:ExpAlphabet}

The alphabet $ \Eee $ is defined by setting
\begin{align}\label{Def:Alphabet:Exp}
p_k\crochet{\Eee } = \Unens{k = 1} \qquad\Longleftrightarrow\qquad H\crochet{t \Eee} = e^t 
\end{align}

Schur-Weyl duality gives \cite[I-7 (7.7) p. 114]{MacDo} 
\begin{align*}
s_\lambda = \sum_{\mu \vdash \abs{\lambda}} \chi^\lambda_\mu \frac{p_\mu }{z_\mu }
\end{align*}

For $ \lambda \vdash n $, one has $ p_\lambda\crochet{\Eee} = \prod_k p_{\lambda_k}\crochet{\Eee} = \Unens{ \lambda = 1^n} $, hence
\begin{align*}
s_\lambda\crochet{\Eee} = \sum_{\mu \vdash n} \chi^\lambda_\mu \frac{p_\mu\crochet{\Eee} }{z_\mu } = \frac{ \chi^\lambda_{1^n}  }{z_{1^n} } = \frac{\chi^\lambda(\id_{\Sg_n})}{n!}
\end{align*}

Since $ \chi^\lambda = \tr{\rho^\lambda} $ with $ \rho^\lambda : \Sg_n \rightarrow \Mat_{d_\lambda}(\Cc) $ (or $ \textrm{End}(V^\lambda) $ with $ V^\lambda \simeq \Cc^{d_\lambda} $), the dimension $ d_\lambda $ of the irreducible module $ V^\lambda $ is given by $ \chi^\lambda(\id_{\Sg_n}) = \tr{I_{d_\lambda}} $, hence \medskip
\begin{align}\label{Eq:DimWithSchur}
d_\lambda = \abs{\lambda}!\, s_\lambda\crochet{\Eee} 
\end{align}

\medskip
\subsubsection{The Schur functions}\label{SubSubSec:AnnexSymFunc:General:Schur}

The Schur functions have several equivalent definitions. One is for instance the Jacobi-Trudi identity \cite[I-3 (3.4)]{MacDo}
\begin{align}\label{Eq:Schur:JacobiTrudi}
s_\lambda = \det\prth{h_{\lambda_{i + \ell - j}}}_{1 \leq i, j \leq \ell}, \quad \forall \ell \geq \ell(\lambda)
\end{align}
which is equivalent, with the simple manipulations of e.g. \cite[\S~3.1]{BarhoumiCUErevisited} to the Fourier coefficient form
\begin{align}\label{Eq:Schur:Fourier}
s_\lambda\crochet{X} = \crochet{U^\lambda} H\crochet{ XU - U^{\varepsilon R} }
\end{align}
with $ U^{\varepsilon R} := \ensemble{ u_i/u_j, j < i} $

These functions satisfy the property of ``restricted'' orthogonality \cite{DiaconisShahshahaniCUE}/\cite[VI-9 rk. 2]{MacDo}
\begin{align}\label{Eq:Schur:RestrictedOrthogonality}
\int_{\Ue_n} s_\lambda(U) s_\mu(U\inv) dU = \delta_{\lambda, \mu}\Unens{\ell(\lambda) \leq n}
\end{align}
where $ dU $ is the normalised Haar measure of the unitary group $ \Ue_n $, whose diagonal part is given by the $ CU\!E_n $ measure $ \abs{\Delta(e^{i\thetab})}^2 \frac{d\thetab}{n!(2\pi)^n} $~; see \cite{DiaconisShahshahaniCUE}/\cite[\S~2.3]{BarhoumiCUErevisited}.

The Schur functions are orthogonal for the Hall scalar product $ \bracket{\cdot, \cdot} $ that can be defined by \cite[I-4]{MacDo}
\begin{align}\label{Eq:Schur:Orthogonality}
\bracket{s_\lambda, s_\mu } = \delta_{\lambda, \mu} 
\end{align}

\subsection{The Schur measure and the Schur kernel}\label{SubSec:AnnexSymFunc:Schur}

The Schur measure is defined in \eqref{Def:Schur:Measure}. The fact that it is a probability measure for a positive specialisation comes from the Cauchy identity \eqref{Eq:Schur:CauchyIdentity}. The law of the total parts of the measure is given by
\begin{align}\label{Eq:SchurMeas:LawTotaSum}
\Espr{\Schur(\Ae, \Be)}{t^{\abs{\lambdab}}} = H\crochet{(t - 1) \Ae\Be}
\end{align}

One has used the scaling property $ s_\lambda\crochet{tX} = t^\abs{\lambda} s_\lambda\crochet{X} $. In particular, for $ \Ae = (p_k)_k $ and $ \Be = (q_k)_k $, one gets 
\begin{align*}
\Espr{\Schur(\Ae, \Be)}{t^{\abs{\lambdab}}} = \prod_{i, j} H\crochet{(t - 1) p_i q_j} = \prod_{i, j} \frac{1 - p_i q_j}{1 - tp_i q_j} = \prod_{i, j} \Esp{ t^{\Geom(p_iq_j)} }
\end{align*}
hence the equality in law $ \abs{\lambdab^{\Schur(\Ae, \Be)} } = \sum_{i, j} \Geom(p_i q_j) $ with independent random variables in the RHS. Of course, since the only ingredients needed for such an equality in law are the scaling of the symmetric functions and the Cauchy identity, it is also valid for e.g. the Macdonald measure \cite{BorodinCorwinMacDo}, but with random variables different from the geometric ones.

The Schur kernel operator is 
\begin{align}\label{Eq:SchurMeas:SchurOperator}
\Keb_{\Schur(\Ae, \Be)} := \Heb(a_{\Ae, \Be})\Heb(\widetilde{a}_{\Ae, \Be}\inv)
\end{align}
where the fundamental function $ a \equiv a_{\Ae, \Be} $ is given by 
\begin{align}\label{Def:Function:SchurMeas:SchurKernelFunction}
a_{\Ae, \Be}(z) := H_\varepsilon\crochet{z\Ae - z\inv \Be} 
\end{align}
with $ H_\Ae\crochet{X} := H\crochet{\Ae X} $, $ \widetilde{f}(z) := f(z\inv) $ and $ f\inv := 1/f $ here.

\medskip

Supposing that $ a(z) \equiv a_{\Ae, \Be}(z) = \sum_{k \in \Zz} \widehat{a}_k z^k $, the Schur kernel can take the following forms~:
\begin{itemize}

\item Hankel convolutive form \cite{BasorWidom}~:
\begin{align}\label{Def:Kernel:HankelConv:Schur}
\Ke_{\Schur(\Ae, \Be)}[x, y] = \sum_{k \geq 1} \widehat{a}_k(x) \widehat{a}_k(y), \qquad \widehat{a}_k(x) := \widehat{a}_{k + x} 
\end{align}

\medskip
\item Integral form \cite[(2.1), (2.2)]{BorodinOkounkov}~:
\begin{align}\label{Def:Kernel:Integral:Schur}
\Ke_{\Schur(\Ae, \Be)}[x, y]  = \crochet{z^x w^y} \frac{a(z)/a(w)}{z/w - 1} \Unens{\abs{z/w} > 1}
\end{align}

\medskip
\item Christoffel-Darboux form : see \cite[rk. 2]{BorodinOkounkov} for the expression.
%
%
%

\end{itemize}

\medskip
\section{Prolate hyperspheroidal wave functions}\label{Annex:PSWF} 

The main reference for this \S~is \cite{SlepianIV}. We also refer to \cite{BeskrovnyKolobov, FriedenPSWF, Shkolnisky}. We nevertheless use the terminology of \cite{HeurtleyI, HeurtleyII}, i.e. ``hyperspheroidal'' instead of ``generalised'' as there are several possible generalisations of the prolate spheroidal wave functions, listed for instance in \cite[\S~30.12 p. 704]{NISThandbook} or \cite[(3.2)]{BeskrovnyKolobov}. See also \cite{CasperGrunbaumYakimovZurrianAlgAiryComm, GrunbaumVasquezZubelli} for interesting developments of generalised prolate spheroidal wave functions in relation with the Airy kernel operator.

\medskip
\subsection{Definitions}\label{SubSec:AnnexPSWF:Definitions} 

We define the following \textit{prolate hyperspheroidal wave function} operator
\begin{align}\label{Def:Operators:PHSWF}
\PHO_{N, c} := \frac{d}{dx}(1 - x^2)\frac{d}{dx} + c^2(1 - x^2) - \frac{\rho^2}{x^2}, \qquad \rho^2 \equiv \rho_N^2 := N^2 - \frac{1}{4}
\end{align}

This operator commutes in $ L^2([0, 1]) $ with the operator $ \Jb_{\!\! N, c} : L^2([0, 1]) \to L^2([0, 1]) $ with kernel
\begin{align}\label{Def:Kernel:JN}
\Je_{\! N, c}(x, y) := \sqrt{cxy} \, J_N( cxy ), \qquad J_m(x) := \crochet{z^m}e^{x (z - z\inv)/2}
\end{align}
whose origin in optics comes, amongst others, from \cite{HeurtleyI, HeurtleyII, SlepianIV} 
\begin{quote}
``the modes in a maser interferometer with confocal spherical mirrors of circular cross section''.
\end{quote}

The common eigenvectors of $ \PHO_{N, c} $ and $ \Jb_{\!\! N, c} $ are called \textit{prolate hyperspheroidal wave functions} and are denoted by $ \psi_k^{(N, c)} $. 

\medskip

The operator $ \PHO_{1/2, c} $ is the more classical \textit{prolate spheroidal wave function} operator (up to an additive constant) that commutes with the \textit{sine kernel operator} $ \Kb_{\!\sinc} $ with kernel $ (x, y) \mapsto \sinc(\pi(x - y)) $ in $ L^2([-1, 1]) $ \cite[(23), (26)]{SlepianPollak}~; see Mehta \cite[ch. 6 \& 18]{MethaBook} for a use of its eigenvectors in Random Matrix Theory and \cite[ch.~30 p. 697]{NISThandbook} for properties of these functions. Note that the Bessel function satisfies $ J_{1/2}(x) = \sqrt{\frac{2}{\pi x}} \sin(x) $, hence $ \PHO_{1/2, c} $ also commutes with the operator of kernel $ (x, y) \mapsto \sin(cxy) $ on $ L^2([0, 1]) $ which is equal, up to a symmetrisation that does not change the (real) eigenvectors and eigenvalues, to the square root of $ \Kb_{\!\sinc} $ that has kernel $ e^{icxy} $, see e.g. \cite[(6.3.17)]{MethaBook}.

\medskip

The operator $ \PHO_{N, 0} $ is a particular specialisation of an intertwinning of the Jacobi differential operator $ \Le_{\alpha, \beta} := \frac{d}{dx}(1 - x^2) \frac{d}{dx} + (\beta - \alpha - x( \alpha + \beta )) \frac{d}{dx} $ whose eigenfunctions are the Jacobi polynomials. More precisely, if $ P_k^{(\alpha, \beta)} $ are the Jacobi polynomials \cite[ch. IV]{Szego}, defining the odd Zernike polynomials by \cite[(16)]{Shkolnisky}
\begin{align}\label{Def:Function:ZernikePols}
T_k^{(N)}(x) := \tau_k^{(N)} x^{N + \frac{1}{2}} P_k^{(N, 0)}(1 - 2x^2), \qquad \tau_k^{(N)} := \sqrt{2(2k + N + 1)}
\end{align}
one has
\begin{align*}
\PHO_{N, 0}  T_k^{(N)} = - \kappa_k^{(N)} T_k^{(N)}, \qquad \kappa_k^{(N)} := (N + 2k + \tfrac{1}{2})(N + 2k + \tfrac{3}{2})
\end{align*}

The tri-band relation of orthogonal polynomials implies that the (semi-infinite) matrix of the endomorphism of multiplication by $ X^2 $ in the basis of the $ T_k^{(N)} $ is tri-diagonal. Since the matrix of $ \PHO_{N, 0} $ in this basis is diagonal and $ \PHO_{N, c} = \PHO_{N, 0} + c^2(1 - X^2) $, one concludes that the matrix of $ \PHO_{N, c} $ is tri-diagonal in the $ (T_k^{(N)})_{k \geq 0} $ basis (with explicit coefficients coming from the Jacobi polynomials). As a result, expanding the eigenvectors $ (\psi_\ell^{(N, c)})_{\ell \geq 1} $ in this basis according to 
\begin{align*}
\psi_\ell^{(N, c)} = \sum_{k \geq 0} d_{\ell, k}^{\, (N, c)} T_k^{(N)},
\qquad
\PHO_{N, c}  \psi_\ell^{(N, c)} = \lambda_\ell^{(N, c)} \psi_\ell^{(N, c)}
\end{align*}
one obtains the semi-infinite matrix equation \cite[(30)]{Shkolnisky}
\begin{align*}
\Mb^{(N, c)}d_{\ell, \cdot}^{\, (N, c)} = \lambda_\ell^{(N, c)} d_{ \ell, \cdot}^{\, (N, c)}, \qquad \Mb^{(N, c)} := \mathrm{Mat}_{(T_k^{(N)})_k }(\PHO_{N, c})
\end{align*}

In the same vein as for the prolate spheroidal wave functions, one can expand $ \psi_\ell^{(N, c)} $ and $\lambda_\ell^{(N, c)} $ into powers of $c$~; see e.g. \cite{SlepianIV}.

\medskip
\subsection{Rescaling}\label{SubSec:AnnexPSWF:Rescaling} 

$ $

\begin{shaded}
\begin{lemma}[Limiting hyperspheroidal operators]\label{Lemma:Rescaling}
Set
\begin{align*}
x := 1 - \frac{X}{2}N^{-2/3}, \qquad 
y := 1 - \frac{Y}{2}N^{-2/3}, \qquad 
c := 2N - s N^{1/3}
\end{align*}

Then, locally uniformly in $ \Rr_+ $
\begin{align*}
\frac{N^{1/6}}{\sqrt{2}} \Je_{2N, 2N - sN^{1/3}}(1 - XN^{1/3}, 1 - YN^{1/3}) 
\tendvers{N}{+\infty} \Ai(X + Y + s)
\end{align*}
and, on test functions that are $ \Ce^2 $ with exponential decay at infinity (say)
\begin{align*}
\frac{N^{-2/3}}{4} \PHO_{N, N - sN^{1/3}/2} \tendvers{N}{+\infty} \Le_{TW, s} := \frac{d}{dX}X\frac{d}{dX} - X(X + s)
\end{align*}

As a result, the eigenvalues $ \Lambda_k^{(N, N - sN^{1/3}/2)} $ of $ \Jb_{\! N, N - sN^{1/3}/2} $ satisfy
\begin{align*}
N^{-1/6} \Lambda_k^{(N, N - sN^{1/3}/2)} \tendvers{N}{+\infty} \lambda_k(s) 
\end{align*}
where $ \lambda_k(s) $ is the $k$-th eigenvalue of $ \Hb(\Ai_s) $ acting on $ L^2(\Rr_+) $.
\end{lemma}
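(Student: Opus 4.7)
The plan is to establish the three convergences separately, using as the common technical ingredient the uniform asymptotic expansion of the Bessel function of large order at the transition point:
$$J_\nu\!\left(\nu + \nu^{1/3}\,z\right) = \left(\tfrac{2}{\nu}\right)^{\! 1/3}\! \Ai\!\left(-2^{1/3} z\right) + O(\nu^{-1}),$$
uniformly on compact sets in $z$ (see e.g.~\cite[10.19.8 p.~228]{NISThandbook}). All three parts follow from matching this transitional regime to the Tracy-Widom rescaling $x = 1 - \tfrac{X}{2} N^{-2/3}$, $c = 2N - s N^{1/3}$.

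For the kernel convergence (first assertion), I would Taylor-expand the product $cxy$ under the rescaling. A direct computation gives $cxy = 2N - (X+Y+s)\, N^{1/3} + O(N^{-1/3})$, so writing $cxy = \nu + \nu^{1/3} z$ with $\nu = 2N$ identifies $z = -(X+Y+s)/2^{1/3} + o(1)$. Substituting into the Bessel asymptotic and using $\sqrt{cxy} = \sqrt{2N}(1+o(1))$ yields the claimed Airy limit, with local uniformity in $(X,Y)$ inherited from the locally uniform Bessel asymptotic.

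For the operator convergence (second assertion), I would carry out the change of variable $x \mapsto X$ inside $\PHO_{N,c}$ and Taylor-expand. The kinetic part transforms via $\frac{d}{dx} = -2 N^{2/3} \frac{d}{dX}$ and $1 - x^2 = X N^{-2/3} + O(N^{-4/3})$ into $4 N^{2/3} \frac{d}{dX} X \frac{d}{dX}$ to leading order. The crucial step is the expansion of the potential $c^2(1-x^2) - \rho_N^2/x^2$: the cancellation of the $N^{4/3}$ contributions between $c^2(1-x^2)$ and $\rho_N^2/x^2$ is forced by the matched choice of $c$ and $\rho_N = \sqrt{N^2 - 1/4}$, and leaves $-X(X+s)\, N^{2/3}$ at the next order (modulo a divergent additive constant of order $N^2$ which acts as a scalar shift). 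After multiplication by $N^{-2/3}/4$ and absorption of the constant into the spectral parameter, one obtains $\Le_{TW,s}$; the convergence on test functions with exponential decay follows by dominated convergence once the error terms in the expansion are controlled uniformly on compacts.

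For the eigenvalue convergence (third assertion), I would pass through a unitary change of variable turning $\Jb_{N,c}$ acting on $L^2([0,1],dx)$ into the operator on $L^2([0, 2N^{2/3}], dX)$ with kernel $\tfrac{N^{-2/3}}{2}\, \Je_{N,c}(\phi(X),\phi(Y))$, where $\phi(X) = 1 - XN^{-2/3}/2$. By the first part, an appropriate rescaling of this kernel converges locally uniformly to $\Ai(X+Y+s)$, i.e.~to the kernel of $\Hb(\Ai_s)$ on $L^2(\Rr_+)$. To upgrade pointwise convergence to spectral convergence for every fixed $k$, I would establish Hilbert-Schmidt convergence by combining the local uniform convergence with tail estimates: the exponential decay $\Ai(t) = O(e^{-\frac{2}{3} t^{3/2}})$ at $+\infty$ together with the sub-transitional exponential-decay asymptotic of $J_\nu(\nu \mathrm{sech}\,\alpha)$ beyond the turning point provides a uniformly integrable square-integrable majorant for the kernels. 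Hilbert-Schmidt convergence of selfadjoint operators implies convergence of each eigenvalue, giving the claim.

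The main obstacle is the tail control in the last step: the locally uniform Bessel-to-Airy asymptotic only controls a compact region, and one must separately dominate the contribution of the kernel on the region $X \gtrsim N^{2/3}$ near the boundary $x = 0$ (where $\rho_N^2/x^2$ blows up) and past the turning point to upgrade the convergence to a Hilbert-Schmidt sense strong enough to transfer the spectrum. A secondary, mostly bookkeeping, issue is to track the scaling factors consistently between the Bessel index ($\nu = 2N$ in the natural kernel expansion) and the index $N$ appearing in $\PHO_{N,c}$ — both conventions produce the same Tracy-Widom limit object but differ by explicit numerical factors which must be absorbed into the eigenvalue normalisation.
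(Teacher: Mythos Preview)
Your approach is essentially the same as the paper's: the paper also invokes the transitional Bessel asymptotic \cite[(10.19.8)]{NISThandbook} for the kernel limit, and performs the same change of variable $dx = -\tfrac{N^{-2/3}}{2}\,dX$ to reduce the differential operator, expanding the kinetic and potential parts separately and noting that the $N^{4/3}$-terms in $c^2(1-x^2)$ and $\rho^2/x^2$ compensate to leave $X(X+s)$ at order $N^{2/3}$.

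Your write-up is in fact more careful than the paper's sketch on two points. First, you flag the divergent additive constant of order $N^2$ coming from the zeroth-order term of $\rho^2/x^2$; the paper's proof does not mention it (and contains what appears to be a typo, writing $\rho^2/x$), but you are right that it is there and must be absorbed as a spectral shift --- harmless for the eigenfunctions, which is all that is ultimately used. Second, the paper gives no argument at all for the eigenvalue convergence, simply stating it ``as a result''; your plan to pass through a unitary change of variable and upgrade the local-uniform kernel convergence to Hilbert--Schmidt convergence via tail bounds (Airy decay on the limit side, sub-transitional Bessel decay on the finite-$N$ side) is the natural way to make this rigorous, and the obstacle you name --- controlling the kernel near $x=0$, i.e.\ $X\sim N^{2/3}$ --- is genuine and not addressed in the paper.
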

\end{shaded}

\begin{remark}
If one defines
\begin{align*}
\widetilde{\Je}_{N, c}(X, Y) & := \Je_{N, c}(e^{-X}, e^{-Y})e^{-(X + Y)/2}, \\ 
\widetilde{J}_{N, c}(X) & := \sqrt{c} \, e^{-X} J_N(c\, e^{-X}), \\
\Psi_\ell^{(N, c)}(X) & := \psi_\ell^{(N, c)}(e^{-X})e^{-X/2}
\end{align*}
and the associated operator $ \widetilde{\Jb}_{\! N, c} $ acting on $ L^2(\Rr_+) $ with kernel $ \widetilde{\Je}_{N, c} $, then, one has
\begin{align*}
\widetilde{\Jb}_{\!N, c} & = \Hb( \widetilde{J}_{N, c} ) \ActsOn L^2(\Rr_+) \\
\widetilde{\Jb}_{\!N, c}\Psi_\ell^{(N, c)} & = \Lambda_\ell^{(N, c)}\Psi_\ell^{(N, c)}
\end{align*}
and lemma~\ref{Lemma:Rescaling} can be rephrased into the weak convergence of operators of the Hankel type in $ L^2(\Rr_+) $ given by the convergence of their symbol. This convergence can be strengthen into a strong convergence depending on the functional space in which belongs the symbol, see e.g. \cite[ch. 5.4]{NikolskiToeplitz} or \cite[ch. 4.2]{BoettcherSilbermann}. 

The operator $ \Hb( \widetilde{J}_{N, c} ) $ is thus, in a certain sense, more natural than the original Heurtley-Slepian operator $ \Jb_{\! N, c} $ (since one remains in the framework of Hankel operators on $ \Rr_+ $). The eigenvectors of this operator are the $ \psi_\ell^{(N, c)}(e^{-x})e^{-x/2} $, which is somewhat similar to the exponential change of variable of Whittaker functions \cite[\S~4]{BorodinCorwinMacDo}/\cite{GLOGaussGivental, GLONewRepr}. The commuting differential operator to $ \widetilde{\Jb}_{\!N, c} $ on $ L^2(\Rr_+) $ is easily proven to be
\begin{align*}
\widetilde{\PHO}_{N, c} := \frac{d}{dx} \prth{e^{2x} - 1} \frac{d}{dx} + 2\prth{\rho^2 - 1} e^{2x} + c^2  e^{-2x}, \qquad \rho^2 := N^2 - \tfrac{1}{4}
\end{align*} 
\end{remark}

\medskip

\begin{proof}
Using the limit \cite[(4.39)]{JohanssonHouches}/\cite[(10.19.8) p. 232]{NISThandbook} 
\begin{align}\label{Eq:LimBesselToAiry}
\nu^{1/3} J_{2\nu }(2\nu - X \nu^{1/3}) \tendvers{\nu}{+\infty} \Ai(X) 
\end{align}
one easily proves the first claim.

The second claim comes from the fact that $ dx = -\frac{N^{-2/3}}{2} dX $ which implies
\begin{align*}
\frac{N^{-2/3}}{4}\frac{d}{dx} (1 - x^2) \frac{d}{dx} & = \frac{d}{dX}\prth{ X - \frac{X^2 N^{-2/3}}{4}} \frac{d}{dX},
\end{align*}
and from the expansion at the second order of 
\begin{align*}
\frac{\rho^2}{x} & = \frac{N^2 - \frac{1}{4}}{ 1 - XN^{-2/3}/2 } = \prth{ N^2 - \frac{1}{4}}\prth{ 1 + \frac{XN^{-2/3}}{2} + \frac{X^2 N^{-4/3}}{4} + O(X^3 N^{-2})}
\end{align*}
that gets compensated from the term
\begin{align*}
c_N^2( 1 - x^2) = \prth{N - \frac{s N^{-1/3} }{2}} \prth{  XN^{-2/3} - \frac{X^2 N^{-4/3}}{4} }
\end{align*}
to produce the term $ X(X + s) $ after multiplication by $ \frac{N^{-2/3}}{4} $. Full details are left to the reader.
\end{proof}

\medskip
\section*{Acknowledgements}

The author thanks P. Biane, G. Borot, B. Eynard, P. Ferrari, J. Heiny, O. H\'enard, K. Johansson, A. N. Kirillov, A. Kuijlaars, P. Maillard, S. N. Majumdar, J. Najnudel, M. Noumi, T. Popov, D. Rosen, G. Schehr, N. Simm, C. A. Tracy, P. Tran, R. Tribe, B. Westbury, O. Zaboronski and N. Zygouras for interesting discussions, remarks and insights concerning various stages of development of this work. 

A particular thanks is given to P. J. Forrester for the reference \cite{WuXuZhao}.

\medskip
\section*{Dedicatory}

This work is dedicated to the memory of my father Ahmed Mohsen Barhoumi (10/02/1949-24/10/2023).


\bibliographystyle{amsplain}

$ $


\end{document}